\documentclass[11pt]{amsart}
\usepackage{geometry}  % See geometry.pdf to learn the layout options. There are lots.
\geometry{letterpaper}   % ... or a4paper or a5paper or ... 
\usepackage{graphicx}
\usepackage{amssymb,amsmath}
\usepackage{epstopdf}
\usepackage[labelformat=empty]{subfig}
\usepackage[pdftex]{color}
\usepackage{xcolor}
\usepackage{bm}
\usepackage{tikz}
\usetikzlibrary{matrix}
\usepackage{multicol}
\usepackage{array}% http://ctan.org/pkg/array
\allowdisplaybreaks
\DeclareGraphicsRule{.tif}{png}{.png}{`convert #1 `dirname #1`/`basename #1 .tif`.png}

\DeclareMathOperator{\sign}{sign}
\DeclareMathOperator{\sech}{sech}

% \title{Completing the semitoric 
\title{Symplectic classification of coupled angular momenta}
\author{Jaume Alonso \qquad Holger R.\ Dullin \qquad Sonja Hohloch}
%\date{}      % Activate to display a given date or no date

\numberwithin{equation}{section}
\newtheorem{de}{Definition}[section]
\newtheorem{theo}[de]{Theorem}
\newtheorem{pro}[de]{Proposition}
\newtheorem{lemm}[de]{Lemma}
\newtheorem{co}[de]{Corollary}
\newtheorem{theol}{Theorem}

\newtheorem{re}[de]{Remark}

\usepackage[parfill]{parskip}

\newcommand{\dee}{\mathrm{d}}
\newcommand{\omc}{\omega_\text{can}}

%%%%%%%%%%%%%%%%%%%%%%%%%%%%%%%%%%%%%%%%%%%%%%%%%%%%
%%%%%%%%%%%%%% extra commands defined by sonja

\newcommand{\vungoc}{V\~u Ng\d{o}c }
\newcommand{\nff}{{n_{\text{FF}}}}
\newcommand{\ra}{{r_A}}
\newcommand{\rb}{{r_B}}
\newcommand{\rg}{{r_C}}

		% Name: R_1 = R one
		% Name: R_2 = R two
\newcommand{\rep}{\text{Re}}
\newcommand{\imp}{\text{Im}}

 % Sonja's color
\def\ja#1{{\color{red}{#1}}} % Jaume's color
 % Holger's color

%%%%%%%%%%%%%%%%%%%%%%%%%%%%%%%%%%%%
% abbreviations for greek, mathcal, mathfrak etc. letters added by Sonja
%%%%%%%%%%%%%%%%%%%%%%%%%%%%%%%%%%
%%%%%%%%%%%%%%%%%%%%%%%%%%%%%%%%

%%%%%%%%%%%%%%%%%%%%%%%%
%%%%%%%%%%%%%%%%%%%%%%%% \mathbb letters for ``natural numbers'' etc.

\newcommand{\C}{\mathbb{C}}

\newcommand{\N}{\mathbb{N}}
 % renewcommand necessary since \P was already occupied 

\newcommand{\R}{\mathbb{R}}
\newcommand{\mbS}{\mathbb{S}}
\newcommand{\T}{\mathbb{T}} % torus
\newcommand{\Z}{\mathbb{Z}}

%%%%%%%%%%%%%%%%%%%%%%%%%%%
%%%%%%%%%%%%%%%%%%%%%%%%%% greek alphabet

% greek alphabet

%\def\epsilon{\varepsilon}
%\def\phi{\varphi}

\newcommand{\al}{\alpha}
\newcommand{\be}{\beta}
\newcommand{\ga}{\gamma}

\newcommand{\ze}{\zeta}

\newcommand{\vt}{\vartheta}
\newcommand{\ka}{\kappa}
\newcommand{\lam}{\lambda}

\newcommand{\si}{\sigma}

\newcommand{\om}{\omega}

%% greek capital letters

\newcommand{\Ga}{\Gamma}
\newcommand{\De}{\Delta}

\newcommand{\Lam}{\Lambda}

% greek font dot small letter

% greek font dot capital letter

% greek font tilde dot small letter 

% greek font tilde small letter

%greek font tilde capital letter

% greek bar small letter

% greek font bar capital letter

% greek font hat small letter

%greek font hat capital letter

%%%%%%%%%%%%%%%%%%%%%%%%
%%%%%%%%%%%%%%%%%%%%%%%% latin alphabet

% latin tilde small letter

\newcommand{\pti}{{\tilde{p}}}
\newcommand{\qti}{{\tilde{q}}}

% latin tilde capital letter

%latin bar small letter

%\newcommand{\hbar}{\bar{h}} % ist durch Planck-Konstante belegt

% latin bar capital letter

% latin dot small letter

 % \cdot occupied
 % \ddot occupied

%% latin hat small letter

% latin hat capital letter

%%%%%%%%%%%%%%%%%%%%%% 
%%%%%%%%%%%%%%%%%%%%%% \mathcal

% mathcal capital letters

\newcommand{\mcA}{\mathcal A}
\newcommand{\mcB}{\mathcal B}

\newcommand{\mcG}{\mathcal G}
\newcommand{\mcH}{\mathcal H}
\newcommand{\mcI}{\mathcal I}
\newcommand{\mcJ}{\mathcal J}

\newcommand{\mcL}{\mathcal L}

\newcommand{\mcN}{\mathcal N}
\newcommand{\mcO}{\mathcal O}

\newcommand{\mcS}{\mathcal S} 
\newcommand{\mcT}{\mathcal T}

\newcommand{\mcV}{\mathcal V}
\newcommand{\mcW}{\mathcal W}
\newcommand{\mcX}{\mathcal X}

% mathcal tilde

% mathcal hat

%%%%%%%%%%%%%%%%%%%%%%
%%%%%%%%%%%%%%%%%%%%%% \mathfrak

% mathfrak capital letters

\newcommand{\mfI}{\mathfrak I}

%% mathfrak small letters

\newcommand{\mfh}{\mathfrak h}

\newcommand{\mfp}{\mathfrak p}

% mathfrak with tilde

%%%%%%%%%%%%%%%%%%%%
%%%%%%%%%%%%%%%%%%%% \mathscr

%%% \mathscr capital letters

%%%%%%%%%%%%
%%%%%%%%%%%% \mathsf

%%%%%%%%% some \mathsf letters: small letters

%%%%%%%%% some \mathsf letters: capital letters

%%%%%%%%%%%%%%%%%%%%%%%%%%%%%%%%%%%%%%%%%%%%%%%
%%%%%%%%%%%%%%%%%%%%%%%%%%%%%%%%%%%%%%%%%%%%%%%%%%%%%

\begin{document}

\begin{abstract}
The coupled angular momenta are a family of completely integrable systems that depend on three parameters and have a compact phase space. They correspond to the classical version of the coupling of two quantum angular momenta and they constitute one of the fundamental examples of so-called semitoric systems. Pelayo \& \vungoc have given a classification of semitoric systems in terms of five symplectic invariants. Three of these invariants have already been partially calculated in the literature for a certain parameter range, together with the linear terms of the so-called Taylor series invariant for a fixed choice of parameter values.

In the present paper we complete the classification by calculating the polygon invariant, the height invariant, the twisting-index invariant, and the higher-order terms of the Taylor series invariant for the whole family of systems. We also analyse the explicit dependence of the coefficients of the Taylor series with respect to the three parameters of the system, in particular near the Hopf bifurcation where the focus-focus point becomes degenerate.
\end{abstract}

\maketitle

\section{Introduction}
\label{sec:intro}
Within the modern theory of dynamical systems, completely integrable systems have a predominant position since they often appear in classical mechanics and because their many conserved quantities make them easier to understand and analyse. From a symplectic point of view, Arnold-Liouville's theorem \cite{Ar} and the normal forms by Eliasson \cite{El1,El2} and Miranda \& Zung \cite{MZ} provide a useful local description of the behaviour of completely integrable systems. Unfortunately, a global symplectic classification of completely integrable systems in general seems out of reach at the moment. 

Results are however possible if we add some restrictions. This is the case of toric systems, i.e.\ completely integrable systems for which the Hamiltonian flows of all conserved quantites are periodic. These systems were classified by Delzant \cite{Del} using a certain type of polygons. Furthermore, in  recent years Pelayo \& \vungoc \cite{PV1,PV4} have provided a classification of so-called semitoric systems, a class of four-dimensional integrable systems with a global $\mbS^1$-action admitting only non-degenerate non-hyperbolic singularities, together with some additional restrictions. This classification is achieved by five symplectic invariants (see below) in such a way that two systems are equivalent if and only if their invariants agree. Moreover, given an admissible list of invariants, the corresponding semitoric system can be constructed. The computation of these invariants, however, is far from trivial.

\subsection*{Semitoric systems}

Let $(M,\om)$ denote a 4-dimensional connected symplectic manifold throughout the paper. A semitoric system is a 4-dimensional completely integrable system $(M,\om,F=(L,H))$ with 2 degrees of freedom such that all singularities are non-degenerate, they have no hyperbolic components, the map $L$ is \emph{proper} (i.e.\ the preimage of a compact set by $L$ is again compact) and it induces a faithful Hamiltonian $\mbS^1$-action on $M$. In particular, the flow of $L$ is $2\pi$-periodic.

The theory of semitoric systems has recently become an active field of research, see Pelayo $\&$ \vungoc\!\! \cite{PV2} and Sepe $\&$ \vungoc\!\! \cite{SV} for an overview. The symplectic classification of semitoric systems is achieved in terms of the following five symplectic invariants:
\begin{itemize}
 	\item The \emph{number of focus-focus singularities}, denoted by $\nff$.
 	\item The \emph{Taylor series invariant}, a collection of $\nff$ formal Taylor series in two variables describing the foliation around each focus-focus singular fibre, cf.\ \S \ref{sec:taylor} for more details.
 	\item The \emph{polygon invariant}, an equivalence class of labelled collections of rational convex poligons and vertical lines crossing them, cf.\ \S \ref{sec:polygon}.
	\item The \emph{height invariant}, $\nff$ numbers corresponding to the height of the focus-focus critical values in the rational convex polygons of the polygon invariant, 	cf.\ \S \ref{sec:height}.
	\item The \emph{twisting index invariant}, $\nff$ integers associated to each of the collections of the polygon invariant, measuring the twisting of the system when moving from one singularity to another, cf.\ \S \ref{sec:twistingIndex}.
\end{itemize}

Let $(M_1,\om_1,(L_1,H_1))$ and $(M_2,\om_2,(L_2,H_2))$ be two semitoric systems. We say that they are \emph{isomorphic} as semitoric systems if there exists a symplectomorphism $\psi:M_1 \to M_2$ and a smooth map $g : \R^2 \to \R$ such that $\psi^*(L_2,H_2) = (L_1,g(L_1,H_1))$ and $\frac{\partial g}{\partial H_1}>0$. The classification result for semitoric system states that two semitoric systems are isomorphic if and only if they have the same list of symplectic invariants. Furthermore, given any `admissible' list of invariants, a semitoric system with such invariants can be constructed. 

One of the best understood examples is the coupled spin-oscillator, cf.\ Pelayo $\&$ \vungoc\!\! \cite{PV3}, a special case of the Jaynes-Cunning model (see Babelon $\&$ Cantini $\&$ Dou\c{c}ot \cite{BCD}). The authors of the present paper have recently completed the symplectic classification of this system in \cite{ADH}. Another important example are coupled angular momenta, the classical counterpart of the coupling of two quantum angular momenta as studied by Sadovksii \& Zhilinskii 
\cite{SZ}. Le Floch \& Pelayo have computed some of the symplectic invariants in \cite{LFP} for a particular family parameter, and they also recover the invariants from the joint quantum spectrum. Recently, a compact semitoric system with two focus-focus points has been described by the third author together with Palmer \cite{HP}. More details about semitoric systems and their classification are given in \S \ref{sec:seminf}.

\subsection*{Coupled angular momenta} 
Semitoric systems appear naturally in physics. One of the first problems that motivated the study of semitoric systems was the redistribution of certain energy levels in the coupling of two quantum angular momenta observed by Sadovksii \& Zhilinskii in \cite{SZ}. The classical counterpart of such system % , often called \emph{coupled quantum momenta}, 
constitutes one of the simplest examples of a compact semitoric system.

Consider $M=\mbS^2 \times \mbS^2$ and endow it with the symplectic form $\om = -(R_1 \om_{\mbS^2} \oplus R_2 \om_{\mbS^2})$, where $\om_{\mbS^2}$ is the standard symplectic form of $\mbS^2$ and $R_1,R_2$ are positive constants with $R_1 < R_2$. 

Let $(x_i,y_i,z_i)$ be Cartesian coordinates on the unit sphere $x_i^2 + y_i^2 + z_i^2 = 1$, $i=1,2$ and consider a parameter $t \in \R$. The coupled angular momenta is  the 4-dimensional completely integrable system with family parameter $t$ defined by 
\begin{equation}
\begin{cases}
L(x_1,y_1,z_1,x_2,y_2,z_2)\;:= R_1(z_1-1) + R_2 (z_2+1)\\
H(x_1,y_1,z_1,x_2,y_2,z_2):= (1-t) z_1 + t (x_1x_2 + y_1y_2 + z_1z_2) +2t-1.
\label{sysin}
\end{cases}
\end{equation} 

The map $L(x_1,y_1,z_1,x_2,y_2,z_2)$ corresponds to the momentum map of a simultaneous rotation on the two spheres and induces a global $\mbS^1$-action. The system has 4 fixed points, out of which three are always of elliptic-elliptic type and the other is of focus-focus type for a certain interval of values of $t$ and of elliptic-elliptic type otherwise. 
If the parameters in the symplectic form satisfy $R_1> R_2$ we will call the system {\em reverse} coupled angular momenta.

\subsection*{Main results}
The coupled angular momenta are a 4-dimensional semitoric system and can thus be classified using five symplectic invariants. Le Floch \& Pelayo computed in \cite{LFP} the number of focus-focus points invariant, the polygon invariant, the height invariant and the linear terms of the Taylor series invariant for $t = \frac{1}{2}$. The Taylor series invariant describes the singular foliation around focus-focus singularities and is particularly difficult to compute explicitly. The first main result of the present paper is the following:
\begin{theol}
The Taylor series symplectic invariant of the coupled angular momenta is given by
\begin{align*}
S(l,j) =& \, l \arctan \left( \dfrac{{R_2}^2 (-1 + 2 t) - R_1 R_2 (1 + t) + {R_1}^2 t}{(R_1 - R_2)R_1 \ra} \right)+j \ln \left( \frac{4 {R_1}^{5/2}\ra^3}{{R_2}^{3/2}(1-t)t^2} \right) \\ \nonumber
+& \dfrac{l^2}{16 {R_1}^4R_2 \ra^3} \left( - {R_2}^4 (-1 + 2 t)^3 +R_1 {R_2}^3 (1 - 17 t + 46 t^2 - 32 t^3)\right. \\ \nonumber &\hspace{2.2cm} \left. -3 {R_1}^2 {R_2}^2 t (1 - 7 t + 4 t^2)   
 +{R_1}^3 R_2 (3 - 5 t) t^2- {R_1}^4 t^3   \right) \\ \nonumber
+& \dfrac{lj}{8 {R_1}^3 R_2\ra^2} \left( (R_2 - R_1)({R_2}^2 (1 - 2 t)^2   + 2 R_1 R_2 t (-1 + 6 t)+{R_1}^2 t^2) \right)\\ \nonumber
+& \dfrac{j^2}{16 {R_1}^4 R_2 \ra^3} \left( 
  {R_2}^4 (-1 + 2 t)^3 
- R_1 {R_2}^3 (1 + 15 t - 42 t^2 + 16 t^3)
\right. \\ \nonumber &\hspace{2.2cm} \left.
+  {R_1}^2 {R_2}^2 t (3 + 3 t - 28 t^2) 
+ {R_1}^3 R_2 t^2 (-3 + 13 t)
+ {R_1}^4 t^3
 \right)
+ \mcO(3),
\end{align*}
where 
\begin{equation*}
 \ra: = \sqrt{-{R}^2 (1 - 2 t)^2 + 2 R\, t - t^2},\qquad R:=\frac{R_2}{R_1},
\end{equation*} 
 $l$ is the value of the first integral $L$ generating the global $\mbS^1$-action and $j$ is the value of Eliasson's $Q_2$ function from \eqref{qs}.
\end{theol}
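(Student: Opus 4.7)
The plan is to adapt the complex-period method used by V\~u Ng\d{o}c and refined by the authors in \cite{ADH} for the coupled spin-oscillator. The Taylor series invariant at a focus-focus critical value $(l_0,h_0)$ is extracted from the action integral around a vanishing cycle on the nearby regular tori: if $\sigma(l,j)$ denotes this integral, then $\sigma$ has the expansion $\sigma(l,j)=S(l,j)-2\pi j\bigl(\ln j-1\bigr)$ for a smooth germ $S$, and the Taylor series of $S$ at the origin is the invariant in question.

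First, I would locate the focus-focus singularity explicitly. Because $L$ generates the $\mbS^1$-action, every critical point of $F=(L,H)$ is a fixed point of this action, hence is of the form $(z_1,z_2)\in\{\pm 1\}^2$. For three of these the Hessian of $H$ restricted to the symplectic slice has real eigenvalues (elliptic-elliptic), while the fourth has a complex-conjugate quartet of eigenvalues precisely on the open interval of $t$ in which the Hopf bifurcation window is open, as recorded by Sadovksii--Zhilinskii and Le Floch--Pelayo. A direct computation locates $m_{ff}$ and gives $(l_0,h_0)=F(m_{ff})$ as rational functions of $R_1,R_2,t$, together with the expression for $\ra$ as the modulus of the eigenvalues.

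Second, I would reduce by the $\mbS^1$-action. For each $l$ near $l_0$, the Marsden--Weinstein reduced space $M_l=L^{-1}(l)/\mbS^1$ is a topological $2$-sphere, on which $H$ descends to a smooth one-degree-of-freedom Hamiltonian $H_l$. Parametrising $M_l$ by the residual variables (for instance $z_1$ and the conjugate angle after eliminating $z_2$ through $L=l$) turns the level curves $\{H_l=h\}$ into real algebraic curves that, after complexification, are elliptic. The vanishing cycle around the pinch point and its complementary cycle give a symplectic basis whose periods
\[
\sigma(l,j)=\oint_{\gamma(l,j)}\xi\,d\eta
\]
can be written as contour integrals of an algebraic form. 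I would then separate the multivalued logarithmic contribution, which is fixed by the normal form to be $-2\pi j(\ln j-1)$, from the single-valued remainder $S(l,j)$. The linear coefficients of $S$ identify with the eigenvalue data of $d^2H|_{m_{ff}}$ modulo $dL$, recovering and extending to arbitrary $t$ the formulas of \cite{LFP}; the higher-order coefficients require expanding the residues of the period integrand at $m_{ff}$ to one further order.

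Third, I would carry out this expansion systematically. Writing the complexified level curve in a local uniformiser $w$ near the node, one has $\xi\,d\eta=\bigl(A(l,j)/w+B(l,j,w)\bigr)dw$ with $A$ and $B$ holomorphic in their arguments; the residue $A(l,j)$ encodes the logarithmic term and $B$ encodes $S$. Computing $B$ to order two in $(l,j)$ and integrating along a small loop around $w=0$ (which is elementary once $B$ is expanded) yields the quadratic coefficients displayed in the statement. Collecting them, substituting the explicit $m_{ff}$ and the expression for $\ra$, and simplifying produces the stated series.

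The main obstacle is the bookkeeping in step three: the period integrand has three independent parameters $(R_1,R_2,t)$ and the two quadratic coefficients $S_{ll},S_{lj},S_{jj}$ must each be reduced to a common denominator $16R_1^4R_2\ra^3$. Unlike the coupled spin-oscillator, the reduced phase space here is compact, which shortens the integration contours but enlarges the number of algebraic singularities competing in the residue computation. I expect this step to require a careful use of a computer algebra system to verify the coefficients and, in particular, the cancellations that make the apparent poles in $(1-t)$ or $R_2-R_1$ disappear outside the logarithmic $j$-term, in agreement with the smoothness of $S$ at the origin.
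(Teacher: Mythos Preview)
Your outline has a conceptual confusion that would prevent the computation from going through. The Taylor series invariant is extracted from the action integral over the \emph{real} (non-vanishing) cycle $\beta$ of the elliptic curve, not the vanishing cycle $\alpha$. The integral over the vanishing cycle is the imaginary action $\mathcal J(l,h)$, which is smooth and, after inversion, \emph{defines} the second Eliasson coordinate $j$; it carries no information about $S$ beyond fixing the Birkhoff normal form $h=\mathcal B(l,j)$. Your step three --- expanding $\xi\,d\eta$ in a local uniformiser and integrating ``along a small loop around $w=0$'' --- is precisely a residue computation on the $\alpha$-cycle, so what you would obtain is $\mathcal J$, not $S$. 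Relatedly, the singular part you subtract is incorrect: the correct regularisation is $S(w)=\mathcal A(w)-\mathcal A(0)+\mathrm{Im}(w\ln w-w)$ with $w=l+ij$, so the multivalued piece is $j\ln|w|+l\arg(w)-j$, not $-2\pi j(\ln j-1)$. The presence of $l\arg(w)$ is essential; without it the $l$-linear term of $S$ cannot be an $\arctan$.

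The paper in fact explicitly avoids expanding the action integral directly (the method of \cite{ADH}), because here that would require simultaneous expansions of complete elliptic integrals of first, second and three of third kind in three parameters. Instead it computes the \emph{partial derivatives} of $S$: one shows
\[
\partial_j \mathcal S = 2\pi\,\frac{\mathcal T}{\mathcal T^\alpha}\Big|_{h=\mathcal B(l,j)}+\ln|w|,
\qquad
\partial_l \mathcal S = 2\pi\Big(\mathcal W^\alpha\,\frac{\mathcal T}{\mathcal T^\alpha}-\mathcal W\Big)\Big|_{h=\mathcal B(l,j)}+\arg(w),
\]
where $\mathcal T,\mathcal W$ are the reduced period and rotation number (integrals over $\beta$) and $\mathcal T^\alpha,\mathcal W^\alpha$ their $\alpha$-cycle analogues. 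The $\alpha$-quantities are obtained by residues (this is where your local-uniformiser idea is actually used, to produce the Birkhoff normal form), while $\mathcal T$ and $\mathcal W$ are reduced to Legendre's $K(k)$ and $\Pi(n,k)$ and expanded via the known series near $k^2=1$. Integrating the two partial derivatives then yields $\mathcal S$. So the residue step you describe is one ingredient, but you are missing the real-cycle expansion of $\mathcal T$ and $\mathcal W$ through complete elliptic integrals, which is where the quadratic coefficients actually come from.
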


Theorem A is reformulated and proven in Theorem \ref{theoinv}. Using a discrete symmetry, the result is extended to the reverse coupled angular momenta in Corollary \ref{Taylinv}. To our knowledge this is the first time that higher order terms of the Taylor series invariant have been computed for a compact semitoric system. The authors calculated the Taylor series invariant for the coupled spin-oscillator in \cite{ADH}, which is a non-compact semitoric system, and the second author also calculated the Taylor series invariant of the spherical pendulum in \cite{Du}, which is in fact not a semitoric system in the usual sense of the word, since the angular momentum integral is not proper (more details on the different types of properness in semitoric systems can be found in Pelayo $\&$ Ratiu $\&$ \vungoc\!\! \cite{PRV}).

Since the system \eqref{sysin} depends on three parameters, the computation of the Taylor series invariant becomes quite involved at some points. To deal with this situation, a different approach is used than in \cite{ADH}. More precisely, the invariant is not obtained directly from the computation of the expansion of the action of the system, but from the period of the reduced system and the rotation number instead. These two quantities are related to the partial derivatives of the Taylor series and completely determine the invariant, since by definition it has no constant term.
All computations in this paper have been done with the assistance of \emph{Mathematica}. 

Theorem A reveals the explicit dependence of the coefficients of the Taylor series on the parameters of the system. We show some symmetries of these coefficients in Corollary \ref{Taylinv} and apply our result to the Kepler problem in prolate spheroidal coordinates in Proposition \ref{propkep}. An interesting observation is that the coefficients diverge when we move the parameter $t$ towards the limits of the interval for which the singularity is of focus-focus type (Remark \ref{reminf}). This opens the future question about how to generalise the metric on the moduli spaces of semitoric systems introduced by Palmer in \cite{Pa} to make it compatible with the transition of the singularity from focus-focus type to elliptic-elliptic type. 

The second main result of this paper is the computation of the polygon invariant. Le Floch \& Pelayo calculated in \cite{LFP} this invariant for the case $R_1<R_2$. We extend their result to the case $R_1>R_2$ and $R_1=R_2$. We also show that in the latter case, the reflection of each element of the invariant is also contained in the invariant.

\begin{theol}
The polygon invariant of the coupled angular momenta for the cases $R_1 < R_2$ and $R_1 = R_2$ are given by the $(\Z_2 \times \mcG)$-orbits represented in Figures \ref{figpolrev} and \ref{figpolkep} respectively.
\end{theol}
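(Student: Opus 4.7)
The approach is to follow the Pelayo--\vungoc procedure recalled in \S \ref{sec:polygon}. One computes the image $F(M) \subset \R^2$, identifies the focus-focus critical value, and then, for each cut sign $\epsilon \in \{-1,+1\}^\nff$, uses the developing map associated with the integral affine structure on the regular stratum to straighten the cut image into a rational convex polygon. The $\mcG$-action encodes the ambiguity in the choice of base point and affine chart, while the $\Z_2^\nff$-action encodes the flips of $\epsilon$. I would begin by computing the images of the four fixed points $(\pm N, \pm N)$ under $F = (L,H)$ explicitly as functions of $R_1, R_2, t$, and then linearising $F$ at each of them to classify it: for $R_1 \leq R_2$ and $t$ in the admissible interval, the point $(S_1, N_2)$ is the unique focus-focus point while the other three are elliptic-elliptic. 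The boundary components of $F(M)$ are then obtained from the rank-one critical set by extremising $H$ along each fibre $\{L = \text{const}\}$.

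Next, at each elliptic-elliptic vertex the two outgoing edges of the polygon are given by the primitive integral weights of the linear $\T^2$-action obtained from the Eliasson normal form at that point. Starting from the image of $(N_1, S_2)$ and integrating along $\partial F(M)$, I would assemble a first polygon representative; a vertical cut at $L = l_{\text{FF}}$ is then introduced, and the monodromy correction $T^\epsilon$, with $T = \left(\begin{smallmatrix} 1 & 0 \\ 1 & 1 \end{smallmatrix}\right)$, is applied on one side of the cut depending on the sign $\epsilon$. For $R_1 < R_2$ this procedure recovers the $(\Z_2 \times \mcG)$-orbit computed in \cite{LFP}, summarised here in Figure \ref{figpolrev}.

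For the case $R_1 = R_2$, the $L$-values of $(N_1, S_2)$ and of the focus-focus point $(S_1, N_2)$ both collapse to zero, so the focus-focus critical value lies on the vertical edge joining two boundary fixed points of the image. The polygon degenerates accordingly into the shape of Figure \ref{figpolkep}, which I would obtain either by direct computation of the developing map or by taking the limit $R_1 \to R_2^-$ of the non-degenerate polygon. To see that the horizontal reflection of each polygon representative is again contained in the orbit, I would exhibit the involution of $M = \mbS^2 \times \mbS^2$ swapping the two factors (composed, if necessary, with a sign reversal on each sphere) which, when $R_1 = R_2$, preserves $\om$ and intertwines $F$ with an affine involution $(l,h) \mapsto (l, -h + c)$ of $\R^2$ for an explicit constant $c$. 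Functoriality of the polygon invariant under semitoric isomorphisms then forces the reflected polygon to represent the same invariant, which accounts for the $\Z_2$ enlargement of the orbit in Figure \ref{figpolkep}.

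The main technical obstacle is the degenerate case $R_1 = R_2$: the standard local models used in the construction of the developing map break down where the focus-focus value migrates to the boundary of the image, so the straightening must either be carried out by a careful limit from $R_1 < R_2$, using continuity of the invariant in the parameters, or by excising a neighbourhood of the coalescing vertex and patching by hand. Verifying that the resulting polygon is rational convex of the shape in Figure \ref{figpolkep}, and that the exchange involution survives the limiting procedure so as to produce the $\Z_2$-enrichment of the orbit, is the key consistency check.
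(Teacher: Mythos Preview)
Your proposal takes a genuinely different route from the paper and contains a substantive error.

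\textbf{Different approach.} The paper does not recompute the developing map or the isotropy weights. Instead it leverages Proposition~\ref{proptrans}: the map $\Xi$ swapping the two spheres (with sign reversals) is a symplectomorphism that sends the system to itself with $L\mapsto -L$. Hence the polygon invariant for the reverse case $R_1>R_2$ is obtained by reflecting the Le~Floch--Pelayo polygons about the \emph{vertical} axis. The Kepler case $R_1=R_2$ is then obtained by letting $R_1\to R_2$ from either side; since the two limits must agree, the vertical reflection of every polygon in the orbit is again in the orbit. This is a two-line argument once Proposition~\ref{proptrans} is in hand, whereas your direct reconstruction via the affine developing map is considerably longer (the paper does mention your Duistermaat--Heckman route as an alternative).

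\textbf{The reflection is in the wrong variable.} You claim that the factor-swapping involution intertwines $F$ with $(l,h)\mapsto (l,-h+c)$. It does not. For $R_1=R_2$ the map $\Xi$ of Proposition~\ref{proptrans} fixes $H'$ (and hence $H$ up to an affine shift) and sends $L'\mapsto -L'$; the induced affine map on the base is $(l,h)\mapsto(-l,h)$, a reflection about the vertical axis, which is exactly what the paper uses. Your horizontal reflection would not even preserve the cut direction convention required by the polygon invariant.

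\textbf{The degeneracy concern is misplaced.} You assert that at $R_1=R_2$ ``the focus-focus value migrates to the boundary of the image'' and that the local models for the developing map break down. This does not happen: the focus-focus value stays interior. What occurs is that the elliptic-elliptic vertex $(z_1,z_2)=(-1,1)$ acquires the same $L$-value as the focus-focus point, so the vertical cut line passes through a vertex of the polygon. This is a mild combinatorial coincidence, not an analytic degeneration of the affine structure, and the straightening map remains perfectly well defined.

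\textbf{Minor.} The focus-focus point is $m=(0,0,1,0,0,-1)$, i.e.\ $(N_1,S_2)$, not $(S_1,N_2)$ as you write; your argument survives this relabelling but you should correct it.
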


Theorem B is restated and proven in Theorem \ref{theopol}. These polygons correspond also to the second and third column of Figure \ref{figtwistab}. The third main result of the present paper is the computation of the height invariant. Le Floch \& Pelayo \cite{LFP} calculated this invariant for the specific case $t=\frac{1}{2}$. We extend their result to any value of $t$ for which there is a singularity of focus-focus type.

\begin{theol}
The height invariant of the coupled angular momenta is
\begin{align*}
\mfh=2\min\{R_1,R_2\} + \dfrac{R_1}{\pi t} &\left( 
\ra - 
2R\,t \arctan \left( \dfrac{\ra}{R-t} \right)-
2\,t \arctan \left( \dfrac{\ra}{R+t-2R\,t} \right)
\right).
\end{align*}
\end{theol}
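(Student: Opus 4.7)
The plan is to compute the height invariant by directly integrating the reduced symplectic form over the subset of the focus-focus fibre where the reduced Hamiltonian is below its critical value. From the analysis in \cite{LFP}, the focus-focus singularity is located at $(x_1,y_1,z_1,x_2,y_2,z_2)=(0,0,1,0,0,-1)$ with $(L,H)=(0,0)$, so we need to compute
$$\mfh=\frac{1}{2\pi}\iint_{\bar H<0}\om_{\text{red}},$$
where $\om_{\text{red}}$ is the reduced symplectic form on $L^{-1}(0)/\mbS^1$ and $\bar H$ the induced Hamiltonian.

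Using cylindrical coordinates $(z_i,\theta_i)$ on each sphere with $\om=R_1\,dz_1\wedge d\theta_1+R_2\,dz_2\wedge d\theta_2$, the constraint $L=0$ reads $z_2=-\tfrac{R_1}{R_2}(z_1-1)-1$, and the invariant angle $\phi=\theta_2-\theta_1$ together with $z_1$ are symplectic coordinates on the reduced sphere, in which $\om_{\text{red}}=R_1\,d\phi\wedge dz_1$ and
$$\bar H(z_1,\phi)=B(z_1)+tA(z_1)\cos\phi,\qquad A:=\sqrt{(1-z_1^2)(1-z_2^2)},\quad B:=(1-t)z_1+tz_1z_2+2t-1.$$
For fixed $z_1$, integration in $\phi$ yields $\phi$-measure $2\pi$ when $B+tA\le 0$, zero when $B-tA\ge 0$, and $2\arccos(B/(tA))$ otherwise. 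The substitution $u=1-z_1$ leads to the clean factorisation
$$t^2A^2-B^2=\frac{u^2}{R^2}\bigl(\ra^2-2Rt(1-t)u\bigr),$$
identifying the crossover $u^\ast:=\ra^2/\bigl(2Rt(1-t)\bigr)$, the point where $\bar H$ transitions from straddling to not straddling zero along each $\phi$-circle. A short algebraic check shows $u^\ast\le u_{\max}:=2\min\{R_1,R_2\}/R_1$ (because $u_{\max}-u^\ast$ is a perfect square divided by a positive quantity), so the $u$-integral splits at $u^\ast$.

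I would then evaluate the resulting arccos integral by integrating by parts against $v=u-u^\ast$, which kills the boundary term at $u=u^\ast$ (where $\arccos(B/(tA))\in\{0,\pi\}$), while the boundary term at $u=0$ involves $\arccos(g(0^+))$ with $g(0^+):=[(2t-1)R+t]/(2t\sqrt{R})$; the key identity $1-g(0^+)^2=\ra^2/(4t^2R)$, a direct algebraic consequence of the definition of $\ra$, rewrites this boundary contribution as an arctan of $\ra$ over a linear combination of $R$ and $t$. Differentiating the arccos inside the remaining integral produces $1/\sqrt{\ra^2-2Rt(1-t)u}$ times a rational function of $u$, and the substitution $s=\sqrt{\ra^2-2Rt(1-t)u}$ converts everything into a sum of elementary rational integrals in $s$ whose antiderivatives are arctangents with denominators $R-t$ and $R+t-2Rt$ (these emerging from a partial-fraction decomposition). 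Finally, the case $R_1>R_2$ follows from the symmetry swapping the two spheres, which explains the uniform appearance of $\min\{R_1,R_2\}$. I expect the main obstacle to be the careful bookkeeping of boundary contributions and of the sign of $B$ on $(u^\ast,u_{\max}]$ as $t$ varies in the focus-focus regime $(t_-,t_+)$, together with the choice of branch of the arctangents, since in the regime $R-t<0$ (which occurs when $R_1>R_2$ with large $t$) the formula requires arctan values in $(0,\pi)$ rather than the principal branch; the identity $\arctan(a)+\arctan(b)=\arctan((a+b)/(1-ab))\pmod{\pi}$ and the corresponding sign analysis are what ultimately reconcile the various cases into the single uniform expression of Theorem C.
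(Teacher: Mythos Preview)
Your approach is essentially the same as the paper's: both compute the symplectic volume of $\{\bar H<0\}$ on the reduced space at $l=0$, and your variable $u=1-z_1$ coincides with the paper's scaled $p_2$, your crossover $u^\ast=\ra^2/(2Rt(1-t))$ is exactly their root $\zeta_3$, and your factorisation $t^2A^2-B^2=\tfrac{u^2}{R^2}(\ra^2-2Rt(1-t)u)$ is their observation that $P(p_2)$ acquires a double root at $p_2=0$ when $(l,h)=(0,0)$. The only difference is presentational: the paper simply writes down an explicit antiderivative for the $\arccos$ integrand (found with computer algebra) and evaluates at the endpoints, whereas you outline how one would discover that antiderivative by hand via integration by parts and the substitution $s=\sqrt{\ra^2-2Rt(1-t)u}$; both routes land on the same arctangent terms, and both note that $\arctan$ must take values in $[0,\pi]$ to cover all parameter regimes uniformly.
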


Theorem C is restated and proven in Theorem \ref{theoheight}. The last main result of the present paper is the computation of  the twisting-index invariant. To our knowledge, this invariant has never been calculated for a compact semitoric system before. For the non-compact case, the authors computed it for the coupled spin-oscillator in \cite{ADH}. For the coupled angular momenta we have the following result:

\begin{theol}
The twisting-index invariant of the coupled angular momenta is given by the association of a twisting index $k$ to each of the labelled polygons in the polygon invariant as represented in Figure \ref{figtwistab}. In particular, the polygons in the last two rows of Figure \ref{figtwistab} have $k=0$.
\end{theol}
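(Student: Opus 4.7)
The plan is to follow the standard procedure of Pelayo and \vungoc for recovering the twisting index from the polygon and Taylor series data, adapting the approach used by the present authors for the coupled spin-oscillator in \cite{ADH} to the compact setting. Since $\nff=1$ for the coupled angular momenta, there is a single focus-focus critical value $c_{\mathrm{ff}} := F(m_{\mathrm{ff}})$ and, for each representative polygon $\Delta$ in the $(\Z_2\times\mcG)$-orbit obtained in Theorem B, we only have to assign one integer $k(\Delta)\in\Z$.

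First I would fix one reference polygon (for concreteness, the top-left representative in each block of Figure \ref{figtwistab}) and compute its twisting index directly. Eliasson's normal form near $m_{\mathrm{ff}}$ supplies a ``privileged'' second action $\tau$ whose differential at $c_{\mathrm{ff}}$ is determined, modulo integer multiples of $\dee L$, by the linear coefficient of $l$ in the Taylor series $S$ of Theorem A, namely
\begin{equation*}
\sigma_1 \;=\; \arctan\!\left( \dfrac{{R_2}^2(-1+2t)-R_1R_2(1+t)+R_1^2 t}{(R_1-R_2)R_1\,\ra} \right).
\end{equation*}
The polygon $\Delta$ in turn yields a global toric action $H^\Delta$ away from its vertical cut, and the primitive integral vectors of the edges of $\Delta$ adjacent to the cut at $c_{\mathrm{ff}}$ determine the differential of $H^\Delta$ at that point. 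Comparing these with the differential of $\tau$ yields a single linear equation in $\Z$ whose solution is the twisting index of the reference polygon.

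Once the reference is handled, I would extend to the rest of the orbit by the equivariance rule of \vungoc: flipping the cut sign at $m_{\mathrm{ff}}$ from $+1$ to $-1$ replaces $\Delta$ by $T_u^{-1}\Delta$, where $T_u$ is the transvection fixing the vertical line through $c_{\mathrm{ff}}$, and shifts the twisting index by an integer computed directly from the local edge slopes. Iterating this rule, together with the action of $\mcG$ (generated by such transvections and, in the $R_1=R_2$ case, also by a $\Z_2$-reflection) propagates $k$ across the entire orbit and produces the labelling of Figure \ref{figtwistab}. The vanishing $k=0$ claimed for the polygons in the last two rows is then a direct check: by the symmetries of Corollary \ref{Taylinv}, the reverse ($R_1>R_2$) and symmetric ($R_1=R_2$) cases reduce to the standard one in such a way that the chosen representatives coincide with the zero-twist polygons.

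The main obstacle is the reference computation itself. It requires a careful alignment of sign and orientation conventions between Eliasson's normal form (which depends on the Weinstein chart used in Theorem A to evaluate $Q_2$ from \eqref{qs}) and the polygon construction of Theorem B; in particular one must fix a consistent orientation of the $\mbS^1$-action induced by $L$ and a consistent branch of $\arctan$ on either side of the cut. Once these conventions are reconciled, everything else is a finite bookkeeping that can be handled, as elsewhere in this paper, with the assistance of \emph{Mathematica}.
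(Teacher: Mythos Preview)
Your overall strategy --- determine the twisting index for one reference polygon and then propagate by the $(\Z_2\times\mcG)$-action --- matches the paper's, but your proposed \emph{reference computation} does not work as stated and differs substantially from what the paper does.

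The core gap is your claim that ``the primitive integral vectors of the edges of $\Delta$ adjacent to the cut at $c_{\mathrm{ff}}$ determine the differential of $H^\Delta$ at that point.'' The focus-focus critical value lies in the \emph{interior} of each polygon $\Delta$, not on its boundary; there are no edges adjacent to $c_{\mathrm{ff}}$ to read off. Moreover, neither the generalised toric momentum map $\mu$ nor the privileged momentum map $\nu$ is differentiable at $m_{\mathrm{ff}}$ (both carry the $\imp(w\ln w - w)$ singularity), so ``comparing differentials at $c_{\mathrm{ff}}$'' is not a well-posed step. The linear coefficient $\sigma_1$ of the Taylor series alone does not suffice to pin down $k$ without some additional global input; indeed $\sigma_1$ is only defined modulo $\pi$, and shifting it by $\pi$ is precisely the ambiguity that moving $k$ by $1$ creates.

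What the paper actually does is global: it identifies the privileged map $I_m$ with the reduced action $I\circ F - I(0,0)$ (uniqueness forces this, since both satisfy $\partial_i\mfp=\tau_i/2\pi$ and vanish at $m$), then computes the \emph{image} of $\nu=(L,I_m)$ on all of $M$ --- either via the Abelian integral of Lemma~\ref{actint} or numerically via the truncated Taylor series --- and observes that this image \emph{is} one of the polygons of the polygon invariant. That polygon is therefore the one with $k=0$. Continuity of the $l$-coefficient of $S$ in $t$ then shows $k$ is independent of $t$, and the reverse and Kepler cases follow from Proposition~\ref{proptrans}.

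A secondary point: your equivariance rule is misstated. In the conventions of \S\ref{sec:twistingIndex}, the $\Z_2$-action (flipping the sign $\epsilon$) leaves $k$ \emph{unchanged}; only the $\mcG$-action by $T^{k'}$ shifts $k\mapsto k+k'$. So the propagation step is simpler than you suggest, but the reference step is where the real work (and your gap) lies.
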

Theorem D is restated and proven in Theorem \ref{theotwis}. This result completes the symplectic classification of the whole family of coupled angular momenta.

\subsection*{Structure of the paper}
In Section 2 we briefly summarise the definition of each of the five symplectic invariants of semitoric systems. In Section 3 we introduce the coupled angular momenta system and calculate its Taylor series invariant. Section 4 is devoted to the polygon invariant of the system. In Section 5 we calculate the height invariant of the system and in Section 6 we compute the twisting-index invariant.

In Appendix A we review some properties of elliptic integrals for the reader's convenience. Appendix B consists of the lists of coefficients that appear in the calculations of the Taylor series invariant. Since they depend on several parameters, they would otherwise make the paper cumbersome to read.

\subsection*{Figures} 
Figures \ref{figtwistab}, \ref{figmom}, \ref{physical}, \ref{hami}, \ref{hami2}, \ref{linterms}, \ref{quadterms}, \ref{figpolstan}, \ref{figpolrev}, \ref{figpolkep}, \ref{figheight}, \ref{twisfig}, \ref{twisfigmat} and \ref{twistbig}, have been made with \emph{Mathematica} and Figures \ref{cycles2} and \ref{cycles} have been made with \emph{Inkscape}.

\subsection*{Acknowledgements} We wish to thank Yohann Le Floch, Joseph Palmer and San \vungoc for inspiring conversations and Wim Vanroose for sharing his computational resources with us. Moreover, the first author has been fully and the third authour has been partially funded by the Research Fund of the University of Antwerp. 

\begin{figure}[ht!]
\centering
\begin{tabular}{m{1.5cm} m{3.6cm} m{3cm} m{3.6cm}}
\begin{center}$\mathbf{(k,\epsilon)}$\end{center} &
\begin{center}$\mathbf{R_1 < R_2}$ \end{center} &
\begin{center}$\mathbf{R_1 = R_2}$ \end{center} &
\begin{center}$\mathbf{R_1 > R_2}$ \end{center} \\ 

\begin{minipage}{1.5cm} $k=-2$\\$\varepsilon=-1$ \end{minipage} &
\begin{center}\includegraphics[width=3.4cm]{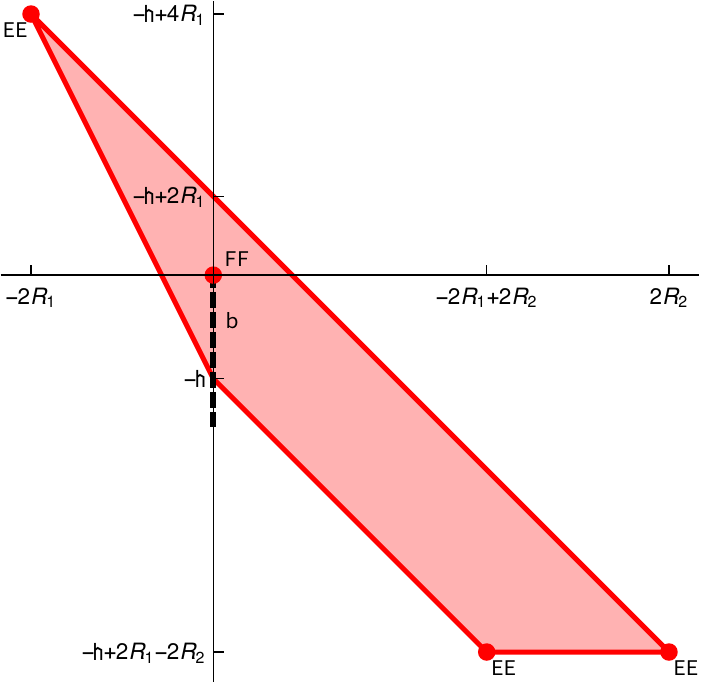}\end{center} &
\begin{center}\includegraphics[width=3.0cm]{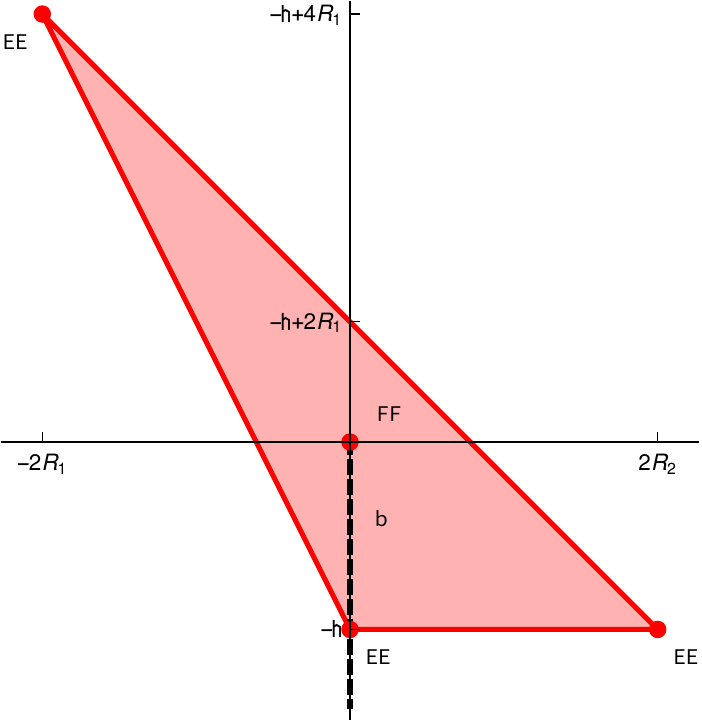}\end{center} &
\begin{center}\includegraphics[width=3.4cm]{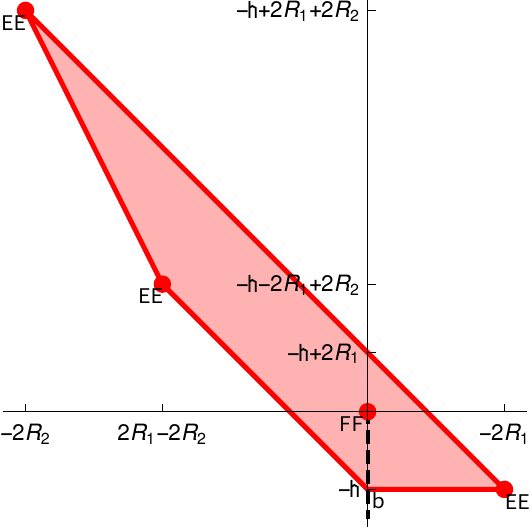}\end{center} \\

\begin{minipage}{1.5cm} $k=-1$\\$\varepsilon=+1$ \end{minipage} &
\begin{center}\includegraphics[width=3.4cm]{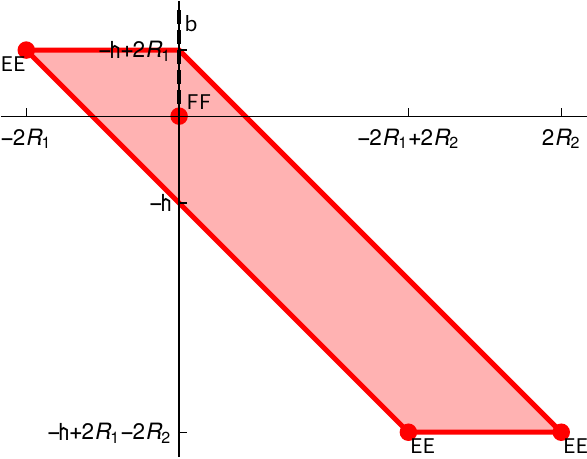}\end{center} &
\begin{center}\includegraphics[width=3.0cm]{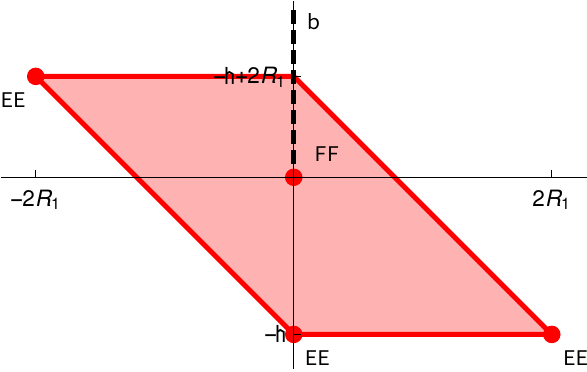}\end{center} &
\begin{center}\includegraphics[width=3.4cm]{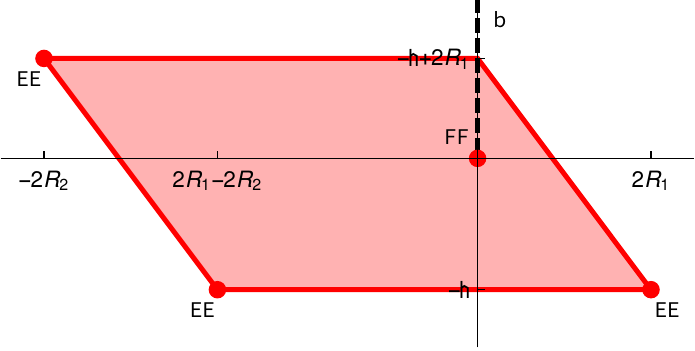}\end{center} \\

\begin{minipage}{1.5cm} $k=-1$\\$\varepsilon=-1$ \end{minipage} &
\begin{center}\includegraphics[width=3.4cm]{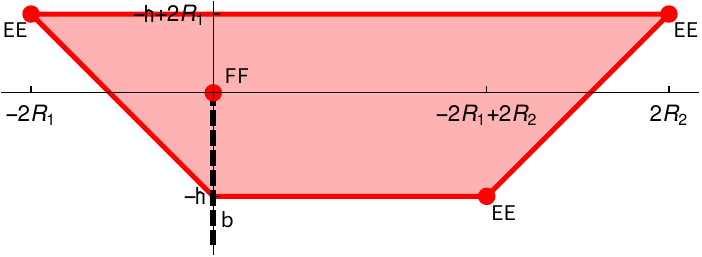}\end{center} &
\begin{center}\includegraphics[width=2.3cm]{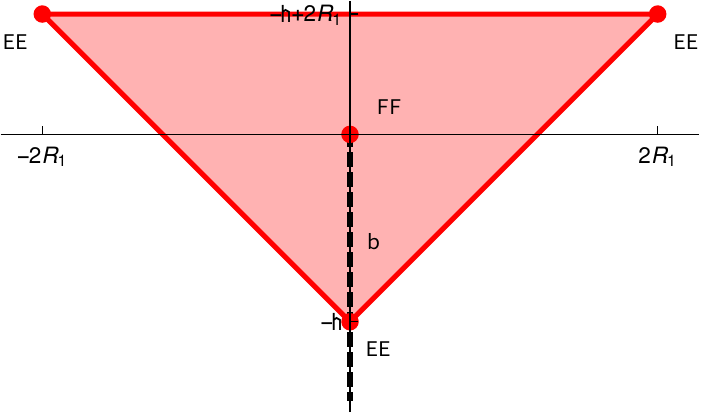}\end{center} &
\begin{center}\includegraphics[width=3.4cm]{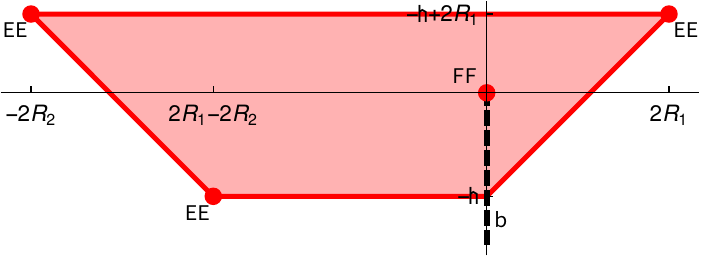}\end{center} \\

\begin{minipage}{1.5cm} $k=0$\\$\varepsilon=+1$ \end{minipage} &
\begin{center}\includegraphics[width=3.4cm]{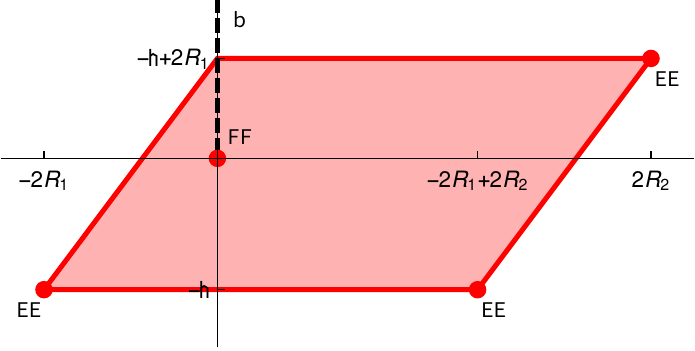}\end{center} &
\begin{center}\includegraphics[width=3.0cm]{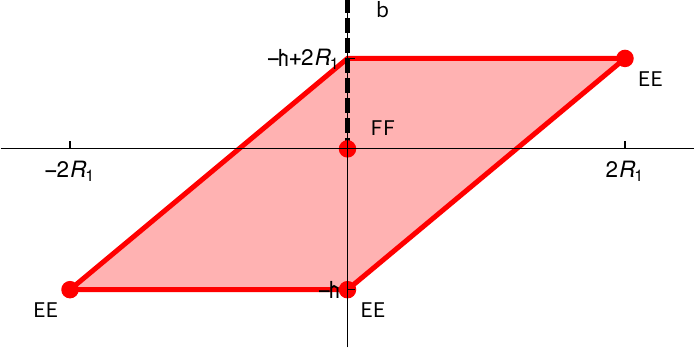}\end{center} &
\begin{center}\includegraphics[width=3.4cm]{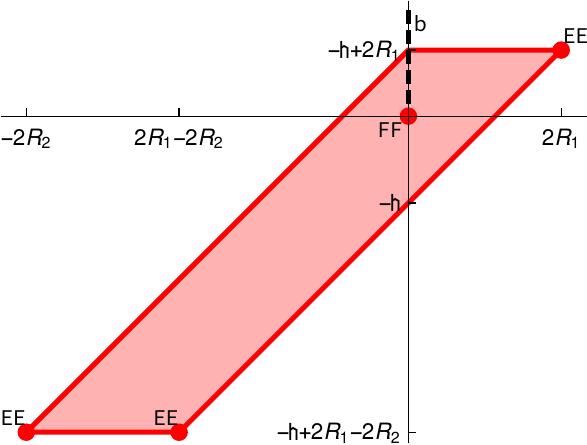}\end{center} \\

\begin{minipage}{1.5cm} $k=0$\\$\varepsilon=-1$ \end{minipage} &
\begin{center}\includegraphics[width=3.4cm]{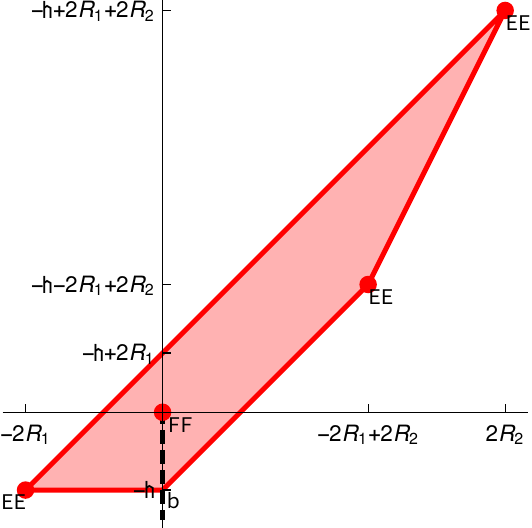}\end{center} &
\begin{center}\includegraphics[width=3.0cm]{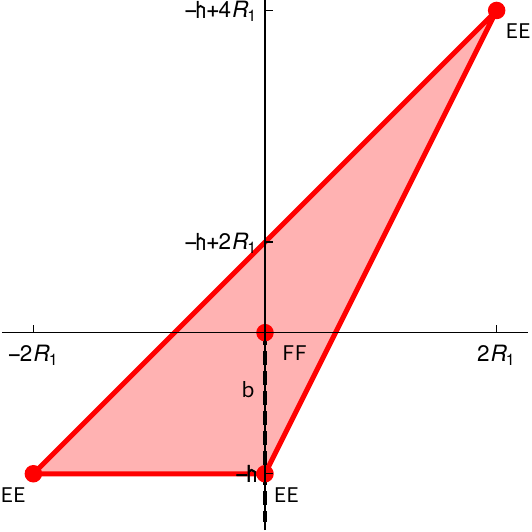}\end{center} &
\begin{center}\includegraphics[width=3.4cm]{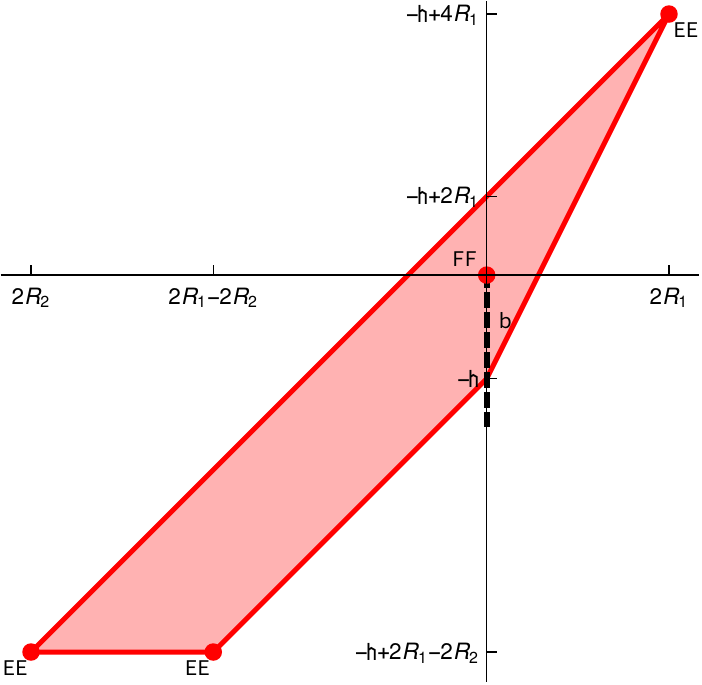}\end{center} \\
\end{tabular}
\caption{Representation of the twisting-index invariant of the coupled angular momenta for the standard case $R_1<R_2$, the Kepler problem $R_1=R_2$, and the reverse case $R_1>R_2$.}
\label{figtwistab}
\end{figure}

\newpage
\section{Semitoric invariants}
\label{sec:seminf}

In this section we briefly recall the definition of the five symplectic invariants associated to semitoric systems with one focus-focus singularity. A more detailed description of the invariants and their construction can be found in the original papers by Pelayo \& \vungoc \cite{PV1} and \cite{PV4} and in the recent notes by Sepe \& \vungoc \cite{SV}. We start with the Taylor series invariant, which can be regarded as a \emph{semi-global} invariant, i.e.\ it only depends on the characteristics of the system in neighbourhoods of the focus-focus critical fibres, and therefore can be constructed in more general classes of systems (see \cite{PRV}). After that we introduce the other four symplectic invariants, which are of \emph{global} nature since their construction depends on the system as a whole. 

\subsection{Setting}
Let $(M,\om)$ be a 4-dimensional connected symplectic manifold. A \emph{Hamiltonian function} is a smooth function $f: M \to \R$. Using the non-degeneracy of the symplectic form $\om$ we define the vector field $\mcX_f$, called the \emph{Hamiltonian vector field} associated to $f$, via $\om(\mcX_f,\cdot)=-df$. The flow of $\mcX_f$ is the \emph{Hamiltonian flow} of $f$. Given two Hamiltonian functions $f,g:M\to \R$, the symplectic form $\om$ induces the \emph{Poisson bracket}, defined as $\{f,g\} := \om(\mcX_f,\mcX_g) = -df(\mcX_g) = dg(\mcX_f)$. If the Poisson-bracket vanishes, i.e.\ $\{f,g\}=0$, then $f$ and $g$ are said to \emph{Poisson-commute}, which has as a consequence that each of the functions is constant along the Hamiltonian flow of the other. 

Consider now a pair of Hamiltonian functions $(L,H):M \to \R^2$. The triple $(M,\om,(L,H))$ is said to be a \emph{completely integrable system} with two degrees of freedom if $L$ and $H$ Poisson-commute and their differentials $dL$, $dH$ are linearly independent almost everywhere. In such a case, the \emph{momentum map} $F:=(L,H):M \to \R^2$ induces a fibration on $M$. The points where the differential $dF$ has no maximal rank, i.e.\ where $dL$, $dH$ fail to be linearly independent, are called \emph{singularities} or \emph{critical points}. The points that are not critical are said to be \emph{regular}.

The dynamics of a completely integrable system on \emph{regular fibres}, i.e.\ compact connected fibres containing only regular points, occurs along so-called \emph{Liouville tori} and is well understood thanks to the \emph{Arnold-Liouville theorem} (cf.\ Arnold \cite{Ar}). Therefore, the key to understanding the dynamical particularities of a system must reside in the fibres that contain at least one critical point, the \emph{singular fibres}. Eliasson \cite{El1,El2} and Miranda \& Zung \cite{MZ} described the singularities of completely integrable systems using normal forms. Restricting ourselves to the 4-dimensional case $(M,\om,(L,H))$, let $m \in M$ be a \emph{non-degenerate} singularity (cf.\ Bolsinov \& Fomenko \cite{BoF} or Vey \cite{Ve} for a precise definition). Then there are local symplectic coordinates $(x_1,y_1,x_2,y_2)$ defined on a neighbourhood $U \subseteq M$ and centered at $m$, together with functions $Q_1,Q_2:U \to \R$ such that $\{L,Q_i\} = \{ H,Q_i \}=0$, $i=1,2$ and the $Q_i$'s take one of the following forms:
\begin{itemize}
	\item \emph{Regular component:} $Q_i(x_1,y_1,x_2,y_2)=y_i$.
	\item \emph{Elliptic component:} $Q_i(x_1,y_1,x_2,y_2) = \dfrac{{x_i}^2+{y_i}^2}{2}.$
	\item \emph{Hyperbolic component:} $Q_i(x_1,y_1,x_2,y_2)=x_i y_i.$
	\item \emph{Focus-focus components} (always come in pairs):
	\begin{equation*}
	\begin{cases}
	Q_1(x_1,y_1,x_2,y_2) = x_1 y_2 - x_2 y_1 \\
	Q_2(x_1,y_1,x_2,y_2) = x_1 y_1 + x_2y_2.
	\end{cases}
	\end{equation*}
\end{itemize}

A \emph{semitoric system} is a 4-dimensional completely integrable system $(M,\om,F=(L,H))$ with 2 degrees of freedom satisfying that all singularities are non-degenerate and have no hyperbolic components, the map $L$ is \emph{proper} (i.e.\ the preimage of a compact set by $L$ is again compact) and it induces a faithful Hamiltonian $\mbS^1$-action on $M$. The flow of $L$ is thus $2\pi$-periodic. 

In order to keep the notation light, we restrict ourselves to semitoric systems with only one focus-focus singularity $m\in M$, with critical value $c:=F(m)$.

%%%%%%%%%%%%%%%%%%%%%%%%%%%%%%%%%%%%%%%%%%%%%%%%%%%%%%%%%%%%%%%%%%%%%%%%%%%%%%%
%%%%%%%%%%%%%%%%% new subsection %%%%%%%%%%%%%%%%%%%%%%%%%%%%%%%%%%%%%%%%%%%%%%%

\subsection{The Taylor series invariant}
\label{sec:taylor}
%Let $m\in M$ be a singularity of focus-focus type and $c:=F(m)$ the corresponding critical value. 
Let us consider the local model of a focus-focus point $(\R^4,\omc,Q)$, where $(x_1,y_1,x_2,y_2)$ are local coordinates in $\R^4$, $\omc = dx_1 \wedge dy_1 + dx_2 \wedge dy_2$ and $Q:=(Q_1,Q_2)$ are given by
\begin{equation}
Q_1(x_1,y_1,x_2,y_2):=x_1 y_2 - x_2y_1,\qquad Q_2(x_1,y_1,x_2,y_2) := x_1 y_1 + x_2 y_2.
\label{qs}
\end{equation} By the normal form theorem by Eliasson \cite{El1,El2} and Miranda \& Zung \cite{MZ} we find neighbourhoods $U\subseteq M$ of $m$ and $V \subseteq \R^4$ of 0, a symplectomorphism $\varphi: (U,\om|_{U}) \to (V,\omc|_{V})$ and a diffeomorphism $g=(g^{(1)}, g^{(2)}):F(U) \to Q(V)$ such that the following diagram commutes:

\begin{center}
\begin{tikzpicture}
 \matrix (m) [matrix of math nodes,row sep=4em,column sep=6em,minimum width=2em]
 {
 U \subseteq M & F(U) \subseteq \R^2 \\
 V \subseteq \R^4 & Q(V) \subseteq \R^2 \\};
 \path[-stealth]
 (m-1-1) edge node [left] {$\varphi$} 
 				node [below,rotate=90] {$\sim$} (m-2-1)
 			edge node [above] {$F$} (m-1-2)
 (m-1-2) edge node [left] {$g$}
 				node [below,rotate=90] {$\sim$} (m-2-2)
 (m-2-1) edge node [above] {$Q$} (m-2-2);
\end{tikzpicture}
\end{center}

%$g \circ F = Q \circ \varphi$. %

The pair $(\varphi,g)$ is called \emph{Eliasson isomorphism} for $(M,\om,F)$. It can be chosen such that it satisfies
\begin{equation*}
L|_U = Q_1 \circ \varphi,\qquad \frac{\partial g^{(2)}}{\partial h}>0,
\end{equation*} which implies that $g$ is of the form $g(l,h) = (l,g^{(2)}(l,h))$ and thus orientation-preserving. Using the diffeomorphism $g$ we can make our construction semi-global by defining the map $\Phi:W \to Q(V)$ by $\Phi:= g \circ F$, where $W:=F^{-1}(F(U)) \subseteq M$ is a neighbourhood of the focus-focus fibre. The map $\Phi=(\Phi_1,\Phi_2)$ agrees with $Q \circ \varphi$ on $U$, which means that the flow of $\mathcal{X}^{\Phi_1}$ is $2\pi$-periodic and generates an effective $\mbS^1$-action on the regular fibres close to the singular fibre. Define a complex coordinate $w=(w_1,w_2)=w_1 + i w_2$ on $Q(V) \subseteq \R^2 \simeq \C$ and consider a regular fibre $\Lam_w := \Phi^{-1}(w)$ for $w$ close to the origin. From any point $a \in \Lam_z$ we follow the Hamiltonian flow of $\mcX^{\Phi_2}$ until we reach again the $\Phi_1$-orbit of $a$ for the first time. Once at this point, we follow the Hamiltonian flow of $\mcX^{\Phi_1}$ until we are back at $a$, as illustrated in Figure \ref{cycles2}.

\begin{figure}[ht]
 \centering
 \includegraphics[width=9cm]{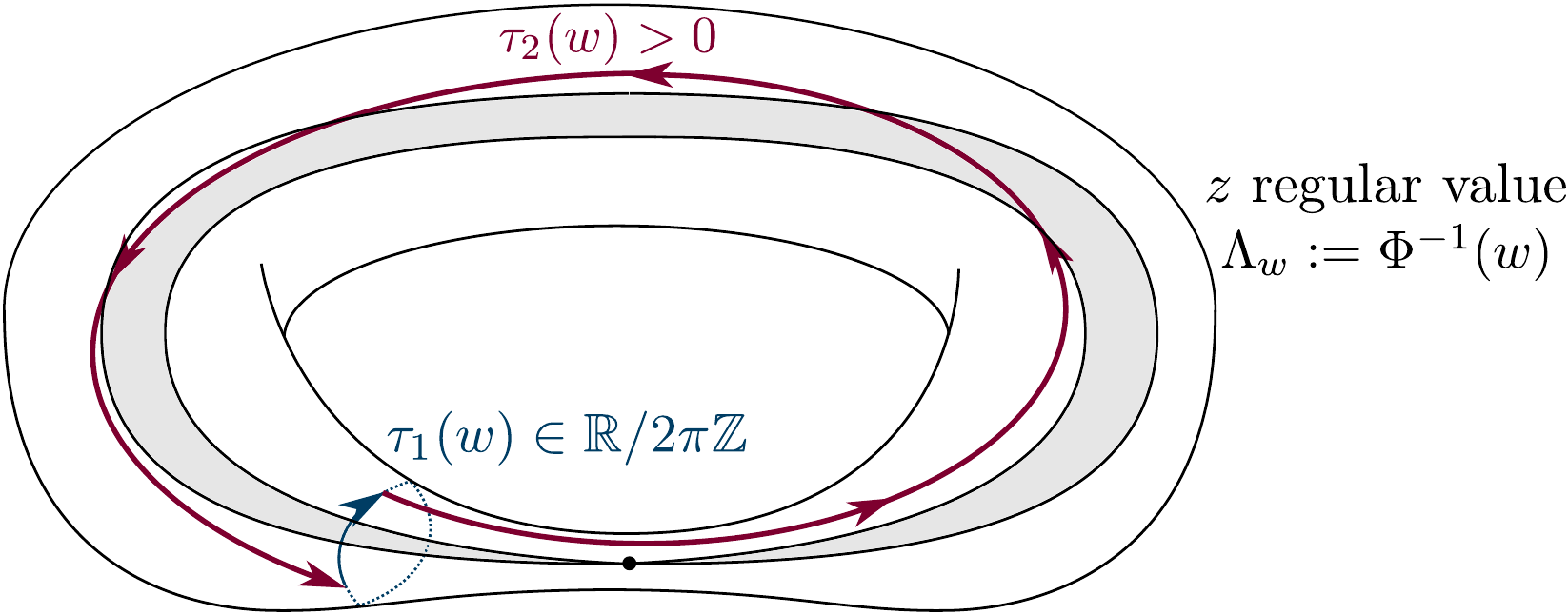}
 \caption{\small A regular fibre $\Lambda_z$ close to the singular fibre $\Lambda_0$ (gray). First we follow the flow generated by $\Phi_2$ (red) and then the flow generated by $\Phi_1$ (blue).}
 \label{cycles2}
\end{figure}

Denote by $\tau_1(w)\in \R / 2\pi \Z$ the time needed for the displacement following $\mcX^{\Phi_1}$ and $\tau_2(w)>0 $ for the displacement following $\mcX^{\Phi_2}$, which are independent of $a$. For any choice of determination $\ln$ of the complex logarithm, define
%Let us fix a choice $\ln$ of the logarithm and define
\begin{equation}
\begin{cases}
\sigma_1 (w): = \tau_1(w) - \imp(\ln w)\\
\sigma_2 (w): = \tau_2(w) + \rep (\ln w).
\end{cases}
\label{taus}
\end{equation} \vungoc proved in \cite{Vu1} that $\si_1$ and $\si_2$ extend to smooth single-valued functions around the origin and that the 1-form
\begin{equation}
\sigma := \sigma_1 dw_1+ \sigma_2 dw_2
\label{sigma}
\end{equation} is closed. For later convenience, we impose $\sigma_2(0)\in[0,2\pi[\ $, which in turn fixes the determination of the logarithm. We keep this choice throughout the paper.

\begin{de}[From \cite{Vu1}]
\label{defS}
Let $S$ be the unique smooth function defined around $0 \in \mathbb{R}^2$ such that \begin{equation*}
 \begin{cases}
 \dee S =\sigma\\
S(0)=0,
 \end{cases}
 \end{equation*} where $\sigma$ is the one form given by \eqref{sigma}. The Taylor series of $S$ at $(0,0)$ is denoted by $(S)^\infty$. We say that $(S)^\infty$ is the Taylor series invariant of the system.
\end{de}

By construction the Taylor series $(S)^\infty$ has no constant term and $\frac{\partial S}{\partial z_1}(0) \in [0,2\pi[\ $. The function $S(w)$ can be interpreted as a regularised action. Let $\varpi$ be a semi-global primitive of the symplectic form $\om$, $\be_w \subset \Lam_w$ the trajectory described in the definition of the times $\tau_i(w)$ and $\mcA(w) := \oint_{\be_w} \varpi$. Then,
\begin{equation}
S(w) = \mcA(w) - \mcA(0)+ \imp (w \ln w - w).
\label{inv5} 
\end{equation}

A small remark is that due to the different conventions in the literature on the definition of the Taylor series invariant, the linear coefficient in the first variable can sometimes appear shifted by $\frac{\pi}{2}$ (compare Le Floch $\&$ Pelayo \cite{LFP}, Pelayo $\&$ Ratiu $\&$ \vungoc\!\! \cite{PRV}, Pelayo $\&$ \vungoc\!\! \cite{PV3}, Sepe $\&$ \vungoc\!\! \cite{SV} with Dullin \cite{Du}, Pelayo $\&$ \vungoc\!\! \cite{PV1}), or scaled by $2\pi$ (cf.\ Sepe $\&$ \vungoc\!\! \cite{SV}).

\subsection{The number of focus-focus points invariant}
The number $\nff \in \N \cup \{0\}$ of singularities of focus-focus type is the second symplectic invariant of the semitoric system $(M,\om,F)$. By restriction to $t \in\ ]t^-, t^+[$ we have $\nff = 1$.

\subsection{The polygon invariant} 

\label{sec:polygon}
Let $B:=F(M)$ be the image of the momentum map. We denote vertical lines by $b_\lam := \{(\lam,y)\;|\;y \in \R\} \in \R^2$ and consider a linear transformation $T^k$, $k \in \Z$, where

\begin{equation}
 T^k := \begin{pmatrix}
1& 0 \\ k & 1
\end{pmatrix} \in \text{GL}(2,\Z).
\label{TT}
\end{equation} For a given $n \in \Z$ we define the transformation $t^n_{b_\lam}:\R^2 \to \R^2$ as the identity on the left half-plane and $T^n$ on the right half-plane with respect to $b_\lam$. Given a sign $\epsilon \in \{-1,+1\}$, let $b_{\lam}^{\epsilon} \subset b_{\lam}$ be the half-line that starts in $c$ and extends upwards if $\epsilon=+1$ or downwards if $\epsilon=-1$.  

A subset $\De \subseteq \R^2$ is said to be a \emph{convex polygon} if it is the -- possibly infinite -- intersection of closed half-planes such that the number of corner points on each compact subset of $\De$ is finite. If the slopes of all the edges are moreover rational, $\De$ is said to be \emph{rational}. Pelayo $\&$ \vungoc\ \cite{Vu2} show that, for a given choice of sign $\epsilon \in \{-1,+1\}$, there exists a map $f=f_{\epsilon}:B \to \R^2$ such that, among other properties, is a homeomorphism onto its image $\De = f(M)$ and satisfies that $\De$ is a rational convex polygon, that $f|_{B \backslash b^{\epsilon}_\lam}$ is a diffeomorphism into its image and that $f$ preserves the first coordinate, i.e.\ $f(l,h) = (l,f^{(2)}(l,h))$. This map is often called the \emph{cartographic homeomorphism} (cf.\ Sepe $\&$ \vungoc\ \cite{SV}).

Let $\mcV$ denote the subgroup of the group of integral-affine transformations Aff$(2,\Z):=GL(2,\Z) \ltimes \R^2$ consisting of the transformations obtained as a composition of a vertical translation with a $T^k$ as in \eqref{TT} for some $k \in \Z$. Then the map $f$ is unique up to a left composition with an element of $\tau \in \mcV$. Such a composition changes the map by $f' = \tau \circ f$ and its image by $\De' = \tau(\De)$. Similarly, a different choice of sign $\epsilon' \in \{-1,+1\}$ changes the map by $f_{\epsilon'} = t_{u} \circ f_{\epsilon}$ and its image by $\De' = t_{u}(\De)$, where $u = \frac{\epsilon-\epsilon'}{2}$. 

\begin{de}
A \emph{weighted polygon} is a triple of the form
$$ \De_\text{weight} := (\De,b_{\lam},\epsilon)$$ where $\De \subseteq \R^2$ is a rational convex polygon, $b_{\lam} \subset \R^2$ is a vertical line and $\epsilon \in \{-1,+1\}$. The space of weighted polygons is $\mcW$Polyg$(\R^2)$. 
\end{de} We write $\mathbb{Z}_2:=\Z \slash 2 \Z$ and denote by $\mcG := \{ T^k \;|\;k \in \Z\}$ the group of transformations as in \eqref{TT}. Then the product group $\Z_2 \times \mcG$ acts on $\mcW Polyg(\R^2)$ via $$ (\epsilon',T^{k'}) \cdot (\De,b_{\lam},\epsilon) = (t_{u}(T^{k'}(\De)),b_{\lam},\epsilon' \epsilon).$$ This group action encodes the freedom of definition of the cartographic homeomorphism $f$. From here we finally define:

\begin{de}
 Let $(M,\om,F)$ be a semitoric integrable system with one focus-focus singularity and $f$ a cartographic homeomorphism. The \emph{polygon invariant} of the system is the $(\Z_2\times \mcG)$-orbit 
 $$ (\Z_2 \times \mcG) \cdot (\De,b_{\lam},\epsilon) \in \mcW\text{Polyg}(\R^2)/(\Z_2\times \mcG)$$
 where $\De = f(M)$. 
\end{de} 

For systems with one focus-focus singularity the polygon invariant consists thus of a collection of $\Z_2 \times \Z$ weighted polygons, i.e.\ rational convex polygons together with the specification of the line $b_{\lam}$ and the corresponding sign choice $\epsilon\in \{-1,+1\}$.

%%%%%%%%%%%%%%%%%%%%%%%%%%%%%%%%%%%%%%%%%%%%%%%%%%%%%%%%%%%%%%%%%%%%%%%%%%%%%%%
%%%%%%%%%%%%%%%%% new subsection %%%%%%%%%%%%%%%%%%%%%%%%%%%%%%%%%%%%%%%%%%%%%%%

\subsection{The height invariant} 

\label{sec:height}

Let $f$ be a cartographic homeomorphism as in \S \ref{sec:polygon}. The map $\mu:M \to \De \subseteq \R^2$ defined by $\mu := f \circ F = f \circ (L,H) = (L, f^{(2)} \circ (L,H))$ is said to be a \emph{generalised toric momentum map} of the system $(M,\om,F)$. Consider the number
$$ \mfh := \mu(m) - \!\!\min_{s \in \De \cap b_{\lam}} \!\!\pi_2(s),$$
where $\pi_2:\R^2 \to \R$ is the canonical projection onto the second coordinate. The number $\mfh$ can be understood as the height of the image of the critical value $c$ under the cartographic homeomorphism, i.e.\ $\mu(m) = f(c)$, inside the rational convex polygon $\De$. A more geometric way to interpret $h$ is the following. Consider the submanifold $Y := L^{-1}(L(m)) \subset M$, which we split into $Y^+ := Y \cap \{ p \in M \; : \; H(p) > H(m)\}$ and $Y^- := Y \cap \{ p \in M : H(p) < H(m)\}$. Then the number $h$ corresponds to the symplectic volume of $Y^-$, that is, the real volume of $Y^-$ divided by 2$\pi$.

The number $\mfh$ is called \emph{height invariant} or \emph{volume invariant} and it is a symplectic invariant of the semitoric system $(M,\om,F)$. In particular, it is independent of the choice of $f$ and its image $\De$.

%%%%%%%%%%%%%%%%%%%%%%%%%%%%%%%%%%%%%%%%%%%%%%%%%%%%%%%%%%%%%%%%%%%%%%%%%%%%%%%
%%%%%%%%%%%%%%%%% new subsection %%%%%%%%%%%%%%%%%%%%%%%%%%%%%%%%%%%%%%%%%%%%%%%

\subsection{The twisting-index invariant}

\label{sec:twistingIndex}
Fix a sign  $\epsilon \in \{-1,+1\}$, a cartographic homeomorphism $f=f_{\epsilon}$ and consider the map $\Phi: W \to F(U)$ from \S \ref{sec:twistingIndex}, where $\Phi =(\Phi_1,\Phi_2) = (L|_W,\Phi_2)$. On each regular torus fibre in $W$ we can construct a Hamiltonian vector field
\begin{equation}
 2\pi \mcX := (\tau_1 \circ \Phi) \mcX_{\Phi_1} + (\tau_2 \circ \Phi) \mcX_{\Phi_2} = (\tau_1 \circ \Phi) \mcX_L + (\tau_2 \circ \Phi) \mcX_{\Phi_2},
 \label{Xp}
\end{equation} where $\tau_1, \tau_2$ are the functions defined in \eqref{taus}. This vector field is smooth on $F^{-1}(F(U)\backslash b_{\lam}^{\epsilon})$ and there exists a unique smooth function $I: F^{-1}(F(U)\backslash b_{\lam}^{\epsilon}) \to \R$ whose vector field is $\mcX$ and such that $\lim_{x \to m} I(x)=0$, as proven in Lemma 5.6 of Pelayo $\&$ \vungoc\!\! \cite{PV1}. This map is of the form $I = \mathfrak{p} \circ \Phi$, where $\mathfrak{p}$ satisfies $\partial_i \mathfrak{p} = \tau_i / 2\pi$, $i=1,2$, and as a consequence is of the form 
\begin{equation*}
2\pi \mathfrak{p} (w) = S(w) - \imp(w \ln w - w) + Const.
\label{strucp}
\end{equation*}
We define the \emph{privileged momentum map} around the focus-focus point $m$ as $\nu := (L,I)$. The map $\nu$ can be related to the generalised toric momentum map $\mu$ corresponding to $f$ by
\begin{equation}
\mu = T^{k} \circ \nu\quad \text{ on }W,
\label{twistwis}
\end{equation}
for a certain $k \in \Z$, sometimes refered to as {\em twisting index of $\De_\text{weight}$ at the focus-focus critical value $c$}. However, this integer is dependent on our choice of $f$. In particular, $k$ is unchanged by the action of $\Z_2$, but if we apply a global transformation $T^{k'} \in \mcG$, then $\mu$ transforms into $T^{k'} \circ \mu$ while $\nu$ remains unchanged, which implies that $k $ becomes $k' + k$. 

This group action can be formally written as follows. Let $\mcW$Polyg$(\R^2) \times \Z$ be the space of weighted polygons together with their corresponding twisting index $k$. The group $\Z_2 \times \mcG$ acts on $\mcW$Polyg$(\R^2) \times \Z$ as
$$ (\epsilon',T^{k'}) \star (\De,b_{\lam},\epsilon,k): = (t_{u}(T^{k'}(\De)),b_{\lam},\epsilon' \epsilon,k'+k).$$ 

This allows to define the twisting-index invariant for systems with one focus-focus singularity. 

\begin{de}
The {\em twisting-index invariant} of $(M,\omega,(L,H))$ is the $(\Z_2 \times \mcG)$-orbit 
$$ (\Z_2 \times \mcG) \star (\De,b_{\lam},\epsilon,k) \in (W\text{Polyg}(\R^2) \times \Z) / (\Z_2 \times \mcG)$$
of weighted polygons labelled by the twisting index at the focus-focus singularity of the system.	
\end{de}

%\begin{re}
%\label{twi1}
For systems with one focus-focus singularity, the twisting-index invariant is thus completely determined by the association of an index $k$ to one of the weighted polygons of the polygon invariant or, equivalently, by finding the polygon corresponding to $k=0$. The rest of the associations can be calculated using the fact that $\Z_2$ does not act on $k$ and $\mcG \simeq \Z$ acts by addition.

\section{The Taylor series invariant of the coupled angular momenta}
\subsection{Coupled angular momenta}
\label{sec:CAM}
Let $R_1,R_2$ be two positive real numbers and $t\in[0,1]$ a parameter. Consider the product manifold $M=\mbS^2 \times \mbS^2$ with symplectic form $\omega = - (R_1 \omega_{\mbS^2} \oplus R_2 \omega_{\mbS^2})$, where $\omega_{\mbS^2}$ is the standard symplectic form of the unit sphere $\mbS^2$. Let $(x_1,y_1,z_1,x_2,y_2,z_2)$ be coordinates on $M$, where $(x_i,y_i,z_i),$ $i=1,2$, are Cartesian coordinates on the unit sphere $\mbS^2 \subset \R^3$. The \emph{coupled angular momenta} is a family of 4-dimensional  completely integrable system $(M,\omega,(L,H))$, where the smooth functions $L,H:M \to \mathbb{R}$ are given by
\begin{equation}
\begin{cases}
L(x_1,y_1,z_1,x_2,y_2,z_2)\;:= R_1(z_1-1) + R_2 (z_2+1), \\
H(x_1,y_1,z_1,x_2,y_2,z_2):= (1-t) z_1 + t (x_1x_2 + y_1y_2 + z_1z_2) +2t-1.\\
\end{cases}
\label{sys1}
\end{equation} and $R_1 < R_2$. The case $R_1 > R_2$ will be called \emph{reverse coupled angular momenta}. 

Sadovksii \& Zhilinskii proved in \cite{SZ} that $L$ and $H$ Poisson-commute, i.e.\ $\{L,H\}=0$, and that the system has four critical points of maximal corank, namely $(0,0,\pm 1,0,0,\pm 1)$. All of them are of elliptic-elliptic type except for $m=(0,0,1,0,0,-1)$, which is non-degenerate and of focus-focus type for $t \in\ ]t^-,t^+[$ and degenerate for $t\in \{t^-,t^+\}$, where
\begin{equation*}
t^\pm = \dfrac{R_2}{2R_2+R_1\mp 2\sqrt{R_1 R_2}}.
\end{equation*} 

In particular $0<t^- < \frac{1}{2}  < t^+\leq 1$ so for the value $t=\frac{1}{2}$ there is always a focus-focus singularity. If $t<t^-$ or $t > t^+$ then $m$ is of elliptic-elliptic type. From now on we will assume that $t \in\ ]\ t^-,t^+[ $. Note that in \eqref{sys1} we have conveniently shifted $L$ and $H$ so that $(L,H)(m)=(0,0)$. The real part of the eigenvalue of the linearisation of the Hamiltonian vector field at the equilibrium point $m$ for $t \in\ ]t^-,t^+[$ is given by $\frac{r_A}{2R}$ which we now define for later convenience.

\begin{de}The \emph{discriminant square root} $\ra$ is defined as 
\begin{equation}
\ra: = \sqrt{-R^2 (1 - 2 t)^2 + 2 R t - t^2 } = \sqrt{(1 + 4 {R}^2)(t-t^-)(t^+-t)}, \quad R := \frac{R_2}{R_1}.
\label{RA} 
\end{equation} Note that $r_A$ is real when $t^- < t < t^+$, that is when $m$ is of focus-focus type and vanishes when $t = t^\pm$. When $m$ is of elliptic-elliptic type, the argument of the square root becomes negative. 
\label{sqr}
\end{de}

\begin{figure}[ht]
 \centering
 \includegraphics[width=7cm]{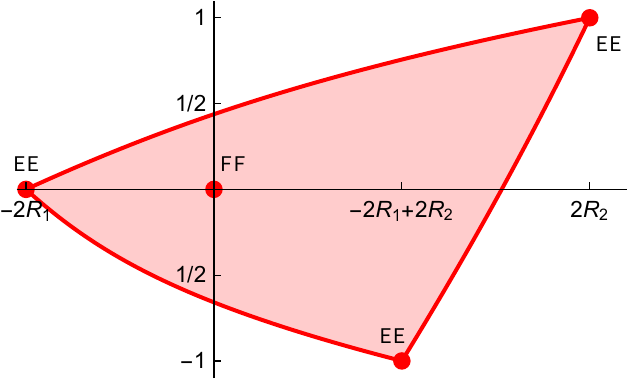}
 \caption{\small Image of the momentum map $F=(L,H)$ of the coupled angular momenta for $t=1/2$. The system has three singularities of elliptic-elliptic type and one of focus-focus type.}
 \label{figmom}
\end{figure}

Using the discrete symmetries of the system, the case $R_1 > R_2$ can be mapped to the cases $R_1 <  R_2$. The discrete symmetries are reflections about any plane that contains the $z$-axis of each sphere, e.g.\ the $xz$-plane, so that $y_i \to - y_i$. The reflection through any other plane can be obtained by composition with the $\mbS^1$-action. Recalling that $(x_i, y_i, z_i)$ with $i=1,2$ is an angular momentum, the natural time reversal is to flip all the sign, thus reversing the angular momenta, as Sadovksii \& Zhilinskii point out in \cite{SZ}. This does reverse $L$ (up to a constant) but changes $H$. If we modify $H$ by adding the right amount of $L$ we can construct an $H'$ that is a linear combination of $x_1 x_2 +y_1 y_2 + z_1 z_2$ and $z_1 - z_2$ so that it is invariant when in addition to flipping all signs also the indices of the coordinates are exchanged.

When $R_1 \not = R_2$ then also the indices of $R_1$ and $R_2$ need to be exchanged, thus changing the symplectic form. Composing these three symmetries leads to the following result, which relates the \emph{reverse} case  $R_1 >R_2$ to the standard case $R_1 < R_2$.
The exchange of $R_1$ and $R_2$ may seem surprising, but if instead of working with coordinates $x_i^2 + y_i^2 + z_i^2 = 1$ restricted to the unit sphere we would work on spheres of arbitrary size then the symplectic form would be free of parameters.

\begin{pro}
\label{proptrans}
The reverse coupled angular momenta are isomorphic as a semitoric system to the standard coupled angular momenta with the opposite sign of L.
\end{pro}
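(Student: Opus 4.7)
The plan is to construct an explicit symplectomorphism $\psi$ that witnesses the semitoric isomorphism, by composing the three discrete symmetries singled out in the paragraph preceding the proposition: the reflection $\sigma_a : (x_i,y_i,z_i) \mapsto (x_i,-y_i,z_i)$ on each sphere, the time-reversal antipodal map $\sigma_b : (x_i,y_i,z_i) \mapsto (-x_i,-y_i,-z_i)$, and the sphere-exchange $\sigma_c : (p_1,p_2) \mapsto (p_2,p_1)$. Concretely, I would take $\psi := \sigma_c \circ \sigma_b \circ \sigma_a$, so that
\[
\psi(x_1,y_1,z_1,x_2,y_2,z_2) = (-x_2,\,y_2,\,-z_2,\,-x_1,\,y_1,\,-z_1),
\]
geometrically the swap of the two factors composed with a rotation by $\pi$ about the $y$-axis on each factor. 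The two-sphere exchange forces the weights $R_1,R_2$ in the symplectic form to be interchanged, which is precisely what turns a reverse system ($R_1 > R_2$) into a standard one ($R_1 < R_2$).

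I would verify the claim in three steps. First, check that $\psi$ is a symplectomorphism $(M,\omega_{R_1,R_2}) \to (M,\omega_{R_2,R_1})$: each of $\sigma_a$ and $\sigma_b$ reverses the orientation of $\mbS^2$ and hence negates $\omega_{\mbS^2}$, so their composition (a rotation) preserves $\omega_{\mbS^2}$, and combined with $\sigma_c$ interchanging the two factors together with the exchange $R_1 \leftrightarrow R_2$ of the weights one obtains $\psi^*\omega_{R_2,R_1} = \omega_{R_1,R_2}$. Second, a direct substitution of $\psi$ into $L_{std}(p) = R_2(z_1-1) + R_1(z_2+1)$ (the first integral of the standard system with the swapped weights) yields $\psi^* L_{std} = -L_{rev}$, i.e.\ $\psi^*(-L_{std}) = L_{rev}$, which accounts for the ``opposite sign of $L$'' in the statement. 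Third, I would handle the Hamiltonian via the gauge trick indicated in the preceding discussion: since the coupling $x_1 x_2 + y_1 y_2 + z_1 z_2$ and the combination $z_1 - z_2$ are both $\psi$-invariant whereas $z_1+z_2$ is anti-invariant, the modified Hamiltonian $H' := H + \frac{t-1}{R_1+R_2} L$ is a linear combination of $(z_1-z_2)$ and $(x_1 x_2 + y_1 y_2 + z_1 z_2)$, and its expression is manifestly compatible with $\psi$.

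The delicate point, and the one I would be most careful about, is extracting from the invariance of $H'$ the precise smooth function $g:\R^2 \to \R$ with $\partial g/\partial H_{rev} > 0$ such that $\psi^*H_{std} = g(L_{rev},H_{rev})$, as required by the definition of semitoric isomorphism given in the introduction. The strategy here is to use the freedom to replace $H$ by $H + f(L)$, which changes neither the $\mbS^1$-action generated by $L$ nor the Liouville foliation, and to absorb all of the $L$-linear discrepancies arising in the comparison between $\psi^* H_{std}'$ and $H_{rev}'$ into such a gauge. The monotonicity $\partial g/\partial H_{rev} > 0$ will then follow from the observation that the coefficient of the coupling term in $H'$ is the same positive constant $t$ in both the reverse and the standard picture.
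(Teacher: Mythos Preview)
Your proposal is correct and follows essentially the same approach as the paper's proof: introduce the intermediate Hamiltonian $H' = H + \gamma L$ (the paper writes this as $H' = t(x_1x_2+y_1y_2+z_1z_2) + b(z_1-z_2)$ with the explicit constants $b = \frac{(1-t)R_2}{R_1+R_2}$ and $\gamma = \frac{1-t}{R_1+R_2}$), and then apply an explicit symplectomorphism that swaps the two spheres while interchanging $R_1$ and $R_2$. Your map $\psi(x_1,y_1,z_1,x_2,y_2,z_2)=(-x_2,y_2,-z_2,-x_1,y_1,-z_1)$ differs from the paper's $\Xi(x_1,y_1,z_1,x_2,y_2,z_2)=(x_2,-y_2,-z_2,x_1,-y_1,-z_1)$ only by a rotation by $\pi$ about the $z$-axis on each factor, which lies in the $\mbS^1$-action and is therefore immaterial; the paper also supplies the explicit affine gauge $g(l',h')=h'+\gamma l'$ that you left as the ``delicate point'', but your strategy for obtaining it is exactly right.
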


\begin{proof}
Consider $M=\mbS^2 \times \mbS^2$ with coordinates $(x_1,y_1,z_1,x_2,y_2,z_2) \in \R^3 \times \R^3$, positive constants $R_1, R_2>0$ and symplectic form
\begin{align*}
 \omega = - (R_1 \omega_{\mbS^2} \oplus R_2 \omega_{\mbS^2}) =& - R_1 (x_1 \dee y_1 \wedge \dee z_1 + y_1 \dee z_1 \wedge \dee x_1 + z_1 \dee x_1 \wedge \dee y_1) \\&- R_2 (x_2 \dee y_2 \wedge \dee z_2 + y_2 \dee z_2 \wedge \dee x_2 + z_2 \dee x_2 \wedge \dee y_2).
\end{align*} 
For $b \in \R$, define the integrable system $(M,\omega,(L',H'))$ given by
\begin{equation}
\begin{cases}
% \,L'(x_1,y_1,z_1,x_2,y_2,z_2):= R_1 (z_1-1) + R_2 (z_2+1), \\
\,L'(x_1,y_1,z_1,x_2,y_2,z_2):= R_1  z_1 + R_2 z_2 \\
H'(x_1,y_1,z_1,x_2,y_2,z_2):= t (x_1 x_2 + y_1 y_2 + z_1 z_2) + b(z_1-z_2).\\
\end{cases}
\label{relsys} 
\end{equation} 
To prove that this system is isomorphic as a semitoric system to the standard coupled angular momenta $(M,\omega,(L,H))$ we must find a suitable choice of $b$ and a smooth function $g$ such that $\frac{\partial g}{\partial H'}>0$ and $(L,H) = (L',g(L',H'))$ up to adding constants. By choosing 
\begin{equation}
b = \dfrac{(1-t) R_2}{R_1+R_2} \quad \mbox{and}\quad  \gamma = \dfrac{1-t}{R_1+R_2}, 
\label{fort}
\end{equation}
the function $g(l',h') := h' + \gamma l' $ has the desired properties. Denote now by $\omega'$ the symplectic form given by $\omega$ with $R_1$ and $R_2$ exchanged. Consider the smooth transformation $\Xi: (M,\om) \to (M,\om')$ defined by 
$$
\Xi(x_1,y_1,z_1,x_2,y_2,z_2) := (x_2',-y_2',-z_2',x_1',-y_1',-z_1') \quad \text{ and swap $R_1$ and $R_2$}.$$ 

$\Xi$ is a symplectomorphism, i.e.\ $\Xi^* \om' = \om$. If we now apply $\Xi$ to the system \eqref{relsys} we recover the same system but with the sign of $L'$ changed. This system has thus the desired discrete symmetry.
\end{proof}

It is also possible to analyse directly the more symmetric system $(L',H')$ instead of $(L,H)$, but we wanted to stick to the conventions
started by Sadovksii $\&$ Zhilinskii \cite{SZ} and continued in Le Floch $\&$ Pelayo \cite{LFP}. The proposition shows that from the point of view of the semitoric invariants there is no difference between these systems.

From now on we will consider $R_1 < R_2$ unless otherwise stated. Let $(\theta_i,z_i)$ be symplectic cylindrical coordinates on $\mbS^2$. We rewrite the system \eqref{sys1} using symplectic coordinates $(\theta_1,z_1,\theta_2,z_2)$ on $M$ obtaining
\begin{equation*}
\begin{cases}
L(\theta_1,z_1,\theta_2,z_2)\,= R_1(z_1-1) + R_2 (z_2+1)\\
H(\theta_1,z_1,\theta_2,z_2)= (1-t) z_1 + t\sqrt{(1-{z_1}^2)(1-{z_2}^2)}\cos(\theta_1-\theta_2)+ t z_1z_2 +2t-1.\\
\end{cases}
\end{equation*} The symplectic form becomes $\omega = -(R_1\, \dee \theta_1 \wedge \dee z_1 + R_2\, \dee \theta_2 \wedge \dee z_2)$. We now perform the affine coordinate change 
\begin{equation}
\begin{array}{lll}
\qti_1 := -\theta_1 && \qti_2 := \theta_1-\theta_2 \\[0.4cm]
\pti_1 := R_1(z_1-1) + R_2 (z_2+1) && \pti_2 := R_2 (z_2+1)
\end{array}
\label{change}
\end{equation} such that $L(\qti_1,\pti_1,\qti_2,\pti_2)=\pti_1$ and
\begin{align}
H(\qti_1,\pti_1,\qti_2,\pti_2) = \dfrac{1}{R_1 R_2}& \left( R_2 \pti_1(1-2t) + (R_1 t -R_2(1-2t) + \pti_1t + 2R_2 t )\pti_2 - t {\pti_2}^2 \right.\nonumber \\ 
&+ \left. t \sqrt{\pti_2(\pti_2-\pti_1)(\pti_2-2R_2)(\pti_2-\pti_1-2R_1)}\cos( \qti_2) \right).
\end{align} The symplectic form in these coordinates becomes the standard symplectic form $\omega = \dee \qti_1 \wedge \dee \pti_1 + \dee \qti_2 \wedge \dee \pti_2$. In these coordinates, $H$ is independent of $\qti_1$, which means that the function $L(\qti_1,\pti_1,\qti_2,\pti_2)=\pti_1$, is conserved by the flow of $H$ and is thus a first integral of the system. The function $L$ corresponds to the total angular momentum of the system and induces a global $\mbS^1$-action. 

In order to make our notation lighter, we will scale some variables and functions by a factor $R_1$:
\begin{equation}
\begin{array}{lllll}
q_1 := \qti_1 && q_2 := \qti_2 && \mcL := \frac{1}{R_1}L\\[0.4cm]
p_1 :=\frac{1}{R_1} \pti_1 && p_2 :=\frac{1}{R_1} \pti_2 && \mcH := H.
\end{array}
\label{chan2}
\end{equation} This way we can express our problem entirely in terms of $R:= \frac{R_2}{R_1}$. We perform now a symplectic reduction on the level $\mcL=l$, which we will call the \emph{reduced coupled angular momenta} system. Expressed in coordinates $(q_2,p_2)$, we obtain the reduced Hamiltonian
\begin{align}
\mcH_l(q_2,p_2) := \dfrac{1}{R}& \left( R l(1-2t) + ( t -R(1-2t) + lt + 2R t )p_2 - t {p_2}^2 \right.\nonumber \\ 
&+ \left. t \sqrt{p_2(p_2-l)(p_2-2R)(p_2-l-2)}\cos( q_2) \right).
\label{hred}
\end{align} 
\eqref{change} and \eqref{hred} imply the following domains of definition (or physical reagions) of $l$ and $p_2$, namely $l \in [-2,2R]$ and $p_2$ satisfying the inequalities $p_2\geq 0$, $p_2 \geq l$, $p_2 \leq 2R$ and $p_2 \leq l + 2$. Otherwise $\mcH_l$ would not be well-defined. Figure \ref{physical} displays a representation of the physical region. 

\begin{figure}[ht]
 \centering
 \includegraphics[width=10cm]{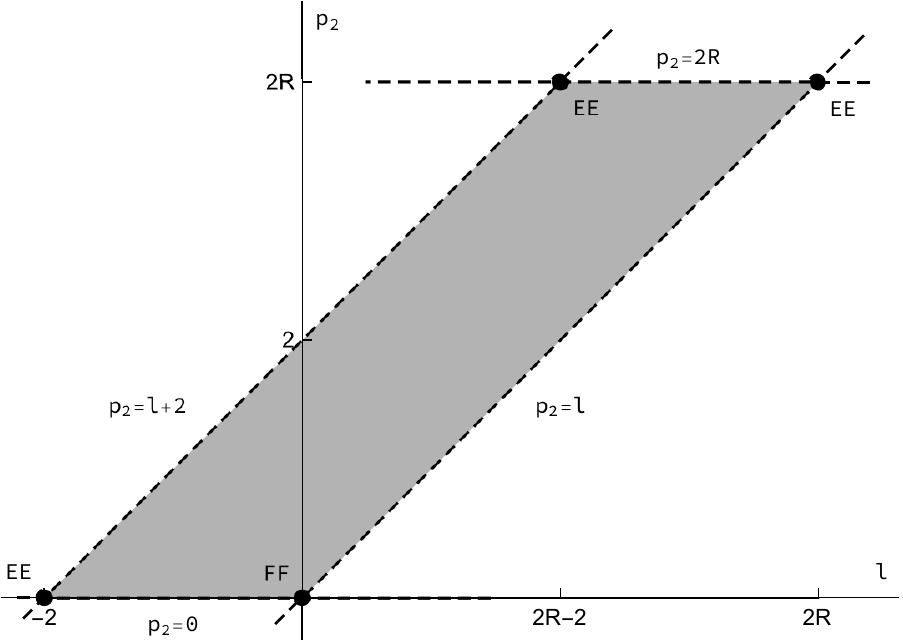}
 \caption{\small Physical region represented in the plane $(l,p_2)$. The fixed points marked as ``EE" are always of elliptic-elliptic type are and the fixed point marked as ``FF" is of focus-focus type for $t \in\ ]t^-,t^+[$ and of elliptic-elliptic type for $t \notin [t^-,t^+]$.
 }
 \label{physical}
\end{figure}

It is useful to introduce the following polynomials in $p_2$:
\begin{align}
A(p_2) &:= \dfrac{1}{R} \left( R l(1-2t) + ( t -R(1-2t) + lt + 2R t )p_2 - t {p_2}^2\right) \label{ABdef} \\[0.15cm] 
B(p_2) &:= 	\dfrac{t^2}{R^2}\, p_2(p_2-l)(p_2-2R)(p_2-l-2)\nonumber 
\end{align} so that we can write
\begin{equation}
\mcH_l(q_2,p_2) = A(p_2) + \sqrt{B(p_2)} \cos(q_2).
\label{red}
\end{equation} 

For a fixed energy value $\mcH_l=h$, the physical motion will happen along the curve $h = A(p_2) + \sqrt{B(p_2)} \cos(q_2)$. We see that $B(p_2)$ vanishes along the edges of the parallelogram in Figure \ref{physical}, thus there we have $\mcH_l(q_2,p_2)=A(p_2)$, which is independent of $q_2$. More precisely, along the four edges we will have
\begin{equation}
	\begin{array}{lll}
		\mcH_l(q_2,0) = A(0)=(1-2t)l,	&& \mcH_l(q_2,2R)= A(2R)=l-2R+2t,\\[0.2cm]
		\mcH_l(q_2,l)=A(l)=\dfrac{t}{R}l, && \mcH_l(q_2,l+2)=A(l+2)=-2+4t-\dfrac{t}{R}(2+l).
	\end{array}
\label{edgeval}
\end{equation} These values will be important because their corresponding level sets correspond to separatrices in the phase portrait of $\mcH_l(q_2,p_2)$.

Let us now define the polynomial 
\begin{equation}
	P(p_2) := B(p_2) - (h-A(p_2))^2.
	\label{Pdef}
\end{equation} 
Note that even though it appears as if $P(p_2)$ has degree 4, in fact it only has degree 3 because the leading order term of $A(p_2)$ squared is equal to the leading order term in $B(p_2)$. Regarding the dependence on $h$ and $l$, clearly $P$ is quadratic in $h$, $A$ is linear in $l$, and $B$ is quadratic in $l$, so that $P$ is quadratic in $l$.

\begin{de} Let $\mathcal{P}(z)$ be a real polynomial in $z$. We say that the (hyper)elliptic curve $w^2=\mathcal{P}(z)$ \emph{avoids a root} at a position $p$ if $-\mathcal{P}(p)$ is a perfect square, that is, if $-\mathcal{P}(p)$ is the square of a polynomial.
\end{de}

In our case, it is obvious from the definition of $P$ that the elliptic curve $s^2=P(p_2)$ avoids the roots $p_2=0,$ $l,$ $2R,$ $l+2$. 
It is helpful to know the positioning of the roots $\zeta_1, \zeta_2, \zeta_3$ of $P$ relative to the positions of the avoided roots. There is always one non-positive root $\zeta_1$ that collides with zero at the focus-focus critical value. If we look at Figure \ref{physical}, in the region without any physical motion (white colour) $P$ has two complex roots. In the physical region (grey colour) $P$ always has two non-negative roots $\zeta_2,$ $\zeta_3$ lying inside the parallelogram. When $(l,h) \not = (0,0)$, these roots are different. More precisely, the order of the roots and the poles is as follows:
\begin{equation}
\begin{cases} 
\zeta_1 < l < 0 < \zeta_2 < \zeta_3 < l + 2 < 2R \qquad &\text{ if }l<0,\\
\zeta_1 < 0 < l < \zeta_2 < \zeta_3 < l + 2 < 2R \qquad &\text{ if } 0 <l<2R-2,\\
\zeta_1 < 0 < l < \zeta_2 < \zeta_3 < 2R<l+2 \qquad &\text{ if }l>2R-2.
\end{cases} 
\label{roots} 
\end{equation}

\subsection{Action, period and rotation number}
We want to define action-angle coordinates for the reduced system. The reduced phase space is given by
$$ -\pi \leq q_2 \leq \pi, \qquad \max\{0,l\} \leq p_2 \leq \min\{l+2,2R\} .$$ 

The first step is to express the action of the reduced system, $\mcI(l,h)$, as a (complete) Abelian integral. However, this is not possible using the standard expression $\frac{1}{2\pi}\oint p\, \dee q$, since it leads to a cubic equation. Therefore, we use the expression $\frac{1}{2\pi}\oint q\, \dee p$, for which integration by parts turns the action into an Abelian integral. The same situation happens in the case of the coupled spin-oscillator in Alonso $\&$ Dullin $\&$ Hohloch \cite{ADH} and the Kovalevskaya top in Dullin $\&$ Richter $\&$ Veselov \cite{DRV}.

More formally, let $\varpi$ be a primitive of the reduced symplectic form $\omega$. We define the action $\mcI=\mcI(l,h)$ of the reduced system by
\begin{equation}
 \mcI(l,h) := \dfrac{1}{2\pi} \oint_{\be_{l,h}} \!\!\varpi = \dfrac{1}{2\pi}\oint_{\be_{l,h}} \!\! q_2\, \dee p_2,
 \label{int1}
\end{equation} where ${\be_{l,h}}$ is the curve defined by $h=\mcH_l(q_2,p_2)$, and thus the integral corresponds to the area enclosed by the curve. The non-scaled version of the action, i.e.\ without applying the scaling \eqref{chan2}, will be $I = R_1 \mcI$.

At this point we must make two remarks. The first one is that since the symplectic form is $\om = -(R_1 \om_{\mbS^2} \oplus R_2 \om_{\mbS^2})$, it has the opposite sign compared to the area form of the spheres and therefore a positive area must correspond to a negative value of the action. The second one is that since the reduced phase space is compact, the closed curve ${\be_{(l,h)}}$ divides it into two parts and therefore we must specify which of the two areas we mean by \eqref{int1}. For later convenience we choose the area that decreases with $h$, or equivalently, we impose that
\begin{equation}
\dfrac{\partial \mcI}{\partial h}(l,h) \geq 0.
\label{derI}
\end{equation} 

\begin{lemm}
The action integral $\mcI(l,h)$ can be expressed as 
\begin{equation}
\mcI(l,h) = 
\begin{cases}
\mfI(l,h) &\text{ if } l<0 \;\;\,\text{ or }\;\; h R > lt\\
\mfI(l,h) + l &\text{ if }l>0 \;\text{ and }\; h R < lt
\end{cases}
\label{i2l}
\end{equation} in a neighbourhood of $(l,h)=(0,0)$, where $\mfI(l,h)$ is the complete elliptic integral
\begin{equation*}
\mfI(l,h):= \dfrac{1}{2\pi} \oint_\beta R_\mcI(p_2) \frac{ \dee p_2}{\sqrt{P(p_2)}},
\end{equation*} defined over the real $\beta$-cycle of the elliptic curve $s^2 =P(p_2)$ and where 
\begin{equation}
\begin{aligned}
R_\mcI(p_2) &= 1-3t+R+\dfrac{t}{R}-l(1-t)+2h+(1-t) p_2 \\
 &+\dfrac{1}{2} \left(\frac{( h - A(l)) l }{ p_2- l} + \frac{(h - A(2 R))2 R}{p_2 - 2 R}+ \frac{ (h - A(l+2))(l + 2) }{p_2 - l - 2} \right).
 \label{RI}
\end{aligned}
\end{equation}
\label{actint}
\end{lemm}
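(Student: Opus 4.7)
The plan is to express $\mcI$ as an abelian integral over the real $\beta$-cycle of the elliptic curve $s^2 = P(p_2)$. I would start from the definition
\[
2\pi \mcI(l,h) = \oint_{\beta_{l,h}} q_2\, dp_2
\]
and use the level-set equation \eqref{red} to solve for $q_2$ as the double-valued function $q_2(p_2) = \pm\arccos\bigl((h - A(p_2))/\sqrt{B(p_2)}\bigr)$ on the interval $[\zeta_2,\zeta_3]$ where $P(p_2)\geq 0$. The endpoints are turning points where $\cos q_2 = \pm 1$, hence $q_2 \in \{0,\pi\}$.

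In the libration case ($l<0$, or $l>0$ with $hR>lt$), the cycle is the union of the two branches glued at the turning points, and the symmetry $q_2 \mapsto -q_2$ between branches reduces the contour integral to $2\int_{\zeta_2}^{\zeta_3}\arccos\bigl((h-A)/\sqrt{B}\bigr)\,dp_2$ (up to a boundary contribution that is either zero or absorbs into a constant). I would then integrate by parts, using
\[
\frac{d}{dp_2}\arccos\!\left(\frac{h-A(p_2)}{\sqrt{B(p_2)}}\right) = -\frac{2A'(p_2)\,B(p_2) + (h-A(p_2))\,B'(p_2)}{2\,B(p_2)\sqrt{P(p_2)}},
\]
so that the integral becomes $\int_{\zeta_2}^{\zeta_3} F(p_2)\,dp_2/\sqrt{P(p_2)}$ for a rational function $F$. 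Since $B(p_2) = (t^2/R^2)\,p_2(p_2-l)(p_2-2R)(p_2-l-2)$ has simple zeros at the four avoided roots of $P$, a partial-fraction decomposition splits $F$ into a polynomial piece plus four simple poles; the residue at $p_2 = 0$ vanishes thanks to the explicit factor of $p_2$ introduced by the integration by parts, leaving exactly the three residues at $l$, $2R$, $l+2$ that appear in $R_\mcI(p_2)$ in \eqref{RI}. This yields $\mcI = \mfI$.

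For the rotation case ($l>0$ and $hR<lt$), I would exploit that the reduced phase space is a topological $2$-sphere and that $h = A(l) = tl/R$ is precisely the value of $\mcH_l$ at the distinguished point $p_2 = l$. When $h$ drops below this threshold, the root $\zeta_2$ crosses the avoided root $l$ and the cycle $\beta_{l,h}$ becomes a rotation winding once around $p_2 = l$. Decomposing such a cycle, up to homology, as the sum of a libration-type cycle and a small loop encircling that pole, the small loop contributes $\frac{1}{2\pi}\oint p_2\,dq_2 = l$ to the action, giving $\mcI = \mfI + l$. The sign convention \eqref{derI} fixes all orientations throughout.

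The main obstacle is algebraic rather than conceptual: verifying that the partial-fraction expansion produced by the integration by parts reproduces the precise coefficients of $R_\mcI(p_2)$. One must check that each residue at a pole $p_* \in \{l, 2R, l+2\}$ simplifies to $\tfrac{1}{2}(h - A(p_*))\,p_*$ and that the polynomial remainder is exactly $1 - 3t + R + t/R - l(1-t) + 2h + (1-t)p_2$. This is a finite but involved symbolic computation, best handled with the computer algebra assistance the authors indicate they rely on throughout the paper.
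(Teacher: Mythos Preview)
Your overall strategy---express $q_2$ via $\arccos$, integrate by parts, and reduce to a rational integrand via partial fractions---coincides with the paper's. The paper carries this out as a direct area computation in the reduced phase space: it writes $\mcI$ as minus the area to the left of the curve, tracks the boundary term from integration by parts together with a rectangular piece $C_L$ below the line $p_2=\zeta_2$, and shows their combination $C_T$ is exactly $0$ or $l$ according to the case split in the statement.

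Where you diverge is in the justification of the $+l$ correction. You invoke a homological picture in which the cycle picks up a small loop around the pole $p_2=l$ when the orbit becomes a rotation. The paper instead proves an auxiliary lemma (Lemma~\ref{aux}) pinning down whether $q_2(\zeta_2)$ equals $0$ or $\pi$, then reads off $C_T$ by explicit bookkeeping. Two points need tightening in your version. First, $\zeta_2$ does not actually cross the avoided root $l$: by the ordering \eqref{roots} one has $l<\zeta_2$ throughout, with equality only on the separatrix $h=A(l)$; the orbit type changes because $q_2(\zeta_2)$ jumps from $0$ to $\pi$, not because of a root crossing. Second, your case split labels everything with $l<0$ as ``libration'', but Type~II (rotation) orbits also occur for $l<0$ when $h<A(0)=(1-2t)l$. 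Your topological argument still yields the correct answer there---the rotation then winds around the pole at $p_2=0$, whose contribution $\tfrac12(h-A(0))\cdot 0$ vanishes---but this case should be stated explicitly rather than swept into the libration discussion.
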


The proof needs a little bit of preparation.

\begin{re} 
In \eqref{i2l}, we need to add $l$, related to the $\mbS^1$-action of the system, in order to make the action $\mcI(l,h)$ continuous at the focus-focus critical value $(0,0)$. This is a consequence of the monodromy of the system (cf.\ Zung in \cite{Z2} and Matveev in \cite{Ma}). Another observation is that the rational terms in \eqref{RI} change sign precisely when $h$ crosses the special values in \eqref{edgeval}.
\end{re}

Before proving Lemma \ref{actint}, let us analyse the phase portrait of $\mcH_l(q_2,p_2)$, displayed in Figure \ref{hami}. If we regard the phase space as a finite cylinder, then there are three types of orbits: 
\begin{itemize}
	\item Type I: Orbits that are homotopic to a point and cross the line $q_2=0$, which are represented by full lines.
	\item Type II: Orbits that are not homotopic to a point, which are represented by dashed lines.
	\item Type III: Orbits that are homotopic to a point and cross the line $q_2=\pi$, which are represented by dotted lines.
\end{itemize}
\begin{figure}[ht]
 \centering
\subfloat[$l<0$]{\includegraphics[width=4.5cm]{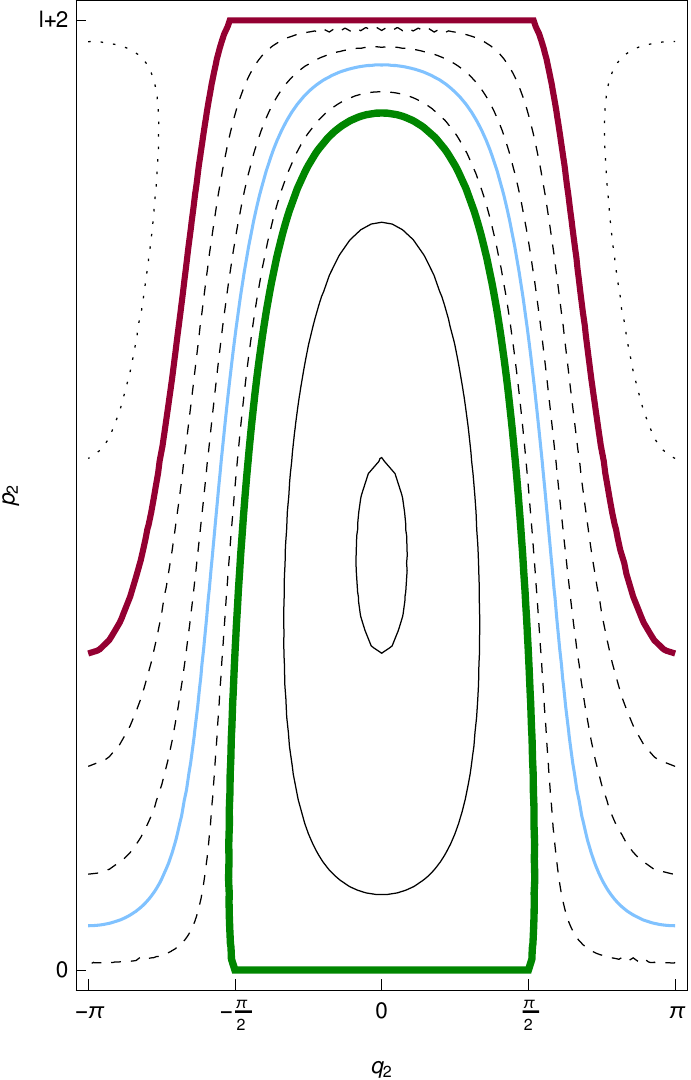} } \hfill
\subfloat[$l=0$]{\includegraphics[width=4.5cm]{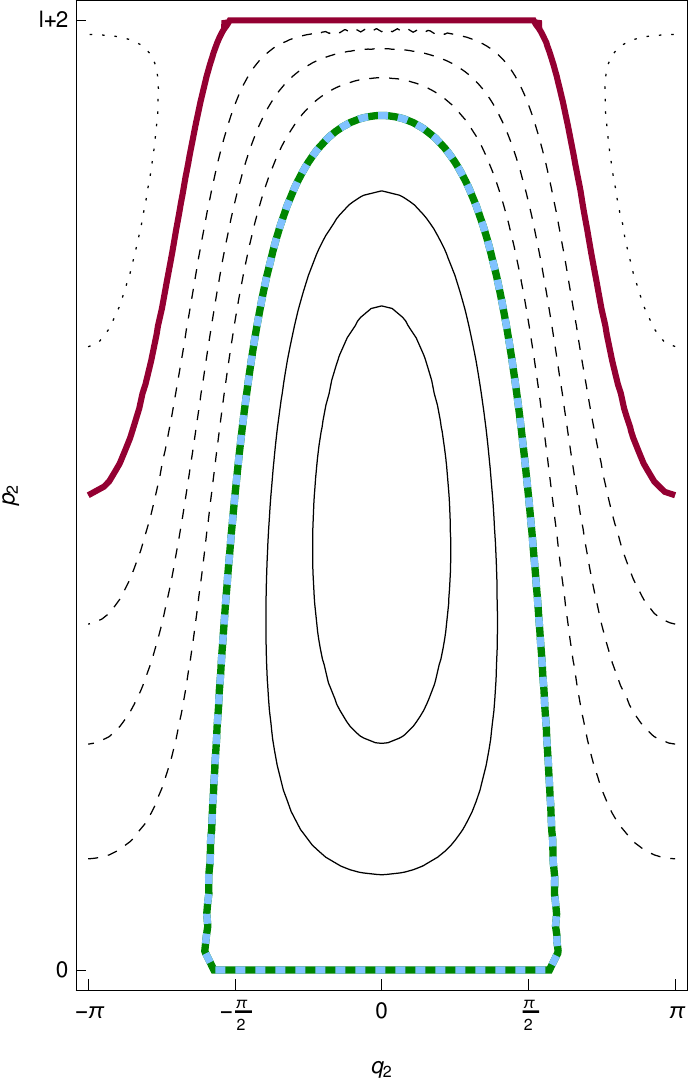}} \hfill
\subfloat[$l>0$]{\includegraphics[width=4.5cm]{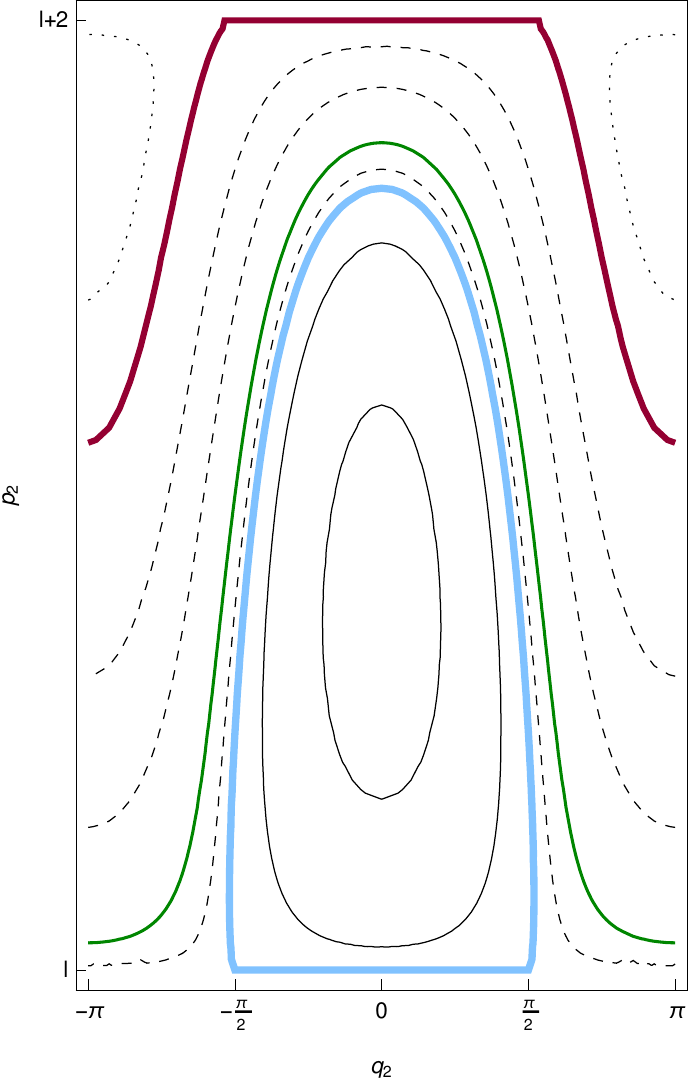}\label{hami03}}
\caption{\small Level sets of the function $\mcH_l(q_2,p_2)$ for different values of $l$ close to $0$. The full lines represent orbits of type I, the dashed lines represent orbits of type II and the dotted lines represent orbits of type III. The level set of $h=A(l)=\frac{t}{R}l$ is in light blue, the level set of $h=A(0)=(1-2t)l$ is in green and the level set of $h=A(l+2)=-2 +4t-\frac{t}{R}(2+l)$ is in dark red. The special orbits that mark the change between orbits of different types are thickened, so $h=A(\max\{0,l\})$ marks the change from type I to type II and $h=A(\min\{l+2,2R\})$ from II to III. For values $l<0$ we see that $h=A(\max\{0,l\})=A(0)$ in green is thickened while $h=A(l)$ in light blue is a normal orbit. For $l=0$, these two orbits coincide. For $l>0$, the orbit $h=A(\max\{0,l\})=A(l)$ in light blue is thickened and $h=A(0)$ in green becomes a normal orbit. A similar crossing of special orbits happens at $l=2R-2$ between $h=A(l+2)$ and $h=A(2R)$.}
 \label{hami}
\end{figure}

The regions containing the different types of orbits are limited by separatrices corresponding to level sets in \eqref{edgeval}. More concretely, the orbits of type I and II are separated by the level set $A(\max\{0,l\})$ and the orbits of type II and III are separated by the level set $A(\min\{l+2,2R\})$, so the changes happen at the values $l=0$ and $l=2R-2$, where the order of the poles \eqref{roots} changes. In Figure \ref{hami} the change at $l=0$ is illustrated. 

\begin{lemm}
Let $\ze_1,\ze_2,\ze_3$ be the roots of $P(p_2)$ as in \eqref{roots}. Then,
$$\dfrac{h-A(\ze_2)}{\sqrt{B(\ze_2)}} = 
\begin{cases}
\;\;\,1 &\text{ if } h>A(\max\{0,l\})\\
-1 &\text{ if } h<A(\max\{0,l\})
\end{cases}$$ and
$$\dfrac{h-A(\ze_3)}{\sqrt{B(\ze_3)}} = 
\begin{cases}
\;\;\,1 &\text{ if } h>A(\min\{l+2,2R\})\\
-1 &\text{ if } h<A(\min\{l+2,2R\}).
\end{cases}$$
\label{aux}
\end{lemm}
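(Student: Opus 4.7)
My approach is to identify $\ze_2$ and $\ze_3$ with the lower and upper turning points (extrema of $p_2$) of the reduced orbit $\{\mcH_l = h\}$, and then read off the sign of $\cos(q_2)$ at those turning points from the orbit classification in Figure~\ref{hami}.

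First, since $\ze_i$ is a real root of $P = B - (h - A)^2$, we immediately have $(h - A(\ze_i))^2 = B(\ze_i)$, so the ratio $\tfrac{h - A(\ze_i)}{\sqrt{B(\ze_i)}}$ equals $\pm 1$ whenever $B(\ze_i) > 0$. To pin down the sign, I would use Hamilton's equation $\dot p_2 = -\partial \mcH_l/\partial q_2 = \sqrt{B(p_2)}\sin(q_2)$: an orbit attains a $p_2$-extremum only at $q_2 \in \{0, \pi\}$. Substituting such a turning point into $\mcH_l = A + \sqrt{B}\cos(q_2) = h$ gives $h - A(\ze_i) = \cos(q_2)\sqrt{B(\ze_i)}$, so the ratio is exactly $\cos(q_2)$, i.e.\ $+1$ for a turning point on the axis $\{q_2 = 0\}$ and $-1$ for one on $\{q_2 = \pi\}$.

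Second, I would use the symmetry $q_2 \mapsto -q_2$ of $\mcH_l$ together with the three orbit types of Figure~\ref{hami} to localise the sign. Type I orbits are contractible loops crossing $\{q_2 = 0\}$ but not $\{q_2 = \pi\}$, so both turning points lie on $\{q_2 = 0\}$ and both ratios equal $+1$. Type III orbits by the same argument give ratio $-1$ at both endpoints. Type II orbits wrap the cylinder and therefore meet both axes, with the lower turning point $\ze_2$ on $\{q_2 = \pi\}$ and the upper $\ze_3$ on $\{q_2 = 0\}$. The separatrices recorded in \eqref{edgeval} partition the $(l,h)$-plane so that type I occurs for $h > A(\max\{0,l\})$, type II for $A(\min\{l+2,2R\}) < h < A(\max\{0,l\})$, and type III for $h < A(\min\{l+2,2R\})$, which gives precisely the dichotomies stated in the lemma.

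As a consistency check I would verify by continuity that the ratio can flip only when $\ze_i$ collides with an avoided root, at which point $B(\ze_i) = 0$ and $P(\ze_i) = 0$ force $h = A(\ze_i)$ to equal one of the edge values \eqref{edgeval}. The ordering \eqref{roots} shows that $\ze_2$ can approach only the pole $\max\{0, l\}$ (it is pinned between that pole and $\ze_3$), and similarly $\ze_3$ can approach only $\min\{l+2, 2R\}$, so the sign-flip loci are exactly $h = A(\max\{0, l\})$ and $h = A(\min\{l+2, 2R\})$, consistent with the case distinctions. The main obstacle is to justify rigorously that, when $h$ crosses such a separatrix, the associated turning-point axis really switches from $\{q_2 = 0\}$ to $\{q_2 = \pi\}$ rather than remaining on the same side; this follows from a local analysis of $P$ near the degenerate root, which shows that $\ze_i$ moves from one side of the avoided pole to the other as $h$ crosses the critical value, forcing $\cos(q_2)$ to change sign.
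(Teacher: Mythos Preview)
Your argument is correct and essentially identical to the paper's: both identify the roots $\ze_2,\ze_3$ as the turning points of the reduced orbit via $P=B-(h-A)^2=0$, read off the sign of $(h-A(\ze_i))/\sqrt{B(\ze_i)}$ as $\cos q_2$ at that turning point, and then use the orbit classification of Figure~\ref{hami} together with the separatrix values \eqref{edgeval} to obtain the dichotomies. One small inaccuracy in your supplementary consistency check: by \eqref{roots} the root $\ze_2$ never crosses the avoided pole $\max\{0,l\}$ but only touches it at the separatrix and recedes; the sign flip comes from the orbit changing type (the lower turning point jumping from the axis $q_2=0$ to $q_2=\pi$) rather than from $\ze_2$ passing to the other side of the pole.
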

\begin{proof}
Consider a curve $\ga_{(l,h)}$ for some $(l,h)$. Since $\mcH_l(q_2,p_2)$ is an even function of $q_2$, it is enough to consider non-negative values of $q_2$. Equation \eqref{red} allows us to write $q_2$ as a function of $p_2$:
\begin{equation}
q_2(p_2; l,h) = \arccos \left( \dfrac{h-A(p_2)}{\sqrt{B(p_2)}}\right).
\label{arcco}
\end{equation} The function $q_2(p_2; l,h)$ is defined for $\ze_2 \leq p_2 \leq \ze_3$, since the value of $p_2$ must be in the physical region and the intersection points of $q_2(p_2; l,h)$ with the lines $q_2=\pm \pi, 0$ correspond to the roots of $P(p_2)$. More precisely, if $q_2(p_2; l,h)=\pm \pi, 0$, it means that 
$$\left( \dfrac{h-A(p_2)}{\sqrt{B(p_2)}}\right)^2=1,$$ which implies
$$P(p_2) = B(p_2)-(h-A(p_2))^2=0,$$
so $p_2$ must be a root of $P(p_2)$. Looking at Figure \ref{hami}, we see that 
$$\dfrac{h-A(\ze_2)}{\sqrt{B(\ze_2)}}=1$$ if the orbit is of the first type, otherwise it will be $-1$. Similarly, 
$$\dfrac{h-A(\ze_3)}{\sqrt{B(\ze_3)}}=1$$ if the orbit is of the first or the second type, otherwise it will be $-1$. Finally, the separatrices that delimit the regions with different types of orbits are given by $h=A(\max\{0,l\})$ and $h=A(\min\{l+2,2R\})$, which gives the desired result. 
\end{proof}

We now prove Lemma \ref{actint}.
\begin{proof}[Proof of Lemma \ref{actint}]
Let us fix some values $(l,h)$ close to $(0,0)$ and consider the curve $\be_{l,h}$ defined by $h=\mcH_l(q_2,p_2)$. Because of the symmetry $\mcH_l(-q_2,p_2)=\mcH_l(q_2,p_2)$ we can restrict ourselves to the non-negative values of $q_2$ and, moreover, multiply by 2. If we look at Figure \ref{hami2}, we need to calculate the area ``to the left" of the curve, since we want condition \eqref{derI} to hold and the grey area decreases with $h$. Using the expression \eqref{arcco}, we write the action integral $\mcI(l,h)$ as
$$\mcI(l,h) = \dfrac{1}{2\pi}\oint_{\be_{l,h}} \!\! q_2\, \dee p_2 = -\left( C_L(l,h) + \dfrac{1}{\pi} \int_{\ze_2}^{\ze_3} \arccos \left(\dfrac{h-A(p_2)}{\sqrt{B(p_2)}} \right) \dee p_2\right),
$$ where $C_L(l,h)$ is the area below the line $p_2=\ze_2$, more precisely
$$ C_L(l,h)= \begin{cases}
0 &\text{ if } h>A(\max\{0,l\})\\
\ze_2 &\text{ if } l<0 \;\text{ and }\; h<l(1-2t)=A(0)\\
\ze_2-l&\text{ if } l>0 \;\text{ and }\; Rh<lt=RA(l).
\end{cases}$$ 
We used the fact that for $(l,h)$ close enough to $(0,0)$ we only encounter orbits of type I and II and therefore 
$$\dfrac{h-A(\ze_3)}{\sqrt{B(\ze_3)}}=1.$$ Integrating by parts we obtain
\begin{align*}
\mcI(l,h) =& - C_L(l,h) - \dfrac{p_2}{\pi} \left. \arccos \left(\dfrac{h-A(p_2)}{\sqrt{B(p_2)}} \right) \right|_{\ze_2}^{\ze_3} +\dfrac{1}{\pi} \int_{\ze_2}^{\ze_3} p_2 \dfrac{d}{dp_2} \left( \arccos \left(\dfrac{h-A(p_2)}{\sqrt{B(p_2)}} \right)\right) \dee p_2 \\=& \,
C_T(l,h) +\dfrac{1}{\pi} \int_{\ze_2}^{\ze_3} R_\mcI(p_2) \frac{ \dee p_2}{\sqrt{P(p_2)}}= C_T(l,h) +\dfrac{1}{2\pi} \oint_\beta R_\mcI(p_2) \frac{ \dee p_2}{\sqrt{P(p_2)}},
\end{align*} where 
\begin{align*}R_\mcI(p_2) &= 1-3t+R+\dfrac{t}{R}-l(1-t)+2h+(1-t) p_2 \\
 &+\dfrac{1}{2} \left(\frac{( h - A(l)) l }{ p_2- l} + \frac{(h - A(2 R))2 R}{p_2 - 2 R}+ \frac{ (h - A(l+2))(l + 2) }{p_2 - l - 2} \right)
\end{align*} and 
\begin{align*}
C_T(l,h) &= -C_L(l,h) - \dfrac{p_2}{\pi} \left. \arccos \left(\dfrac{h-A(p_2)}{\sqrt{B(p_2)}} \right) \right|_{\ze_2}^{\ze_3} \\&= \begin{cases}
0 &\text{ if } h>A(\max\{0,l\})\\
-\ze_2 +\ze_2&\text{ if } l<0 \;\text{ and }\; h<l(1-2t)=A(0)\\
-\ze_2+l+\ze_2 &\text{ if } l>0 \;\text{ and }\; Rh<lt=RA(l)
\end{cases} \\
&= \begin{cases}
0 &\text{ if } l<0\;\; \text{ or }\;\; h >A(\max\{0,l\})\\
l&\text{ if } l>0 \;\text{ and }\; h<l(1-2t)=A(0).
\end{cases}
\end{align*}

\end{proof}
\begin{figure}[ht]
 \centering
\subfloat[$h<A(\max\{0,l\})$]{\includegraphics[width=5cm]{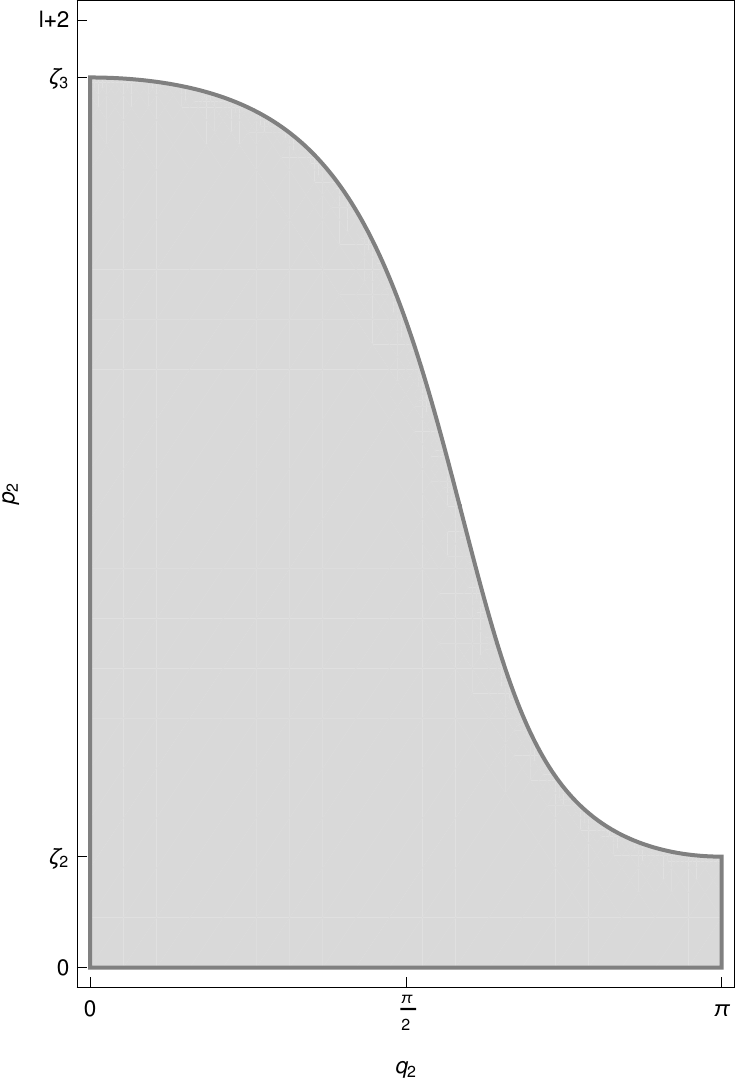}} \hspace{2cm}
\subfloat[$h>A(\max\{0,l\})$]{\includegraphics[width=5cm]{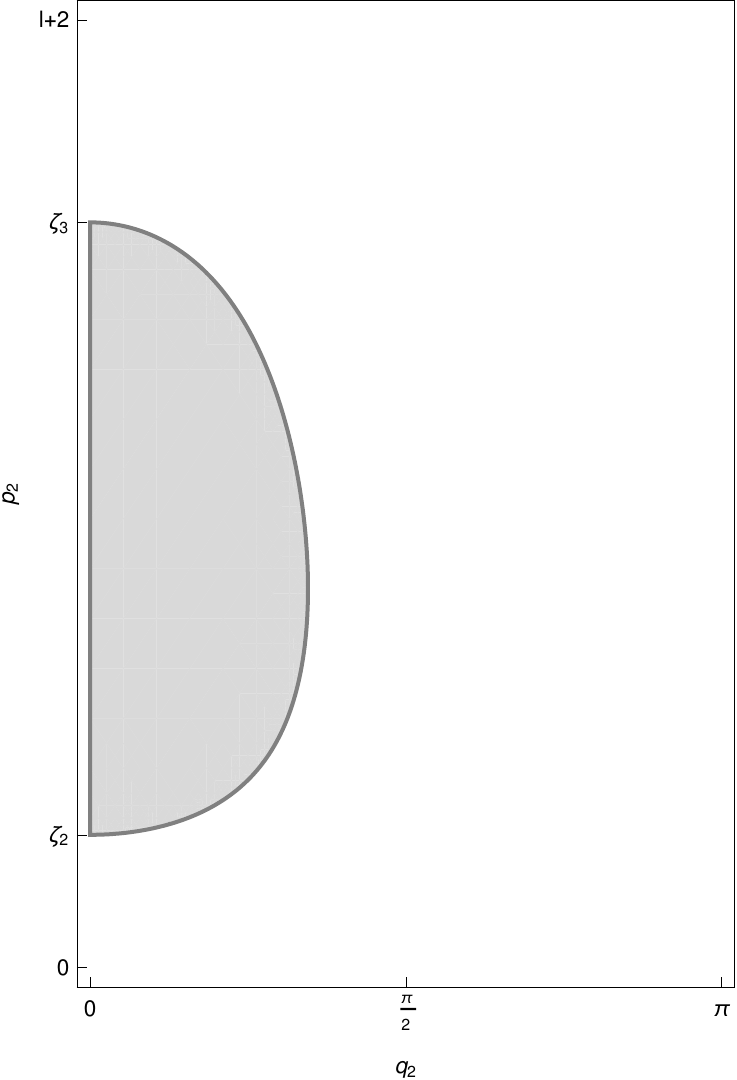}}
\caption{\small Area represented by the action integral $\mcI(l,h)$ for different values of $h$, considering only values $0\leq q_2 \leq \pi$. In the case $h<A(\max\{0,l\})$ we have orbits of type II, i.e.\ $q_2(\ze_2,l,h)=\pi$, so there is an area below the curve, $C_L(l,h)=\ze_2-\max\{0,l\}$. In the case $h>A(\max\{0,l\})$ we have orbits of type I, i.e.\ $q_2(\ze_2,l,h)=0$, so there is no area below the curve, $C_L(l,h)=0$. }
 \label{hami2}
\end{figure}

From now on we will assume that $l>0$ and $hR>lt$. This situation corresponds to the orbits of type I in Figure \ref{hami03}. We are free to make this assumption since the function $S(w)$ from Definition \ref{defS} is smooth, so a particular direction does not affect the Taylor series invariant. Under these conditions, the action integral $\mcI(l,h)=\mfI(l,h)$ is a complete Abelian integral over the real cycle $\beta$ of the elliptic curve $s^2 = P(p_2)$. In a similar fashion, we define the \emph{imaginary action} by integrating the action integral along the vanishing $\alpha$-cycle of the same elliptic curve
\begin{equation}
\mcJ(l,h) := \dfrac{1}{2\pi i} \oint_{\alpha_{l,h}} R_\mcI(p_2) \frac{ \dee p_2}{\sqrt{P(p_2)}}.
\label{imag}
\end{equation} The non-scaled version of the imaginary action, i.e.\ without applying the scaling \eqref{chan2}, will be $J = R_1 \mcJ$.

Other quantities of interest are the period of the reduced system, which we will call \emph{reduced period} $\mcT(l,h)$, and the \emph{rotation number} $\mcW(l,h)$, defined as
\begin{equation}
\mcT(l,h) := 2\pi \dfrac{\partial \mcI}{\partial h},\qquad \mcW(l,h) := -\dfrac{\partial \mcI}{\partial l}.
\label{TW}
\end{equation}

\begin{lemm}
\label{lemTW1}
The reduced period $\mcT(l,h)$ and the rotation number $\mcW(l,h)$ are complete elliptic integrals
\begin{equation}
\mcT(l,h) = \oint_{\beta_{l,h}} \frac{ \dee p_2}{\sqrt{P(p_2)}},\qquad \mcW(l,h) = \dfrac{1}{2\pi} \oint_{\beta_{l,h}} R_\mcW(p_2) \frac{ \dee p_2}{\sqrt{P(p_2)}}
 \label{perrot}
\end{equation}
over the elliptic curve $s^2 = P(p_2)$, where
$$R_\mcW(p_2) = -\dfrac{1}{2} \left( \dfrac{h-A(l)}{p_2-l} + \dfrac{h-A(l+2)}{p_2-l-2} \right).$$
\end{lemm}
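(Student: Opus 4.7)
My plan is to derive both formulas directly from the reduced Hamiltonian equations of motion, which is cleaner than differentiating the action integral of Lemma \ref{actint} under the integral sign (although that alternative route leads to the same answer).

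For $\mcT$, the level-set equation $h = A(p_2) + \sqrt{B(p_2)}\cos q_2$ in \eqref{red} gives $\cos q_2 = (h-A(p_2))/\sqrt{B(p_2)}$, hence $\sin^2 q_2 = P(p_2)/B(p_2)$ by the definition \eqref{Pdef} of $P$. Hamilton's equation on the reduced phase space then collapses to
\begin{equation*}
\dot p_2 \;=\; -\frac{\partial \mcH_l}{\partial q_2} \;=\; \sqrt{B(p_2)}\,\sin q_2 \;=\; \pm\sqrt{P(p_2)}.
\end{equation*}
During one reduced period $p_2$ oscillates between $\ze_2$ and $\ze_3$, with $\dot p_2$ changing sign at each turning point, so integrating $\dee t = \dee p_2/\dot p_2$ yields
\begin{equation*}
\mcT \;=\; 2\int_{\ze_2}^{\ze_3}\frac{\dee p_2}{\sqrt{P(p_2)}} \;=\; \oint_{\be_{l,h}}\frac{\dee p_2}{\sqrt{P(p_2)}},
\end{equation*}
where the two real half-cycles are reassembled into the contour integral over the real $\be$-cycle of $s^2 = P(p_2)$.

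For $\mcW$ I would use that on the full (non-reduced) phase space $\dot q_1 = \partial\mcH/\partial p_1 = \partial_l\mcH_l(q_2,p_2)$, since $\mcH$ is independent of $q_1$ and $p_1 = l$. The standard action-angle identity $\omega_1/\omega_2 = -\partial_l\mcI$ implies that the net advance of $q_1$ over one reduced period equals $2\pi\mcW$, so combining with $\dee t = \dee p_2/\sqrt{P}$ from the previous step,
\begin{equation*}
2\pi\mcW \;=\; \oint_{\be_{l,h}} \partial_l\mcH_l\cdot\frac{\dee p_2}{\sqrt{P(p_2)}}.
\end{equation*}
Replacing $\cos q_2$ by $(h-A)/\sqrt B$ along the trajectory in $\partial_l\mcH_l = \partial_l A + (\partial_l B/(2\sqrt B))\cos q_2$, and exploiting the logarithmic derivative
\begin{equation*}
\frac{\partial_l B}{B} \;=\; -\frac{1}{p_2-l}-\frac{1}{p_2-l-2}
\end{equation*}
that is immediate from the product form of $B$ in \eqref{ABdef}, one obtains
\begin{equation*}
\partial_l\mcH_l \;=\; \partial_l A(p_2) \;-\; \frac{h-A(p_2)}{2}\left(\frac{1}{p_2-l}+\frac{1}{p_2-l-2}\right).
\end{equation*}
Decomposing $h - A(p_2) = (h - A(l)) - (A(p_2) - A(l))$, and analogously at $l+2$, separates this expression into exactly $R_\mcW(p_2)$ plus a polynomial remainder: because $A$ is quadratic in $p_2$, each quotient $(A(p_2)-A(l))/(p_2-l)$ is linear in $p_2$.

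The main obstacle is the final bookkeeping step: the polynomial remainder combined with $\partial_l A(p_2)$ must be shown to integrate to zero over the $\be$-cycle against $\dee p_2/\sqrt{P}$, so that only the $R_\mcW$-contribution survives. This cancellation relies on the exact-form identity $\dee(f/\sqrt{P}) = (f'/\sqrt{P} - fP'/(2P^{3/2}))\,\dee p_2$ whose $\be$-cycle integral vanishes, together with the special values \eqref{edgeval} that relate $A(l)$, $A(l+2)$, $A(0)$, $A(2R)$ to the separatrix levels and hence to the coefficients of $P$. Once this algebraic identity is verified, the stated expression for $\mcW$ follows.
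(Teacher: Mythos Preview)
Your derivation of $\mcT$ is correct and is essentially what the paper's one-line ``follows immediately'' amounts to: differentiating the arccos form of the action (or equivalently separating variables in $\dot p_2 = \pm\sqrt{P(p_2)}$) gives $\mcT = \oint_\beta \dee p_2/\sqrt{P}$ directly.

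For $\mcW$, however, your final step does not go through. Carrying out the decomposition you describe, one finds that the polynomial remainder is exactly the constant
\[
\partial_l A(p_2) \;+\; \tfrac12\left(\frac{A(p_2)-A(l)}{p_2-l}+\frac{A(p_2)-A(l+2)}{p_2-l-2}\right)
\;=\; (1-2t) + (4t-1) \;=\; 2t,
\]
using $A(p_2)=\tfrac{1}{R}\bigl(c_0+c_1p_2-tp_2^2\bigr)$ with $c_1=t-R(1-2t)+lt+2Rt$. But $\dee p_2/\sqrt{P}$ is the holomorphic differential of the first kind on the genus-one curve $s^2=P(p_2)$ (since $P$ is cubic), and its $\beta$-period is precisely $\mcT\neq 0$. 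No exact-form identity $\dee(f/\sqrt{P})$ can produce a pure multiple of $\dee p_2/\sqrt{P}$, and the special values \eqref{edgeval} play no role here. Hence
\[
\frac{1}{2\pi}\oint_\beta \partial_l\mcH_l\,\frac{\dee p_2}{\sqrt{P}}
\;=\;\frac{1}{2\pi}\oint_\beta R_\mcW\,\frac{\dee p_2}{\sqrt{P}}\;+\;\frac{t}{\pi}\,\mcT,
\]
so your route does not land on the stated $R_\mcW$ alone. Either the remainder must be carried along (and the Lemma's $R_\mcW$ is short by the constant $2t$), or the argument needs a different mechanism; the cancellation you invoke simply does not occur.
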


This follows immediately from the definition of the reduced period and rotation number.

Similarly to the definition of the imaginary action in \eqref{imag}, we define the \emph{imaginary period} $T^\alpha(l,h)$ and the \emph{imaginary rotation number} $T^\alpha(l,h)$ integrating \eqref{perrot} along the imaginary cycle $\alpha$ of the elliptic curve $s^2 = P(p_2)$, more precisely
\begin{equation}
\mcT^\alpha(l,h) := 2\pi \dfrac{\partial \mcJ}{\partial h},\qquad \mcW^\alpha(l,h) := -\dfrac{\partial \mcJ}{\partial l}.
\label{TWal}
\end{equation}

\subsection{Computation of the Taylor series invariant}
We want to calculate the Taylor series symplectic invariant using \eqref{inv5}. One possibility would be to follow the method used by the authors to calculate the Taylor series invariant for the coupled spin-oscillator in \cite{ADH}. But if we look at the structure of the elliptic integral of the action in Lemma \ref{actint}, we see that it would involve expanding a complete elliptic integral of first kind, another of second kind and three of third kind, which makes calculations quite involved. In this section we will introduce another method based on computing partial derivatives of the symplectic invariant. 

The calculations in this subsection involve series expansions with polynomial coefficients depending on several parameters. In order to simplify notation, we use a short notation for these coefficients throughout this subsection. The full expressions are given in Appendix \ref{apcoef}. All computations are done with \emph{Mathematica}.

So far we have worked with the action, the reduced period and the rotation number as functions of $l$ and $h$. However, the Taylor series invariant is defined as a function of the value of the semi-global function $\Phi=(\Phi_1,\Phi_2)$, which is usually represented as a complex variable $w=w_1 + i w_2$ (see \S \ref{sec:taylor} for details). The components $\Phi_1$ and $\Phi_2$ are the semi-global extensions of $Q_1$ and $Q_2$ respectively. The first component coincides with the 
$\mbS^1$-action, so we can write $w_1 \equiv l$.

In order to calculate the value of the second component we will make use of a Birkhoff normal form for singularities of focus-focus type. As originally observed by Dullin \cite{Du}, this Birkhoff normal form coincides with the imaginary action $\mcJ(l,h)$ defined in \eqref{imag}, so we can write $w_2 \equiv j$. The imaginary action is the integral of the action along the imaginary cycle of the elliptic curve $s^2=P(p_2)$. 
Since the cycle vanishes as we approach the focus-focus singularity, we use the residue theorem to expand the value of $\mcJ(l,h)$. Inverting $j=\mcJ(l,h)$, we will obtain the Birkhoff normal form as $h=\mcB(l,j)$ and this is used to express our functions in terms of $l$ and $j$. The computation of the Birkhoff normal form can be done in the traditional iterative way, as e.g.\ described by Dullin \cite{Du}. However, the present method using residue calculus is more efficient and convenient.

\begin{lemm}\label{BNF}
The Birkhoff normal form of the focus-focus singularity of the coupled angular momenta is
\begin{equation}
\begin{aligned}
%a_{10} =& R + t - 2 R t \\
%a_{01} =& \ra \\
\mcB(l,j) =& \dfrac{1}{2 R} ( ( R + t - 2 R t  ) l + r_A j)\varepsilon + \dfrac{t}{8 R \ra^2} (a_{20} l^2 + a_{11} lj + a_{02} j^2)\varepsilon^2  \\
&+ \dfrac{t^2(-1+t)}{8 \ra^6} (a_{30} l^3 + a_{21} l^2j + a_{12}lj^2+a_{03}j^3)\varepsilon^3+\mcO (\varepsilon^4),
\label{polB}
\end{aligned}
\end{equation}
where $\ra$ is the determinant square root from Definition \ref{sqr} and $a_{mn}$ are polynomial coefficients in $\ra$, $R$, $t$ listed in Appendix \ref{apcoef}.
\end{lemm}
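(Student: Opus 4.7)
The strategy is the one sketched in the paragraph preceding the lemma: compute the imaginary action $\mcJ(l,h)$ as a power series around the focus-focus critical value $(0,0)$ by evaluating the vanishing-cycle integral via residues, and then invert the relation $j = \mcJ(l,h)$ to obtain $h = \mcB(l,j)$. That $\mcB$ so defined is genuinely the Birkhoff normal form of the focus-focus singularity is the observation of Dullin \cite{Du}, which we simply quote. The parameter $\varepsilon$ in \eqref{polB} is a bookkeeping device for the joint degree in $(l,j)$.

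The core of the computation is the residue evaluation. At $(l,h) = (0,0)$, the cubic $P(p_2)$ from \eqref{Pdef} has a double root at $p_2 = 0$, as $A(0)=0$ when $l=0$ and $B(p_2)$ carries a factor of $p_2(p_2-l)$. For $(l,h)$ near $(0,0)$ the double root splits into two simple roots $\zeta_1,\zeta_2$ in a small neighbourhood of the origin, and the vanishing $\alpha$-cycle encircles them. Factoring $P(p_2) = (p_2-\zeta_1)(p_2-\zeta_2)\,\Qti(p_2)$ with $\Qti$ the remaining linear factor (non-vanishing at $0$), I would parametrise
\begin{equation*}
p_2 = \tfrac12(\zeta_1+\zeta_2) + \tfrac12(\zeta_2-\zeta_1)\sin\theta,\qquad \theta\in[0,2\pi],
\end{equation*}
for a suitable branch choice, so that $\sqrt{(p_2-\zeta_1)(p_2-\zeta_2)}^{-1}\dee p_2 = \dee\theta/i$. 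The integral \eqref{imag} then collapses to
\begin{equation*}
\mcJ(l,h) = \frac{1}{2\pi}\int_0^{2\pi} \frac{R_\mcI(p_2(\theta))}{\sqrt{\Qti(p_2(\theta))}}\,\dee\theta,
\end{equation*}
which is analytic in $(l,h)$ near $(0,0)$ and can be expanded by Taylor-expanding the integrand in $\sin\theta$ and using $\int_0^{2\pi}\sin^{2k}\theta\,\dee\theta = 2\pi\binom{2k}{k}4^{-k}$ together with the vanishing of the odd moments.

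Carrying this out produces $j = c_{10}\,l + c_{01}\,h + \mcO(|l,h|^2)$ with $c_{01} = 2R/\ra$ and $c_{10} = -(R+t-2Rt)/\ra$, the factor $\ra$ entering through $\sqrt{\Qti(0)}$ (a direct check shows $\Qti(0)$ is, up to a $t,R$-factor, proportional to $\ra^2$). Since $c_{01}\neq 0$ on the focus-focus interval $t\in\ ]t^-,t^+[$, the implicit function theorem lets us solve for $h$ and yields
\begin{equation*}
\mcB(l,j) = \frac{(R+t-2Rt)\,l + \ra\,j}{2R} + \mcO(\varepsilon^2),
\end{equation*}
matching the linear part of \eqref{polB}. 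The quadratic and cubic coefficients $a_{mn}$ are obtained by pushing the expansion of $\mcJ$ to third order in $(l,h)$ and inverting the series term by term; the powers of $\ra$ appearing in the denominators arise because each additional derivative contributes another factor of $\Qti(0)^{-1}\sim\ra^{-2}$.

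The main obstacle is not conceptual but algebraic: the cubic-order coefficients are rational functions of $R$, $t$ and $\ra$ with lengthy numerators. I would carry out the expansion and inversion symbolically in \emph{Mathematica}, record the resulting $a_{mn}$ in Appendix \ref{apcoef}, and only keep track by hand of the sign conventions: the branch of $\sqrt{P(p_2)}$ and the orientation of the vanishing cycle must be chosen so that the signs of the linear coefficients in \eqref{polB} agree with the convention \eqref{derI} used for the real action $\mcI$.
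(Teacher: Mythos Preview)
Your proposal is correct and follows the same overall strategy as the paper: compute the expansion of the imaginary action $\mcJ(l,h)$ from the vanishing-cycle integral \eqref{imag}, then invert $j=\mcJ(l,h)$ term by term to get $h=\mcB(l,j)$.

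The only difference is in how the vanishing-cycle integral is actually evaluated. You parametrise the $\alpha$-cycle using the colliding roots $\zeta_1,\zeta_2$ and a trigonometric substitution, then expand in $\sin\theta$. The paper instead substitutes $l\mapsto l\varepsilon$, $h\mapsto h\varepsilon$ directly into the integrand $R_\mcI(p_2)/\sqrt{P(p_2)}$ and expands in $\varepsilon$ for fixed $p_2\neq 0$; since $P(p_2)|_{\varepsilon=0}=p_2^2\rb^2/R^2$ with $\rb|_{p_2=0}=\ra\neq 0$, each order of the expansion is a rational function of $p_2$ with a pole only at $p_2=0$ inside the shrinking cycle, and the ordinary residue there gives the coefficients $c_{mn}$ of $\mcJ$ (their equation \eqref{polJ}). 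This bypasses any explicit knowledge of $\zeta_1,\zeta_2$ (and hence of $\rg$), whereas your parametrisation formally needs them---though, as you implicitly use, only symmetric combinations of $\zeta_1,\zeta_2$ survive the $\theta$-integration, so Vieta would suffice. Both routes land on the same series; the paper's is just a shade more direct.
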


In Lemma \ref{BNF} and also in the following, we use the following convention for the indices of polynomial coefficients: the index $m$ refers to the degree of the first variable, the index $n$ refers to the degree of the second variable and the index $k$ refers to the third variable, if present.

\begin{proof}
 We have an integral along the imaginary cycle, which vanishes as we approach the singular value. Therefore we can use the residue theorem of complex analysis. More specifically, let us make the substitution $l \mapsto l \varepsilon$, $h \mapsto h \varepsilon$ in the integrand of \eqref{imag} and make a Taylor expansion in $\varepsilon$, which gives
 \begin{equation}
 \begin{aligned}
 \label{polRI}
 \dfrac{2 R_\mcI(p_2)}{\sqrt{P(p_2)}} &= \dfrac{b_{00}}{(p_2-2)(p_2-2R)\rb} + \dfrac{1}{p_2(p_2-2)^2\rb^3}(b_{10} l + b_{01} h)\varepsilon  \\&+ \dfrac{1}{{p_2}^2 (p_2-2)(p_2-2R)\rb^5} (b_{20} l^2 + b_{11} l h + b_{02} h^2)\varepsilon^2+\mcO (\varepsilon^3),
 \end{aligned}
 \end{equation} where $$ \rb := \sqrt{-R^2 (1 - 2 t)^2 + 2 R (1 + p_2 (-1 + t)) t - t^2}$$ and $b_{mn}$ are polynomial coefficients in $p_2$, $\rb$, $R$, $t$ listed in Appendix \ref{apcoef}. By computing now the residue of \eqref{polRI} at the pole $p_2=0$, we get the expansion of $\mcJ(l,h)$:
 \begin{equation}
 \begin{aligned}
 %
 %c_{10} =& -t + R (-1 + 2 t)\\
%c_{01} =& 2 R\\
 %
\mcJ(l,h) &= \dfrac{1}{\ra} ( -(R + t - 2 R t) l + 2 R  h)\varepsilon + \dfrac{1}{\ra^5} (c_{20} l^2 + c_{11} lh + c_{02} h^2)\varepsilon^2  \\
&+ \dfrac{1}{\ra^9} (c_{30} l^3 + c_{21} l^2h + c_{12}lh^2+c_{03}h^3)\varepsilon^3+\mcO (\varepsilon^4),
\label{polJ}
\end{aligned}
\end{equation} where $\ra = \rb_{|_{p_2 =0}}$ is as in Definition \ref{sqr} and $c_{mn}$ are polynomial coefficients in $R$ and $t$, also listed in Appendix \ref{apcoef}. We now write $j=\mcJ(l,h)$ and invert the expansion \eqref{polJ} considering $l$ as constant. This way we obtain $h = \mcB(l,j)$ with the expansion \eqref{polB}.
\end{proof}

We note that the coefficients of $j$ and $l$ in the Birkhoff normal form are the real and imaginary part of the eigenvalues 
of the linearised Hamiltonian vector field at the focus-focus point, respectively.

As already mentioned, we now derive equations that express the derivatives of the symplectic invariant in terms of the reduced period and rotation number integrals over real and imaginary cycles.
Since we are working with scaled variables, cf.\ equation \eqref{chan2}, we will refer to the symplectic invariant in scaled coordinates by $\mcS$ and in the original coordinates by $S$.
\begin{theo}
\label{theder}
Let $\mcT(l,h)$ be the reduced period and $\mcW(l,h)$ the rotation number of the coupled angular momenta. Let also $\mcT^\alpha(l,h)$ and $\mcW^\alpha(l,h)$ be their imaginary counterparts. Then the partial derivatives of the (scaled) Taylor series symplectic invariant $\mcS(w)=\mcS(l,j)$ are given by
\begin{equation*}
\begin{aligned}
\frac{\partial \mcS}{\partial l} &= 2\pi \left.\left( \mcW^\alpha (l,h) \frac{\mcT(l,h)}{\mcT^\alpha(l,h)} - \mcW(l,h) \right)\right|_{h=\mcB(l,j)} \hspace{-0.2cm}+ \arg(w), \\[0.2cm] \frac{\partial \mcS}{\partial j} &= 2\pi \left.\frac{\mcT(l,h)}{\mcT^\alpha(l,h)}\right|_{h=\mcB(l,j)}\hspace{-0.2cm}+\ln |w|,
\end{aligned}
\end{equation*} where $w=l+ij$ and $\mcB(l,j)$ is the Birkhoff normal form.
\end{theo}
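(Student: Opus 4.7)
The plan is to obtain the two partial derivatives from the defining relation $dS = \sigma$, with $\sigma_1 = \tau_1 - \text{Im}(\ln w)$ and $\sigma_2 = \tau_2 + \text{Re}(\ln w)$ from \eqref{taus}, by computing the first-return times $\tau_1,\tau_2$ in terms of the (real and imaginary) period and rotation number of the reduced system. The key preliminary identification, justified by Lemma \ref{BNF} and the Birkhoff-normal-form observation of Dullin cited before it, is that the semi-global map can be taken as $\Phi_1 = L$ (the $\mbS^1$-generator) and $\Phi_2 = \mcJ(L,H)$, so that on the neighbourhood $W$ of the focus-focus fibre one has $H = \mcB(\Phi_1,\Phi_2)$ and $w = l + ij$.

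The computation of $\tau_2$ uses the chain-rule decomposition
\begin{equation*}
\mcX_{\Phi_2} = \frac{\partial \mcJ}{\partial L}\,\mcX_L + \frac{\partial \mcJ}{\partial H}\,\mcX_H.
\end{equation*}
Since the $\mcX_L$-part is tangent to the $L$-orbit, the first return of the $\Phi_2$-flow to that orbit is determined entirely by the $\mcX_H$-part, which must have traversed exactly one reduced period $\mcT$. Hence $\tau_2\,\partial_H\mcJ = \mcT$, and together with $\mcT^\alpha = 2\pi\,\partial_H \mcJ$ from \eqref{TWal} this gives $\tau_2 = 2\pi\,\mcT/\mcT^\alpha$. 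Feeding this into $\partial_j \mcS = \sigma_2 = \tau_2 + \ln|w|$ and evaluating at $h=\mcB(l,j)$ yields the second formula of the theorem.

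For $\tau_1$ I would track the displacement along the $L$-orbit accumulated during time $\tau_2$ and cancel it by flowing $\mcX_L = \mcX_{\Phi_1}$ for time $\tau_1$. The $\mcX_L$-component of $\mcX_{\Phi_2}$ contributes a shift $\tau_2\,\partial_L\mcJ = -\tau_2\,\mcW^\alpha$ (using $\mcW^\alpha = -\partial_L\mcJ$), while the $\mcX_H$-component over one reduced period contributes the standard rotation shift $2\pi\mcW$ (from $\mcW = -\partial_l \mcI = \omega_L/\omega_\mcI$, the frequency ratio on the regular torus, combined with the period $\mcT = 2\pi/\omega_\mcI$). Cancelling yields $\tau_1 = 2\pi\bigl(\mcW^\alpha\,\mcT/\mcT^\alpha - \mcW\bigr)$, and substituting into $\partial_l \mcS = \sigma_1 = \tau_1 \mp \arg w$ and evaluating at $h=\mcB(l,j)$ produces the first formula.

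The hard part is purely bookkeeping of signs and orientations: the choice \eqref{derI} fixing the orientation of $\beta$, the orientation of the vanishing cycle $\alpha$ implicit in \eqref{imag}, the sign $\mcW = -\partial_l \mcI$, and the branch of $\ln w$ fixed after \eqref{sigma} (so that $\sigma_2(0)\in [0,2\pi[$) must all be tracked consistently so that the $\pm\arg w$ and $\pm\ln|w|$ terms emerge with the signs printed in the statement. I would therefore carry out the decomposition once on a generic regular torus parametrised by action-angle coordinates $(\phi_L,\phi_J)$ conjugate to $(L,\mcJ(L,H))$, verify that the ``first $\mcX_{\Phi_2}$ then $\mcX_{\Phi_1}$'' convention of Figure \ref{cycles2} matches the sign of the frequency ratio obtained from $H=\mcB(L,\mcJ)$, and only then substitute the semi-local branch of $\ln w$ into $\sigma_1,\sigma_2$.
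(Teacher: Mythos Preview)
Your proposal is correct and reaches the same formulas as the paper, but the route differs slightly. The paper does not compute the return times $\tau_1,\tau_2$ from a vector-field decomposition; instead it invokes the regularised-action formula \eqref{inv5}, $\mcS(w)=\mcA(w)-\mcA(0)+\imp(w\ln w-w)$, together with the identification $\mcA(w)=2\pi\,\mcI(l,\mcB(l,j))$ from \eqref{AI}, and then differentiates by the chain rule. The needed derivatives of $\mcB$ are obtained by implicit differentiation of $h=\mcB(l,\mcJ(l,h))$, yielding $\partial_j\mcB=(\partial_h\mcJ)^{-1}$ and $\partial_l\mcB=-(\partial_h\mcJ)^{-1}\partial_l\mcJ$; substituting $\mcT=2\pi\partial_h\mcI$, $\mcW=-\partial_l\mcI$, $\mcT^\alpha=2\pi\partial_h\mcJ$, $\mcW^\alpha=-\partial_l\mcJ$ then gives the stated result directly.

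Your decomposition $\mcX_{\Phi_2}=\partial_L\mcJ\,\mcX_L+\partial_H\mcJ\,\mcX_H$ and the paper's chain rule on $\mcI\circ\mcB$ encode the same identity (indeed $\tau_2=\partial_j\mcA=2\pi\,\partial_h\mcI\cdot\partial_j\mcB=\mcT/\partial_h\mcJ$), so the underlying algebra is identical. The practical difference is that the paper's route fixes all signs at the outset through the conventions \eqref{derI} and \eqref{inv5}, so the orientation and branch bookkeeping you rightly flag as ``the hard part'' is absorbed into one line of calculus rather than tracked through the dynamics of the return map.
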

\begin{proof}
The Birkhoff normal form $\mcB(l,j)$ allows us to relate the action $\mcI (l,h)$ defined in \eqref{int1} with the area integral $\mcA(w)$ defined in \S \ref{sec:taylor}, since $w = w_1 + i w_2 = l + ij$. Thus,
\begin{equation}
\mcA(w) = 2\pi \mcI (l,\mcB(l,j)),
\label{AI}
\end{equation} or in other words, $\Phi = (\Phi_1,\Phi_2)$ with $\Phi_1=L$ and $\Phi_2=J\circ F$. Furthermore, if we denote the singular part of the area integral by $\mcA_{sing}(z) := -\imp(w \ln w - w) = -j \ln|w| -l \arg(w) +j$, where $\arg(w)$ takes values between $-\frac{3\pi}{2}$ and $\frac{\pi}{2}$, we can express the Taylor series symplectic invariant as 
$$ \mcS(w) = \mcA(w) - \mcA(0) -\mcA_{sing}(w).$$

The partial derivatives of the singular part of the action are
$$ \dfrac{\partial \mcA_{sing}}{\partial l} = -\arg(w),\qquad \dfrac{\partial \mcA_{sing}}{\partial j} = -\ln|w|
.$$ 

We  now find the partial derivatives of the Taylor series symplectic invariant $S(w)$. We start by
$$
\dfrac{\partial \mcS}{\partial j} = \dfrac{\partial \mcA}{\partial j} - \dfrac{\partial \mcA_{sing}}{\partial j} = 2\pi \left. \dfrac{\partial \mcI}{\partial h} \right|_{h = \mcB(l,j)}\hspace{-0.1cm} \dfrac{\partial \mcB}{\partial j} + \ln|w|. $$
If we differentiate the expression $h = B(l,\mcJ(l,h))$ with respect to $h$ we obtain
$$\dfrac{\partial \mcB}{\partial j} = \left( \dfrac{\partial \mcJ}{\partial h} \right)^{-1}$$
and thus
$$\dfrac{\partial \mcS}{\partial j}=\mcT(l,\mcB(l,j)) \left(\dfrac{\partial \mcJ}{\partial h}\right)^{-1}_{|_{h = \mcB(l,j)}} \hspace{-0.2cm}+ \ln|w| = 2\pi \left.\dfrac{\mcT(l,h)}{\mcT^\alpha(l,h)}\right|_{h = \mcB(l,j)}\hspace{-0.2cm} + \ln|w|.
$$
Similarly,
$$
\dfrac{\partial \mcS}{\partial l} = \dfrac{\partial \mcA}{\partial l} - \dfrac{\partial \mcA_{sing}}{\partial l} = 2\pi \left. \dfrac{\partial \mcI}{\partial l} \right|_{h = \mcB(l,j)}\hspace{-0.2cm} + 2\pi \left.\dfrac{\partial\mcI}{ \partial h}\right|_{h = \mcB(l,j)}\hspace{-0.1cm}\dfrac{\partial \mcB}{\partial l} + \arg(w). $$ 
Differentiating $h = B(l,\mcJ(l,h))$ with respect to $l$ we obtain
$$ 0 = \dfrac{\partial \mcB}{\partial j} \dfrac{\partial \mcJ}{\partial l} + \dfrac{\partial \mcB}{\partial l} = \left( \dfrac{\partial \mcJ}{\partial h} \right)^{-1}\dfrac{\partial \mcJ}{\partial l} + \dfrac{\partial \mcB}{\partial l} $$ and thus 
\begin{align*}
\dfrac{\partial \mcS}{\partial l} &= -2\pi \mcW(l,\mcB(l,j)) - \mcT(l,\mcB(l,j))\left(\dfrac{\partial \mcJ}{\partial h}\right)^{-1} \dfrac{\partial \mcJ}{\partial l}+ \arg(w)\\&= -2\pi \mcW(l,\mcB(l,j)) + 2\pi \mcT(l,\mcB(l,j))\left.\dfrac{\mcW^\alpha(l,h)}{\mcT^\alpha(l,h)}\right|_{h = \mcB(l,j)} \hspace{-0.2cm}+ \arg(w)\\&= 2\pi \left.\left( \mcW^\alpha (l,h) \frac{\mcT(l,h)}{\mcT^\alpha(l,h)} - \mcW(l,h) \right)\right|_{h=\mcB(l,j)} \hspace{-0.2cm}+ \arg(w),
\end{align*}
which is what we wanted to prove.
\end{proof}

As a consequence of this theorem, in order to find the Taylor series invariant it is enough to calculate the expansions of $\mcT$, $\mcW$, $\mcT^\alpha$ and $\mcW^\alpha$. The last two are obtained immediately from the definitions \eqref{TWal} by differentiating the Taylor expansion of $\mcJ$ in \eqref{polJ}. We find
\begin{equation}
\begin{aligned}
\mcT^\al(l,h) &= 2\pi \dfrac{\partial \mcJ}{\partial h} = 2\pi\dfrac{2R}{\ra} + \dfrac{2\pi}{\ra^5}(c_{11} l + 2c_{02} h)\varepsilon \\&+ \dfrac{2\pi}{\ra^9} (c_{21} l^2 + 2c_{12} l h + 3c_{03} h^2)\varepsilon^2 + \mcO (\varepsilon^3)
\label{expTa}
\end{aligned}
\end{equation} and 
\begin{equation}
\begin{aligned}
\mcW^\al(l,h) &= - \dfrac{\partial \mcJ}{\partial l} = - \dfrac{R + t - 2 R t}{\ra} - \dfrac{1}{\ra^5} (2c_{20} l + c_{11}h)\varepsilon \\&- \dfrac{1}{\ra^9} (3c_{30}l^2 + 2c_{21}lh + c_{12}h^2)\varepsilon^2 + \mcO (\varepsilon^3),
\label{expWa}
\end{aligned}
\end{equation} where $\ra$ is as in Definition \ref{sqr} and $c_{mn}$ are polynomial coefficients in $R$, $t$ listed in Appendix \ref{apcoef}. As noted in Def.~\ref{sqr} the real part of the eigenvalue of the focus-focus point is $\frac{r_A}{2 R}$, which constitutes the leading-oder term of the imaginary period. Similarly, the leading order terms of the imaginary rotation number is the ratio of the imaginary and real part of the eigenvalue of the focus-focus point, as proved in the general case in Dullin $\&$ \vungoc\!\! \cite{DV}.

Now we need to calculate the expansions of $\mcT$ and $\mcW$. The first step is to rewrite the integral expressions of $\mcT$ and $\mcW$ from Lemma \ref{lemTW1} in terms of Legendre canonical forms. For the reduced period we write 
$$\mcT = 2\pi \dfrac{\partial \mcI}{\partial h} = \oint_\beta \dfrac{\dee p_2}{\sqrt{P(p_2)}} = 2 \int_{\ze_2}^{\ze_3} \dfrac{\dee p_2}{s},$$ 
where $s^2 =P(p_2)$ is the elliptic curve on which the integral takes place. Using the change of variable 
\begin{equation}
 x := \sqrt{\dfrac{\ze_3-p_2}{\ze_3-\ze_2}}
 \label{chan}
\end{equation}
suggested for example in Abramowitz $\&$ Stegun \cite{Ab1} or Byrd $\&$ Friedman \cite{BF}, we transform as follows:
$$ \mcN_A := \int_{\ze_2}^{\ze_3} \dfrac{\dee p_2}{s} = \dfrac{\sqrt{2}}{\sqrt{\ze_3-\ze_1}} \int_0^1 \dfrac{\dee x}{\sqrt{(1-x^2)(1-k^2x^2)}} = \dfrac{\sqrt{2}}{\sqrt{\ze_3-\ze_1}} K(k),$$ where $k$ is the elliptic modulus, given by 
\begin{equation}
k^2 = \dfrac{\ze_3-\ze_2}{\ze_3-\ze_1}.
\label{kk}
\end{equation} and $K(k)$ is the complete elliptic integral of first kind defined in Appendix \ref{sec:ellipticInt}.

Thus, we can write 
\begin{equation}
 \mcT = 2 \mcN_A = \dfrac{2\sqrt{2}}{\sqrt{\ze_3-\ze_1}} K(k).
 \label{expTT}
\end{equation}
 Similarly, for the rotation number we have
\begin{align*}
\mcW &= -\dfrac{\partial \mcI}{\partial l} = \dfrac{1}{2\pi} \oint_\beta R_\mcW (p_2) \dfrac{\dee p_2}{\sqrt{P(p_2)}} \\&
= -\dfrac{h+A(l)}{2\pi} \int_{\ze_2}^{\ze_3} \dfrac{1}{p_2-l }\dfrac{\dee p_2}{s} - \dfrac{h+A(l+2)}{2\pi} \int_{\ze_2}^{\ze_3} \dfrac{1}{p_2-l-2 } \dfrac{\dee p_2}{s},
\end{align*}
where $A(p_2)$ is defined in \eqref{ABdef}. Using again the variable change \eqref{chan} we transform integrals of the form 
$$ \mcN_{B,\ga} = \int_{\ze_2}^{\ze_3} \dfrac{1}{p_2-\ga} \dfrac{\dee p_2}{s},$$
where $\ga$ is a constant, into
$$ \mcN_{B,\ga} = \dfrac{\sqrt{2}}{(\ze_3-\ga)\sqrt{\ze_3-\ze_1}} \int_0^1 \dfrac{\dee x}{(1-n_\ga x^2)\sqrt{(1-x^2)(1-k^2x^2)}} = \dfrac{\sqrt{2}}{(\ze_3-\ga)\sqrt{\ze_3-\ze_1}} \Pi(n_\ga,k),$$
where $n_\ga$ is the characteristic, given by
$$n_\ga = \dfrac{\ze_3-\ze_2}{\ze_3 - \ga}$$
and $\Pi(n_\ga,k)$ is the complete elliptic integral of third kind defined in Appendix \ref{sec:ellipticInt}. 

For the rotation number we thus obtain
\begin{align}
\mcW  \nonumber
= &-\dfrac{h+A(l)}{2\pi} \mcN_{B,l} - \dfrac{h+A(l+2)}{2\pi} \mcN_{B,l+2} \\ \label{expWW}
=&- \frac{h+A(l)}{\sqrt{2}\pi} \frac{1}{(\ze_3-l)\sqrt{\ze_3-\ze_1}} \Pi(n_l,k) \\&- \frac{h+A(l+2)}{\sqrt{2}\pi} \frac{1}{(\ze_3-l-2)\sqrt{\ze_3-\ze_1}} \Pi(n_{l+2},k) \nonumber
\end{align}
 where $k$ is again as in \eqref{kk} and the characteristics $n_l$ and $n_{l+2}$ are given by
\begin{equation}
n_l = \dfrac{\ze_3-\ze_2}{\ze_3 - l},\qquad n_{l+2} = \dfrac{\ze_3-\ze_2}{\ze_3-l-2}.
\label{chara}
\end{equation}

We recall that the roots have the ordering given in \eqref{roots}. This means that the characteristics $n_l$ and $n_{l+2}$ in \eqref{chara} belong to the circular case introduced in Appendix \ref{sec:ellipticInt}. More precisely, $n_l$ belongs to the positive circular case $(k^2<n<1)$ and $n_{l+2}$ belongs to the negative circular case $(n<0)$, so in both cases we can express the complete elliptic integral $\Pi(n_\ga,k)$ in terms of Heuman's lambda function defined in \eqref{lamlam}. Using now the expansions \eqref{expK1},\eqref{expL1} we are able to expand the reduced period $\mcT$ and the rotation number $\mcW$ in terms of the variables $l,h$, the elliptic modulus $k$ and the characteristics $n_l,n_{l+2}$.

But the elliptic modulus and the characteristics depend only on the roots of the polynomial $P(p_2)$, as we see in \eqref{kk} and \eqref{chara}, which are also functions of $h$ and $l$. 

\begin{lemm}
\label{polRoots}
The Taylor expansion of the roots $\ze_1, \ze_2, \ze_3$ of $P(p_2)$ around $(l,h)=(0,0)$ are 
\begin{equation}
\begin{aligned}
\ze_1(l,h) &= \dfrac{1}{\ra^2}(d_{100}l + d_{010}h + d_{001}\rg) \varepsilon + \dfrac{R}{2 \ra^6 \rg} (d_{300}l^3 +d_{210}l^2 h +d_{201}l^2 \rg \hspace{0.6cm} \\&+d_{120}l h^2 +d_{111}l h \rg +d_{030}h^3 +d_{021}h^2 \rg )\varepsilon^2+ \mcO (\varepsilon^3),
\label{polRoot1}
\end{aligned}
\end{equation}
\begin{equation}
\begin{aligned}
\ze_2(l,h) &= \dfrac{1}{\ra^2}(e_{100}l + e_{010}h + e_{001}\rg) \varepsilon + \dfrac{R}{2 \ra^6 \rg} (e_{300}l^3 +e_{210}l^2 h +e_{201}l^2 \rg \hspace{0.6cm} \\&+e_{120}l h^2 +e_{111}l h \rg +e_{030}h^3 +e_{021}h^2 \rg )\varepsilon^2+ \mcO (\varepsilon^3),
\label{polRoot2}
\end{aligned}
\end{equation}
and
\begin{equation}
\begin{aligned}
\ze_3(l,h) &=\dfrac{\ra^2}{2R t(1-t)} + \dfrac{R-t}{(1-t)\ra^2}(f_{10}l + f_{01}h)\varepsilon + \dfrac{2Rt}{\ra^6} (f_{20}l^2 + f_{11} l h + f_{02} h^2)\varepsilon^2 \\& + \dfrac{4R^2 t^2}{\ra^{10}} (f_{30} l^3 + f_{21} l^2 h + f_{12} l h^2 + f_{03}h^3)\varepsilon^3 + \mcO (\varepsilon^4),
\label{polRoot3}
\end{aligned}
\end{equation}
where 
\begin{equation}
\rg := \sqrt{ l^2 (1 - t) t + lh (R + t - 2 R t)+h^2 R}
\label{rrg}
\end{equation} and $d_{mnk}, e_{mnk}, f_{mnk}$ are polynomial coefficients in $R$, $t$ listed in Appendix \ref{apcoef}. The third subscript denotes powers of $r_C$ and only takes values 0 or 1.
\end{lemm}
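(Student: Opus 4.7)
The plan is to analyse the cubic polynomial $P(p_2)$ directly. A straightforward computation from \eqref{ABdef}--\eqref{Pdef} shows that
\[
P(p_2;l,h)=P_3\,p_2^3+P_2(l,h)\,p_2^2+P_1(l,h)\,p_2+P_0(l,h),
\]
with $P_3=\tfrac{2t(3t-1)}{R}$ constant, $P_0=-(h-l(1-2t))^2$, and $P_1(0,0)=0$; consequently, at the focus-focus critical value $(l,h)=(0,0)$, the cubic factorises as $P(p_2)=p_2^2\bigl(P_3 p_2+P_2(0,0)\bigr)$, exhibiting a double root at $p_2=0$ together with the simple root $\zeta_3(0,0)=-P_2(0,0)/P_3$ that stays away from zero. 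This identifies the limiting positions of the three roots: $\zeta_1,\zeta_2\to 0$ and $\zeta_3\to\zeta_3(0,0)$ as $\varepsilon\to 0$.

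The simple root $\zeta_3$ yields an analytic branch in $(l,h)$ by the implicit function theorem, since $\partial_{p_2}P(\zeta_3(0,0);0,0)=P_3\,\zeta_3(0,0)^2\ne 0$. Substituting the ansatz $\zeta_3=\sum_{k\ge 0}\zeta_3^{(k)}(l,h)\,\varepsilon^k$ into $P(\zeta_3;l\varepsilon,h\varepsilon)=0$ and matching powers of $\varepsilon$ determines each $\zeta_3^{(k)}$ recursively through a triangular sequence of linear equations, producing the expansion \eqref{polRoot3}.

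The colliding pair $\zeta_1,\zeta_2$ is not analytic in $(l,h)$ at the origin, so following a Newton-polygon approach we rescale $p_2=\varepsilon\tilde p$. Substituting this together with $l\mapsto l\varepsilon$, $h\mapsto h\varepsilon$ into $P$ and dividing by $\varepsilon^2$ yields
\[
\varepsilon^{-2}P(\varepsilon\tilde p;l\varepsilon,h\varepsilon)=\tilde P_0(\tilde p;l,h)+\varepsilon\,\tilde P_1(\tilde p;l,h)+\varepsilon^2\tilde P_2(\tilde p;l,h)+\cdots,
\]
where $\tilde P_0$ is a quadratic in $\tilde p$ with coefficients polynomial in $(l,h)$. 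The leading-order branches are the two roots $\tilde p_\pm$ of $\tilde P_0=0$, given by the quadratic formula; the square root so obtained has radicand a polynomial quadratic form in $(l,h)$ that, by direct computation, reduces to $\rg^2$ of \eqref{rrg} up to a nonvanishing rational function of $R,t$. This produces the $\pm d_{001}\rg$ and $\pm e_{001}\rg$ contributions in \eqref{polRoot1}--\eqref{polRoot2}, with the two signs being assigned to $\zeta_1$ and $\zeta_2$ in accordance with the ordering \eqref{roots}. Higher-order corrections $\tilde p_i^{(k)}$ are determined inductively by substituting $\zeta_i=\varepsilon\tilde p_i^{(0)}+\varepsilon^2\tilde p_i^{(1)}+\cdots$ into the full expansion of $P$ and solving at each order a linear equation whose coefficient $\partial_{\tilde p}\tilde P_0(\tilde p_i^{(0)};l,h)$ is a nonzero multiple of $\rg$, and hence invertible for $(l,h)\ne(0,0)$. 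Because $\rg^2$ is polynomial in $(l,h)$, every even power of $\rg$ can be absorbed into the polynomial part of the coefficient, which explains why the third subscript of $d_{mnk}$ and $e_{mnk}$ only takes the values $0$ or $1$.

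The main technical obstacle is the sheer bulk of the symbolic computation rather than any conceptual difficulty: the polynomial coefficients $d_{mnk},e_{mnk},f_{mn}$ listed in Appendix~\ref{apcoef} grow rapidly in size as the order in $\varepsilon$ increases, and at every stage the intermediate expression must be consistently split into a polynomial part and an $\rg$-linear part. For this reason the entire calculation is carried out with \emph{Mathematica}, and only the final coefficient lists appear in the appendix.
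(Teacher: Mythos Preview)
Your approach is essentially the same as the paper's: both substitute $l\mapsto l\varepsilon$, $h\mapsto h\varepsilon$ and solve for the roots order by order, treating the simple root $\zeta_3$ by a straightforward power-series ansatz (the paper does this directly, you invoke the implicit function theorem) and handling the colliding pair $\zeta_1,\zeta_2$ by what amounts to the same leading-order quadratic---the paper phrases this as ``the numerator of $g_2$ must vanish since $g_0=0$'', which is exactly your Newton-polygon rescaling $p_2=\varepsilon\tilde p$ written out by hand. One minor slip: the leading coefficient is $P_3=\tfrac{2t(t-1)}{R}$, not $\tfrac{2t(3t-1)}{R}$, but this does not affect the argument.
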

\begin{proof}
The polynomial $P(p_2)$ defined in \eqref{Pdef} is given by
\begin{align*} P(p_2) = \dfrac{1}{R^2}&\left( t^2 p_2(p_2-l)(p_2-2R)(p_2-l-2) - (h R + p_2 (R + (-1 + p_2) t - 2 R t) \right.\\& \left.- l (R + p_2 t - 2 R t))^2 \right).
\end{align*} We find the Taylor expansion $p_2 = {g}_0 + {g}_1 \varepsilon + {g}_2 \varepsilon^2 + {g}_3 \varepsilon^3 + \mcO(\varepsilon^4)$ of the roots of $P(p_2)$ iteratively: we first make the substitution $l \mapsto l \varepsilon$ and $h \mapsto h \varepsilon$ and expand in $\varepsilon$. After that we solve order by order. 

For order 0 we get the equation
$$ \dfrac{{g_0}^2 (-R^2 (1 - 2 t)^2 + 2 R (1 + g_0 (-1 + t)) t - t^2)}{R^2} =0,$$ which has as solutions $g_0=0$ with multiplicity 2, corresponding to the constant terms of the roots $\ze_1,\ze_2$ and 
$$ g_0 = \dfrac{-R^2 (1 - 2 t)^2 + 2 R t - t^2}{2 R (1 - t) t} = \dfrac{\ra^2}{2 R (1 - t) t} $$ with multiplicity 1, corresponding to the constant term of the root $\ze_3$. 

Now we move on to compute the next two orders. For order 1 we have
$$ g_1=\dfrac{R (h (R + (-1 + g_0) t - 2 R t) + 
 l (R (1 - 2 t)^2 + t (-1 + 2 g_0 - 2 g_0 t)))}{
R^2 (1 - 2 t)^2 + R (-2 - 3 g_0 (-1 + t)) t + t^2}, $$ 
and for order 2
\begin{align*}
g_2 =& -\dfrac{( 2 l {g_1} R (-R (1 - 2 t)^2 + (1 + 4 {g_0} (-1 + t)) t) + 
 l^2 R (R (1 - 2 t)^2 - 2 {g_0} (-1 + t) t)}{2 {g_0} (R^2 (1 - 2 t)^2 + R (-2 - 3 {g_0} (-1 + t)) t + t^2)}\\
 		& -\dfrac{h^2 R^2 +{g_1}^2 (R^2 (1 - 2 t)^2 - 2 R (1 + 3 {g_0} (-1 + t)) t + t^2)}{2 {g_0} (R^2 (1 - 2 t)^2 + R (-2 - 3 {g_0} (-1 + t)) t + t^2)} \\
 		&-\dfrac{2 h R (l (R + {g_0} t - 2 R t) + {g_1} (-R + t - 2 {g_0} t + 2 R t))}{2 {g_0} (R^2 (1 - 2 t)^2 + R (-2 - 3 {g_0} (-1 + t)) t + t^2)}.
\end{align*}

By substituting each term into the following one, we obtain by iteration the expansion \eqref{polRoot3} of the root $\ze_3$. However, for $\ze_1$, $\ze_2$ to be well-defined, since $g_0=0$, the only solution is that the numerator of $g_2$ vanishes identically, which has as solutions
$$g_1 = \dfrac{R (-l R + l t + 4 l R t - 4 l R t^2 + h (t + R (-1 + 2 t)) \pm 
 2 t \rg )}{\ra^2},$$ where $\rg = \sqrt{ l^2 (1 - t) t + lh (R + t - 2 R t)+h^2 R}$. If we take the solution with positive sign as the second order term of $\ze_2$ and the one with negative sign as the second order term of $\ze_1$, we can find the rest of terms of the expansions \eqref{polRoot1} and \eqref{polRoot2} by iteration. 
\end{proof}

By combining these results we now compute the expansion of the reduced period. Notice that 
the coefficient of $-\ln|w|$ is the inverse of the real part of the eigenvalue of the focus-focus point.

\begin{lemm}
\label{lemT}
The expansion of the reduced period is
\begin{align}
 \mcT(l,j) =& \left( \dfrac{1}{4\ra^5}(h_{10}l + h_{01}j)\varepsilon 
 +\dfrac{1}{32 R \ra^8} (h_{20}l^2 + h_{11}lj + h_{02}j^2 )\varepsilon^2
 +\mcO(\varepsilon^3) \right) \nonumber \\&+ \left( \ln|w| - \ln \frac{4 \ra^3}{R^{3/2}(1-t)t^2} \right) \left( -\dfrac{2 R}{\ra}+\dfrac{R t}{\ra^4}(h_{L10}l + h_{L01}j)\varepsilon \right. \nonumber \\
 & \hspace{6cm}+ \left.\dfrac{R t^2}{2\ra^9}(h_{L20} l^2 + h_{L11}lj + h_{L02}j^2)\varepsilon^2 + \mcO(\varepsilon^3)
 \right), \label{PolT}
\end{align} where $h_{mn}$, $h_{Lmn}$ are polynomial coefficients in $\ra$, $R$, $t$ listed in Appendix \ref{apcoef}.
\end{lemm}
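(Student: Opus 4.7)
My plan starts from the closed-form expression~\eqref{expTT} of the reduced period as the complete elliptic integral $\mcT=(2\sqrt{2}/\sqrt{\ze_3-\ze_1})\,K(k)$ with modulus $k^2=(\ze_3-\ze_2)/(\ze_3-\ze_1)$. First I would substitute the series for $\ze_1$, $\ze_2$, $\ze_3$ obtained in Lemma~\ref{polRoots} to expand the prefactor $1/\sqrt{\ze_3-\ze_1}$, which is analytic at $(l,h)=(0,0)$ because $\ze_3-\ze_1$ has the nonzero constant term $\ra^2/(2Rt(1-t))$, and the complementary modulus $k'^2=1-k^2=(\ze_2-\ze_1)/(\ze_3-\ze_1)$, which vanishes at the critical value because the roots $\ze_1$ and $\ze_2$ collide there. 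Note that the square $(\ze_2-\ze_1)^2$ is analytic in $(l,h)$ as a symmetric function of the two colliding roots of $P(p_2)$, even though $\ze_2-\ze_1$ itself involves the square root $\rg$ of Lemma~\ref{polRoots}.

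Next I would insert the near-$k=1$ expansion of $K$ from Appendix~\ref{sec:ellipticInt}, which has the form $K(k)=\phi(k'^2)\ln(4/k')+\chi(k'^2)$ with $\phi,\chi$ analytic at $0$ and $\phi(0)=1$, $\chi(0)=0$. Writing $\ln(4/k')=\ln 4-\tfrac14\ln k'^4$ and multiplying by $2\sqrt{2}/\sqrt{\ze_3-\ze_1}$ represents $\mcT$ as an analytic function of $(l,h)$ plus an analytic coefficient times $\ln k'^4$. I then substitute $h=\mcB(l,j)$ from Lemma~\ref{BNF} to re-express everything in the variables $(l,j)$, and identify the logarithmic piece with $(\ln|w|-\ln C)\cdot(\text{series})$ where $|w|=\sqrt{l^2+j^2}$ and $C=4\ra^3/(R^{3/2}(1-t)t^2)$. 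The existence of such a representation, and in particular the value $-2R/\ra$ of the leading coefficient of $\ln|w|$, are forced by the identity
\[
\mcT=\frac{\mcT^\alpha}{2\pi}\bigl(\partial_j S-\ln|w|\bigr),
\]
which one reads off from~\eqref{inv5}--\eqref{AI} by differentiating with respect to $h$ and using $\mcJ(l,\mcB(l,j))=j$: the coefficient of $\ln|w|$ is exactly $-\mcT^\alpha(l,\mcB(l,j))/(2\pi)$, whose value at the origin is $-2R/\ra$.

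The main obstacle is precisely this reorganization of $\ln k'^4$, whose leading behavior involves the quadratic form $\rg^2|_{h=\mcB(l,j)}$, into $2\ln|w|$ plus an analytic remainder contributing to $\ln C$. The quadratic form $\rg^2|_{h=\mcB(l,j)}$ is \emph{not} proportional to $|w|^2=l^2+j^2$, so the cancellations are not visible term-by-term and only emerge after combining the $\rg^2$ contribution with the expansions of $\phi(k'^2)$, of $1/\sqrt{\ze_3-\ze_1}$, and of the higher-order corrections to the roots and to $\mcB$. Once the rearrangement has been carried through to the required order, the explicit coefficients $h_{mn}$ and $h_{Lmn}$ of Appendix~\ref{apcoef} are obtained by collecting powers of $l$ and $j$ up to order $\varepsilon^2$, a step performed with \emph{Mathematica} as elsewhere in the paper.
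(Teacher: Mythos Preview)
Your strategy coincides with the paper's: start from~\eqref{expTT}, feed in the $k\to 1$ expansion of $K(k)$ from Appendix~\ref{sec:ellipticInt}, the root expansions from Lemma~\ref{polRoots}, and the Birkhoff normal form from Lemma~\ref{BNF}. The paper carries this out by first expanding $\rg$ itself in the variables $(l,j)$ (equation~\eqref{polRBljser}), then the three roots~\eqref{pol2Root1}--\eqref{pol2Root3}, then $k^2$ via~\eqref{polKjl}, and finally assembling~\eqref{PolT}.

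Where your account diverges is the assertion that the leading quadratic form $\rg^2|_{h=\mcB(l,j)}$ is \emph{not} proportional to $l^2+j^2$. The paper computes explicitly (equation~\eqref{polRBljser}) that
\[
\rg(l,j)=\frac{\ra}{2\sqrt{R}}\,\sqrt{l^2+j^2}\,\varepsilon+O(\varepsilon^2),
\]
so proportionality \emph{does} hold at leading order, and this is exactly what turns $\ln k'^{2}\sim\ln\rg$ into $\ln|w|+\text{const}$ without further input. One can see it must be so: if the leading part of $(\ze_2-\ze_1)^2|_{h=\mcB}$ were any positive-definite quadratic other than a scalar multiple of $l^2+j^2$, then $\ln(\ze_2-\ze_1)-\ln|w|$ would be bounded but not extend smoothly to the origin, and no combination with the analytic factors $\phi(k'^2)$, $1/\sqrt{\ze_3-\ze_1}$, or higher-order corrections to $\mcB$ could cancel a genuinely non-smooth logarithm. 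The identity behind the proportionality is $(R+t-2Rt)^2+\ra^2=4Rt(1-t)$; if you obtained non-proportionality by direct substitution, the most likely cause is the sign of the $lh$-term in the displayed formula~\eqref{rrg}, which should be $-lh(R+t-2Rt)$ for consistency with the Mathematica output~\eqref{polRBljser} and the subsequent expansions. Beyond leading order, the paper's series~\eqref{polRBljser} does carry non-smooth pieces such as $j/|w|$, and these do cancel against contributions from the other factors in the course of the Mathematica computation --- but that is a separate, higher-order phenomenon from the one you identified as the ``main obstacle''.
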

\begin{proof}
In \eqref{expTT} we expressed the period $\mcT$ in terms of the Legendre canonical elliptic integral of first kind, whose Taylor expansion is given in \eqref{expK1}. The expansion depends on the elliptic modulus $k$, which we expressed in \eqref{kk} in terms of the roots $\ze_1, \ze_2,\ze_3$ of the polynomial $P(p_2)$. In Lemma \ref{polRoots} we have expressed these roots in terms of $h$, $l$ and $\rg$.

We want our result as a function of $l$ and $j$, so we substitute for $h$ the Birkhoff normal form $\mcB(l,j)$ found in Lemma \ref{BNF} in order to eliminate the dependence on $h$. Similarly, we also replace $h$ by $\mcB(l,j)$ in the definition of $\rg$ in \eqref{rrg} and expand around the origin to obtain the Taylor series of $\rg$ 
\begin{equation}
\begin{aligned}
\rg(l,j) &= \dfrac{\ra}{2\sqrt{R}} \sqrt{l^2+j^2} \varepsilon+ \dfrac{t j}{8 \sqrt{R} \ra^2 \sqrt{l^2+j^2}}(u_{20} l^2 + u_{11} lj+ u_{02} j^2)\varepsilon^2 \\&+ \dfrac{t^2}{64 \sqrt{R} \ra^5 (l^2+j^2)^{3/2}}(u_{60} l^6 + u_{51} l^5 j + u_{42} l^4j^2 + u_{33}l^3j^3 + u_{24} l^2 j^4 \\&+ u_{15} l j^5 + u_{06} j^6) \varepsilon^3+\mcO (\varepsilon^4),
\label{polRBljser}
\end{aligned}
\end{equation} where $u_{mn}$ are polynomial coefficients in $R$, $t$, $\ra$ listed in Appendix \ref{apcoef}. Substituting $h$ by $\mcB(l,j)$ and $\rg$ by \eqref{polRBljser} we obtain the expansion of the roots $\ze_1,\ze_2,\ze_3$ as a function of $j$ and $l$:
\begin{equation}
\begin{aligned}
\ze_1(l,j) =& \dfrac{1}{2\ra}(\al_{100}l + \al_{010}j + \al_{001}\sqrt{l^2+j^2})\varepsilon \\&
+ \dfrac{1}{8 \ra^4\sqrt{R} \sqrt{l^2+j^2}} (\al_{300}l^3  +\al_{210}l^2 j+ \al_{120}lj^2+\al_{030}j^3 \\& + (\al_{201}l^2+\al_{111}lj+\al_{021}j^2)\sqrt{l^2+j^2} )\varepsilon^2+ \mcO (\varepsilon^3),
\end{aligned}
\label{pol2Root1}
\end{equation}
\begin{equation}
\begin{aligned}
\ze_2(l,j) =& \dfrac{1}{2\ra}(\be_{100}l + \be_{010}j + \be_{001}\sqrt{l^2+j^2})\varepsilon \\&+ \dfrac{1}{8 \ra^4\sqrt{R} \sqrt{l^2+j^2}} (\be_{300}l^3  +\be_{210}l^2 j+ \be_{120}lj^2+\be_{030}j^3 \\&+ (\be_{201}l^2+\be_{111}lj+\be_{021}j^2)\sqrt{l^2+j^2} )\varepsilon^2+ \mcO (\varepsilon^3),
\end{aligned}
\label{pol2Root2}
\end{equation} and 
\begin{equation}
\begin{aligned}
\ze_3(l,j) &=\dfrac{\ra^2}{2R t(1-t)} + \dfrac{1}{2R\ra(-1+t)}(\ga_{10}l + \ga_{01}j)\varepsilon \\&+ \dfrac{1}{8 R \ra^4 (-1 + t) } (\ga_{20}l^2 + \ga_{11} lj + \ga_{02} j^2)\varepsilon^2 \\& + \dfrac{t^2}{8 \ra^8
 } (\ga_{30} l^3 + \ga_{21} l^2 j + \ga_{12} l j^2 + \ga_{03}j^3)\varepsilon^3 + \mcO (\varepsilon^4),
\end{aligned}
\label{pol2Root3}
\end{equation} where the coefficients $\al_{mnk},\be_{mnk},\ga_{mn}$ are polynomials in $R,t,\ra$ listed in Appendix \ref{apcoef}. Using now \eqref{kk} we express the elliptic modulus $k$ as a series 
\begin{align}
\nonumber
k^2(l,j) = 1 &+ \dfrac{4 R^{3/2} (-1 + t) t^2}{\ra^3}\sqrt{l^2+j^2}\varepsilon + \dfrac{\sqrt{R}(1-t)t^2}{2\ra^9 \sqrt{l^2+j^2} }
\left( \delta_{300} l^3 + \delta_{210}l^2j \right.\\&+\left. \delta_{120}lj^2 + \delta_{030}j^3 + (\delta_{201}l^2 + \delta_{111}lj + \delta_{021}j^2)\sqrt{l^2+j^2} \right)\varepsilon^2+\mcO (\varepsilon^3),
\label{polKjl}
\end{align}
where the coefficients $\delta_{mnk}$ are polynomials in $R,t,\ra$ listed in Appendix \ref{apcoef}. Using now \eqref{expTT} and \eqref{expK1} we obtain the Taylor series of the period $\mcT$ as a function only of $l$ and $j$, presented in \eqref{PolT}. 
\end{proof}

It is interesting to note that the deviation of the modulus $k^2$ from 1 to leading order depends only on the modulus $|w|=| l + i j|$.
Notice that in the next statement the coefficient of $-\ln|w|$ is the ratio of the real and imaginary part 
of the eigenvalue of the focus-focus point.

\begin{lemm}
\label{lemW}
The expansion of the rotation number is:
\begin{equation}
\begin{aligned}
 2\pi\mcW(l,j) =& \pi - \arctan \left( \dfrac{t - R (1 + t) - R^2 (1 - 2 t)}{(1 - R) \ra} \right)+\arg(w) \\&+ \dfrac{1}{4\ra^5} (v_{10}l + v_{01}j)\varepsilon
 %+ v_{20}j^2 + v_{11}jl + v_{02}l^2 
 +\mcO(\varepsilon^2)  \\&+ \left( \ln|w| - \ln \frac{4 \ra^3}{R^{3/2}(1-t)t^2} \right)\\& \hspace{1cm}\left(-\dfrac{R+t-2Rt}{\ra}+\dfrac{R(-1+t)t}{\ra^4}(v_{L10}l + v_{L01}j)\varepsilon + \mcO(\varepsilon^2)
 \right), \label{PolW}
\end{aligned}
\end{equation} where  $v_{mn}$, $v_{Lmn}$ are polynomial coefficients in $\ra$, $R$, $t$ listed in Appendix \ref{apcoef}.
\end{lemm}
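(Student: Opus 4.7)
The plan is to mirror the strategy used for the reduced period in Lemma \ref{lemT}, now applied to the integral representation of $\mcW$ in \eqref{expWW}. The starting point is the decomposition
\begin{equation*}
2\pi\,\mcW = -\frac{h+A(l)}{\ze_3-l}\cdot\frac{\sqrt{2}\,\Pi(n_l,k)}{\sqrt{\ze_3-\ze_1}} \;-\; \frac{h+A(l+2)}{\ze_3-l-2}\cdot\frac{\sqrt{2}\,\Pi(n_{l+2},k)}{\sqrt{\ze_3-\ze_1}},
\end{equation*}
where $k$ and the characteristics $n_l$, $n_{l+2}$ are given in \eqref{kk} and \eqref{chara}. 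As noted just before Lemma \ref{lemT}, the ordering \eqref{roots} of the roots places $n_l$ in the positive circular range $k^2<n<1$ and $n_{l+2}$ in the negative circular range $n<0$, so each $\Pi$ can be rewritten through Heuman's lambda function $\Lambda_0$ from Appendix A. This is the piece that produces both the regular logarithmic behaviour and the $\arg(w)$ contribution, since a circular $\Pi$ splits into a $K$-type term plus a $\Lambda_0$ term whose asymptotics as $k\to 1$ account for the characteristic-free singularity.

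Second, I would substitute the expansions already computed: the root expansions \eqref{pol2Root1}--\eqref{pol2Root3} of $\ze_1,\ze_2,\ze_3$ as functions of $(l,j)$, the elliptic modulus expansion \eqref{polKjl} for $k^2$, and the Birkhoff normal form $\mcB(l,j)$ from Lemma \ref{BNF} in place of $h$. Plugging these into the formulas for $n_l$, $n_{l+2}$ yields their series in $(l,j,\varepsilon)$; note that to leading order the characteristics are finite constants in $t,R$, while $1-k^2$ is of order $|w|$, which is precisely the regime in which the expansion \eqref{expL1} of $\Lambda_0$ together with \eqref{expK1} of $K$ produces a $\ln|w|$ plus constant contribution. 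Combining with the $\sqrt{\ze_3-\ze_1}$ prefactor and the rational factors $(\ze_3-l)^{-1}$, $(\ze_3-l-2)^{-1}$ gives the announced structure with a $\ln|w|$-free part and a $\ln|w|$-coefficient part.

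Third, I would extract the constant term. At $(l,j)=(0,0)$ the leading Heuman asymptotics reduce the expression to $\pi$ minus the sum of two $\arctan$-type boundary contributions coming from the two characteristics. A routine trigonometric identity (combining the two arctangents via the tangent addition formula and simplifying with the identity $\ra^2=2Rt-t^2-R^2(1-2t)^2$ from Definition \ref{sqr}) collapses them into the single $\arctan\!\bigl((t-R(1+t)-R^2(1-2t))/((1-R)\ra)\bigr)$ appearing in \eqref{PolW}. The $\arg(w)$ summand arises because the branch of $\Lambda_0$ near $k=1$ depends on which side of the vanishing cycle one approaches, and this is where one fixes the determination consistent with the choice made for $S$ in \S\ref{sec:taylor}. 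Matching the $-\ln|w|$ coefficient against the ratio of the imaginary and real parts of the eigenvalue of the focus-focus point gives the prefactor $-(R+t-2Rt)/\ra$, which also provides a useful cross-check via the general formulas of Dullin \& \vungoc in \cite{DV}.

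Finally, the higher-order coefficients $v_{mn}$ and $v_{Lmn}$ come out by a single pass of series multiplication and collection in \emph{Mathematica}, exactly as in Lemma \ref{lemT}; the only care needed is in keeping $\ra$, $R$, $t$ symbolic so as not to lose the rational structure. The main obstacle I anticipate is not the mechanical expansion but the constant-term identification: the two circular $\Pi$ integrals must be combined carefully so that the branch of each $\Lambda_0$ is consistent with the branch of $\arg(w)$ inherited from the convention $\sigma_2(0)\in[0,2\pi[$ fixed after \eqref{sigma}, and so that the two $\arctan$ contributions merge into a single one rather than leaving a spurious constant.
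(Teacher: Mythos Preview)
Your proposal is correct and follows essentially the same approach as the paper: start from the expression \eqref{expWW} of $\mcW$ as a sum of two complete elliptic integrals of the third kind, pass to Heuman's $\Lambda_0$ via the circular-case formulas of Appendix~\ref{sec:ellipticInt}, feed in the expansions \eqref{expK1}, \eqref{expL1}, the root series \eqref{pol2Root1}--\eqref{pol2Root3}, the modulus \eqref{polKjl}, and the Birkhoff normal form from Lemma~\ref{BNF}, then collect terms. Your additional remarks on combining the two $\arctan$ contributions, the origin of $\arg(w)$ from the branch of $\Lambda_0$, and the eigenvalue-ratio cross-check are sound elaborations that the paper's terse proof leaves implicit.
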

\begin{proof}
We repeat the procedure of the calculation of the Taylor series of the reduced period that we used in Lemma \ref{lemT}. In \eqref{expWW} we find the expression of the rotation number $\mcW$ in terms of two Legendre canonical elliptic integral of third kind. We substitute these elliptic integrals by Heuman's Lambda functions as described in Appendix \ref{sec:ellipticInt}, whose Taylor expansion is given in \eqref{expL1}. The expansion depends on the elliptic modulus $k$ and the characteristics $n_l,n_{l+2}$, which we expressed in terms of the roots $\ze_1, \ze_2,\ze_3$ of the polynomial $P(p_2)$ in \eqref{kk} and \eqref{chara} respectively. In Lemma \ref{polRoots} we have expressed these roots in terms of $h$, $l$ and $\rg$.

Since we want the expansion of $\mcW$ as a function of $l$ and $j$, we substitute $h$ by the Birkhoff normal form $\mcB(l,j)$ found in Lemma \ref{BNF} and $\rg$ by the series \eqref{polRBljser} to eliminate the dependence on $\rg$. This way we obtain the series \eqref{PolW}.
\end{proof}

Combining now Theorem \ref{theder} with expressions \eqref{expTa}, \eqref{expWa} and Lemmas \ref{lemT}, \ref{lemW}, we calculate the partial derivatives of the symplectic invariant.

\begin{co}
The partial derivative of the (scaled) Taylor series invariant with respect to the coordinate $l$ is
\begin{equation}
\begin{aligned}
\frac{\partial \mcS}{\partial l} &=-\pi + \arctan \left( \dfrac{t - R (1 + t) - R^2 (1 - 2 t)}{(1 - R) \ra} \right) \\&+ \dfrac{1}{8 R \ra^3 (t + R (-1 + 2 t))} (\mu_{10} l + \mu_{01} j)\varepsilon + \mcO (\varepsilon^2).
\label{poldsdl}
\end{aligned}
\end{equation}
\end{co}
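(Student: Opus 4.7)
The plan is to prove the corollary by direct substitution of the expansions from Lemmas \ref{BNF}, \ref{lemT}, and \ref{lemW} together with equations \eqref{expTa} and \eqref{expWa} into the formula of Theorem \ref{theder}, namely
\begin{equation*}
\frac{\partial \mcS}{\partial l} = 2\pi\left(\mcW^\alpha(l,h)\, \frac{\mcT(l,h)}{\mcT^\alpha(l,h)} - \mcW(l,h)\right)\bigg|_{h=\mcB(l,j)} + \arg(w),
\end{equation*}
and then collecting terms order by order in $(l,j)$. First I would substitute $h = \mcB(l,j)$ into the analytic expansions of $\mcT^\alpha$ and $\mcW^\alpha$ from \eqref{expTa} and \eqref{expWa}; these remain power series in $(l,j)$ with no $\ln|w|$ or $\arg(w)$ contribution. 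Next I would form the quotient $\mcT/\mcT^\alpha$: since $\mcT$ has the structure ``analytic in $(l,j)$ plus $(\ln|w|-C_0)$ times analytic in $(l,j)$'' while $\mcT^\alpha$ is purely analytic, the quotient inherits the same structure and can be expanded as a geometric series in the analytic corrections.

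Multiplying this quotient by the analytic series for $\mcW^\alpha|_{h=\mcB}$ and subtracting $\mcW$ gives the bulk of the formula. Here the key conceptual point is that the $\ln|w|$ contributions coming from $\mcW^\alpha\mcT/\mcT^\alpha$ and from $\mcW$ must cancel, and the $\arg(w)$ contribution in $\mcW$ must be absorbed by the explicit $+\arg(w)$ in Theorem \ref{theder}; both cancellations are forced by the smoothness of $\mcS$ (Definition \ref{defS}) and will serve as useful internal consistency checks on the calculation. After these cancellations, the constant term of $\partial \mcS/\partial l$ reduces to the $(l,j)=(0,0)$ value of the polynomial part of $-2\pi\mcW$, which by Lemma \ref{lemW} equals
\begin{equation*}
-\pi + \arctan\left( \dfrac{t - R (1 + t) - R^2 (1 - 2 t)}{(1 - R) \ra}\right),
\end{equation*}
the claimed leading term.

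The linear-in-$(l,j)$ coefficient is then obtained by collecting all first-order contributions: those intrinsic to the expansions of $\mcW$ and $\mcW^\alpha$, those coming from the first-order expansion of $\mcT/\mcT^\alpha$ (including the effect of $1/\mcT^\alpha$), and those produced by propagating the Birkhoff series $\mcB(l,j)$ through the substitutions $h\mapsto \mcB$ in $\mcW^\alpha$ and $\mcT^\alpha$. The hard part is not conceptual but computational: the intermediate expressions depend on the three parameters $R$, $t$, $\ra$ and involve the many polynomial coefficients $c_{mn}$, $a_{mn}$, $h_{mn}$, $h_{Lmn}$, $v_{mn}$, $v_{Lmn}$ listed in Appendix \ref{apcoef}, so the bookkeeping is considerable. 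I would carry out the manipulations in \emph{Mathematica}, using the mandatory cancellation of $\ln|w|$ and $\arg(w)$ as a sanity check, to arrive at the stated coefficient $\frac{1}{8R\ra^3(t+R(-1+2t))}(\mu_{10}l + \mu_{01}j)$ at order $\varepsilon$.
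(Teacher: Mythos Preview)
Your proposal is correct and follows essentially the same approach as the paper: the corollary is stated immediately after the sentence ``Combining now Theorem \ref{theder} with expressions \eqref{expTa}, \eqref{expWa} and Lemmas \ref{lemT}, \ref{lemW}, we calculate the partial derivatives of the symplectic invariant,'' and no further argument is given. Your write-up is in fact more explicit than the paper's, spelling out the cancellation of the $\ln|w|$ and $\arg(w)$ terms and the use of \emph{Mathematica} for the bookkeeping, all of which is implicit in the paper.
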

\begin{co}
The partial derivative of the (scaled) Taylor series invariant with respect to the coordinate $j$ is
\begin{equation}
\begin{aligned}
\frac{\partial \mcS}{\partial j} =& \ln \left( \frac{4 \ra^3}{R^{3/2}(1-t)t^2} \right) + \dfrac{1}{8 R \ra^3} (\ka_{10}l + \ka_{01}j)\varepsilon \\&+ \dfrac{1}{64 R^2 \ra^6} (\ka_{20}l^2 + \ka_{11} l j + \ka_{02} j^2)\varepsilon^2 + \mcO (\varepsilon^3).
\label{poldsdj}
\end{aligned}
\end{equation}
\end{co}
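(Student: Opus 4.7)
The plan is to apply the second identity of Theorem \ref{theder},
\begin{equation*}
\frac{\partial \mcS}{\partial j} = 2\pi\,\frac{\mcT(l,h)}{\mcT^\alpha(l,h)}\bigg|_{h=\mcB(l,j)} \hspace{-0.2cm}+\, \ln|w|,
\end{equation*}
and reduce everything to algebraic manipulation of power series already computed earlier in the section. Specifically, I would first substitute the Birkhoff normal form $h=\mcB(l,j)$ from Lemma \ref{BNF} into the expansion \eqref{expTa} of $\mcT^\alpha(l,h)$ in order to express it as a formal series in $\varepsilon$ whose coefficients are polynomials in $l,j$ (with the rest of the dependence on $R$, $t$ and $\ra$). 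The real period $\mcT(l,j)$ is already given as such a series in Lemma \ref{lemT}.

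Next, I would form the quotient $\mcT/\mcT^\alpha$ as a formal power series. This is possible since $\mcT^\alpha$ begins with the nonzero constant $2\pi\cdot 2R/\ra$, so its multiplicative inverse exists and can be computed order by order; multiplication by $2\pi\,\mcT$ and the addition of $\ln|w|$ then give the desired series. At leading order the coefficient $-2R/\ra$ of the logarithmic term in $\mcT$ (see Lemma \ref{lemT}) divided by $\mcT^\alpha/(2\pi) = 2R/\ra + \mcO(\varepsilon)$ produces precisely $-(\ln|w| - \ln(4\ra^3/(R^{3/2}(1-t)t^2)))$, which cancels against $+\ln|w|$ and leaves the advertised constant term $\ln(4\ra^3/(R^{3/2}(1-t)t^2))$.

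For the higher order terms, the cancellation of all logarithmic contributions is guaranteed by the fact that $\mcS$ is smooth at the origin by Definition \ref{defS}: the piece of $\mcT$ multiplying $(\ln|w|-\ln C)$ must equal $-\mcT^\alpha/(2\pi)$ as a formal series, so when dividing and adding $\ln|w|$ only the smooth parts survive. What remains is a routine, if tedious, expansion of the quotient of the non-logarithmic parts, which produces the polynomial coefficients $\ka_{10}l+\ka_{01}j$ at order $\varepsilon^1$ and $\ka_{20}l^2+\ka_{11}lj+\ka_{02}j^2$ at order $\varepsilon^2$.

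The main obstacle is not conceptual but computational: each coefficient $\ka_{mn}$ depends polynomially on $R$, $t$ and $\ra$, and tracking these dependencies through the series inversion and multiplication requires care. As already done throughout the section for the preceding expansions, these symbolic manipulations are carried out with \emph{Mathematica}; the explicit coefficients $\ka_{mn}$ are collected in Appendix \ref{apcoef}.
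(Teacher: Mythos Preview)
Your proposal is correct and follows the same approach as the paper: the corollary is obtained by combining Theorem \ref{theder} with the expansion \eqref{expTa} of $\mcT^\alpha$ and the expansion of $\mcT$ from Lemma \ref{lemT}, forming the quotient as a formal power series and adding $\ln|w|$, with the symbolic manipulations carried out in \emph{Mathematica}. Your additional remark on why the logarithmic contributions cancel (via the smoothness of $\mcS$) is a helpful observation that the paper leaves implicit.
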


From these two corollaries we reconstruct the invariant. Exceptionally, in the formulation of Theorem \ref{theoinv} and Theorem A, $l$ and $j$ denote the values of the unscaled functions $L$ and $J$.

\begin{theo}
\label{theoinv}
The Taylor series symplectic invariant of the coupled angular momenta is given by 
\begin{align*}
S(l,j) =& \, l \arctan \left( \dfrac{{R_2}^2 (-1 + 2 t) - R_1 R_2 (1 + t) + {R_1}^2 t}{(R_1 - R_2)R_1 \ra} \right)+j \ln \left( \frac{4 {R_1}^{5/2}\ra^3}{{R_2}^{3/2}(1-t)t^2} \right) \\ \nonumber
+& \dfrac{l^2}{16 {R_1}^4R_2 \ra^3} \left( - {R_2}^4 (-1 + 2 t)^3 +R_1 {R_2}^3 (1 - 17 t + 46 t^2 - 32 t^3)\right. \\ \nonumber &\hspace{2.2cm} \left. -3 {R_1}^2 {R_2}^2 t (1 - 7 t + 4 t^2)   
 +{R_1}^3 R_2 (3 - 5 t) t^2- {R_1}^4 t^3   \right) \\ \nonumber
+& \dfrac{lj}{8 {R_1}^3 R_2\ra^2} \left( (R_2 - R_1)({R_2}^2 (1 - 2 t)^2   + 2 R_1 R_2 t (-1 + 6 t)+{R_1}^2 t^2) \right)\\ \nonumber
+& \dfrac{j^2}{16 {R_1}^4 R_2 \ra^3} \left( 
  {R_2}^4 (-1 + 2 t)^3 
- R_1 {R_2}^3 (1 + 15 t - 42 t^2 + 16 t^3)
\right. \\ \nonumber &\hspace{2.2cm} \left.
+  {R_1}^2 {R_2}^2 t (3 + 3 t - 28 t^2) 
+ {R_1}^3 R_2 t^2 (-3 + 13 t)
+ {R_1}^4 t^3
 \right)
+ \mcO(3),
\end{align*}
where $\ra$ is as in Definition \ref{sqr}, $l$ is the value of the (unscaled) first integral $L$ and $j$ is the value of Eliasson's (unscaled) $Q_2$ function.
\end{theo}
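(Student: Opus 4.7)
The plan is to integrate the two partial derivatives $\partial\mcS/\partial l$ and $\partial\mcS/\partial j$ of the \emph{scaled} invariant $\mcS$ obtained in equations \eqref{poldsdl} and \eqref{poldsdj} of the corollaries immediately above, and then translate the resulting series back to the original, \emph{unscaled} variables $(L,J)$. Since a formal Taylor series in two variables is uniquely determined by its two first partial derivatives up to an additive constant, and Definition \ref{defS} fixes $\mcS(0,0)=0$, this integration is purely algebraic bookkeeping: the linear coefficient of $l$ (resp.\ $j$) in $\mcS$ equals the constant term of \eqref{poldsdl} (resp.\ \eqref{poldsdj}); the coefficients of $l^2$ and $j^2$ in $\mcS$ are one half of the $l$- and $j$-linear contributions of \eqref{poldsdl} and \eqref{poldsdj} respectively; and the coefficient of $lj$ can be read off from either corollary.

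The redundancy in the $lj$ coefficient provides a nontrivial consistency check. Abstractly the two extractions must agree because of the closedness of the $1$-form $\sigma$ in \eqref{sigma}, proved by \vungoc in \cite{Vu1}; concretely, their agreement confirms that no algebraic error propagates through the chain of expansions (Lemmas \ref{BNF}, \ref{lemT}, \ref{lemW}, together with \eqref{expTa} and \eqref{expWa}) that feed into Theorem \ref{theder}. Carrying this out order by order produces $\mcS(l,j)$ up to total degree two, with no further work than substituting the polynomial expressions listed in Appendix \ref{apcoef}.

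The final step undoes the scaling \eqref{chan2}. The unscaled symplectic form equals $R_1$ times the scaled one, and the Eliasson normal-form coordinates rescale accordingly; a direct calculation from the definition \eqref{inv5} using $W = R_1 w$ (so $L = R_1 l$, $J = R_1 j$) yields the conversion rule $S(L,J) = R_1\,\mcS(L/R_1,\,J/R_1) + J \ln R_1$. The additive term $J \ln R_1$ is precisely what promotes the scaled logarithm $\ln(4\ra^3/(R^{3/2}(1-t)t^2))$ to the form $\ln(4 R_1^{5/2}\ra^3/(R_2^{3/2}(1-t)t^2))$ displayed in the theorem, once $R=R_2/R_1$ is substituted and the denominators are cleared. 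For the arctan in the linear $L$ coefficient, the constant term $-\pi + \arctan(\cdot)$ of \eqref{poldsdl} differs from the form displayed by an integer multiple of $\pi$, absorbed into a branch choice of $\arctan$ fixed by the normalisation $\partial S/\partial z_1(0)\in[0,2\pi[$ of Definition \ref{defS}. The quadratic coefficients transform by the straightforward replacement $R\mapsto R_2/R_1$ together with a power of $R_1$ from the rescaling, which produces the denominators $16 R_1^4 R_2 \ra^3$, $8 R_1^3 R_2 \ra^2$ appearing in the statement.

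All the heavy lifting has already been done in the preceding lemmas — the residue calculation of $\mcJ$, the Legendre reduction of $\mcT$ and $\mcW$, the inversion $h=\mcB(l,j)$, and the expansion of the elliptic modulus and characteristics as functions of $(l,j)$ — so no conceptual obstacle remains. The hard part is combinatorial: propagating the many polynomial coefficients $a_{mn}, b_{mn}, c_{mn}, d_{mnk},\ldots$ of Appendix \ref{apcoef} through each substitution without error, which is where the use of \emph{Mathematica} becomes essential. The built-in consistency check on the $lj$ coefficient, together with the correct appearance of the $\ln R_1$ shift in the linear $j$ coefficient, makes this bookkeeping verifiable.
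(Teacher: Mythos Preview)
Your proposal is correct and follows essentially the same route as the paper: integrate the partial derivatives \eqref{poldsdl} and \eqref{poldsdj} using $\mcS(0,0)=0$ and the mod-$\pi$ freedom in the linear $l$ coefficient to absorb the $-\pi$, then undo the scaling \eqref{chan2} via exactly the conversion rule $S(\tilde l,\tilde j)=R_1\,\mcS(\tilde l/R_1,\tilde j/R_1)+\tilde j\ln R_1$ that the paper derives. Your additional remarks about the $lj$ consistency check and the role of \emph{Mathematica} are sound commentary but not extra mathematical content.
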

\begin{proof}
We integrate expressions \eqref{poldsdj} and \eqref{poldsdl}. By definition of the Taylor series invariant the independent term is 0. Since by construction the linear coefficient in $l$ is defined mod $\pi$, we can ignore the $-\pi$ in equation \eqref{poldsdl}.

\begin{equation}
\begin{aligned}
\mcS(l,j) =& \arctan \left( \dfrac{ R^2 (-1 + 2 t) - R (1 + t) +t}{(1 - R) \ra} \right)l+\ln \left( \frac{4 \ra^3}{R^{3/2}(1-t)t^2} \right)j \\ 
+& \dfrac{l^2}{16 R \ra^3} \left(- R^4 (-1 + 2 t)^3+ 
 R^3 (1 - 17 t + 46 t^2 - 32 t^3)    \right. \\ &\hspace{1.5cm} \left.- 3 R^2 t (1 - 7 t + 4 t^2)+ R (3 - 5 t) t^2 - t^3\right) \\ 
+& \dfrac{lj}{{8 R \ra^2}} (-1 + R) ( R^2 (1 - 2 t)^2 + 2 R t (-1 + 6 t)+t^2 ) \\ 
+& \dfrac{j^2}{16 R \ra^3} \left( R^4 (-1 + 2 t)^3 - 
 R^3 (1 + 15 t - 42 t^2 + 16 t^3)   \right. \\  &\hspace{1.5cm} \left. + 
 R^2 t (3 + 3 t - 28 t^2) + R t^2 (-3 + 13 t)+t^3\right) + \mcO(3).
\end{aligned}
\label{Seqq}
\end{equation}
The last step is to revert the scaling introduced in equation \eqref{chan2} to obtain the unscaled symplectic invariant from $\mcS(l,j)$. In other words, we have to go from scaled functions $\mcL,\mcI,\mcJ,\mcB, \mcS$ and scaled variables $l,j,w$ to unscaled functions $L,I,J,B, S$ and unscaled variables $\tilde{l},\tilde{j},\tilde{w}$. The unscaled symplectic invariant will then be
\begin{align*}
	S(\tilde{l},\tilde{j})&=2\pi I(\tilde{l},B(\tilde{l},\tilde{j})) - 2\pi I(0,0) + \imp(\tilde w \ln \tilde w - \tilde w) \\
	&= 2\pi R_1 \mcI (l,\mcB (l,j)) - 2\pi R_1 \mcI (0,0) + R_1 \imp (w \ln w - w) + R_1 j \ln R_1 = \\
	&= R_1 \mcS(l,j) + R_1 j \ln R_1 = R_1 \mcS \left( \frac{\tilde{l}}{R_1},\frac{\tilde{j}}{R_1} \right) + \tilde{j} \ln R_1,
\end{align*}
 where we used that $\tilde w \ln \tilde w - \tilde w = R_1 w \ln (R_1 w) - R_1 w = R_1 w \ln w - R_1 w + R_1 j \ln R_1$. 
 We use equation \eqref{Seqq} % and $r_D = R_1 r_A$ 
 to obtain the  unscaled invariant $S(\tilde{l},\tilde{j})$. % From now on we drop the tildes to make the notation lighter.
 Now we drop the tildes to obtain the final result as stated in this Theorem and in Theorem A.
\end{proof}

\subsection{Properties of the Taylor series invariant} 
We discuss now some consequences of Theorem \ref{theoinv}. 

\subsubsection*{\textbf{The reverse coupled angular momenta}} 
In \S \ref{sec:CAM} we have defined the reverse coupled angular momenta as the semitoric system given by the standard coupled angular momenta but with the condition $R_1 > R_2$. Both semitoric systems are isomorphic by changing the sign of $L$ (Proposition \ref{proptrans}). Sepe and \vungoc describe in \cite{SV} how the calculation of the Taylor series is affected by changing the signs of the two variables. In particular, they describe an action of the group $K_4 = \Z_2 \times \Z_2$ on the space of Taylor series of focus-focus points given by 
\begin{equation}
\begin{array}{ccl}
\quad \text{\textbf{Element of }} K_4 \quad &\quad \text{\textbf{Signs}} \quad &\quad \text{\textbf{Taylor series}} \quad \\[0.1cm]
(0,0) & (+,+) & \;\;\,S(X,Y) \\[0.1cm]
(1,0) & (-,+) & \;\;\,S(-X,Y) + \pi X \\[0.1cm]
(0,1) & (+,-) & -S(X,-Y) \\[0.1cm]
(1,1) & (-,-) & -S(-X,-Y)+\pi X,\\[0.1cm] 
\end{array}
\label{septrans} 
% We can also use the command \setstrech{1.5} and \setstrech{1.0} from the package setspace
\end{equation} where the first column indicates an element of $K_4$, the second column the corresponding sign changes that we do on the Eliasson's coordinates \eqref{qs} and the third column the effect on the Taylor series. In particular, since the first Eliasson coordinate coincides with the first component of the energy-momentum map, $Q_1=L$, a change in the sign of $L$ changes the series by $S(X,Y) \mapsto S(-X,Y) + \pi X$. As a consequence, we can immediately calculate the Taylor series invariant of the reverse coupled angular momenta.

\begin{co}
\label{Taylinv} Let $S_{(R_1,R_2,t)}(l,j)$ denote the Taylor series invariant of the standard coupled angular momenta with parameters $R_1, R_2, t$ given in Theorem \ref{theoinv}, where $R_1 < R_2$. Then,
$$ S_{(R_1,R_2,t)}(l,j) = S_{(R_2,R_1,t')}(-l,j),\quad t' := \dfrac{R_1 t}{R_2 + R_1 t - R_2 t}. $$
\end{co}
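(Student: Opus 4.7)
My plan is to combine Proposition \ref{proptrans}, which establishes a semitoric isomorphism between the standard and reverse coupled angular momenta (with $L$ flipped), with the Sepe--Vu Ngoc transformation rule \eqref{septrans} for sign changes of the Eliasson coordinates. The main task is to pin down the precise parameter correspondence $t \leftrightarrow t'$ that Proposition \ref{proptrans} produces but does not state explicitly.

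To this end I would work through the intermediate system $(L', H')$ from the proof of Proposition \ref{proptrans} on both sides. The standard CAM$(R_1, R_2, t)$ is semitoric-equivalent, via the linear shear in the proposition, to a system $(L'_{12}, H'_{12})$ with parameter $b_{12} = (1-t) R_2/(R_1+R_2)$ on $(M, \omega_{R_1, R_2})$; analogously, the reverse CAM$(R_2, R_1, t')$ is semitoric-equivalent to $(L'_{21}, H'_{21})$ with $b_{21} = (1-t') R_1/(R_1+R_2)$ on $(M, \omega_{R_2, R_1})$. A direct substitution shows that the symplectomorphism $\Xi$ from the proof of Proposition \ref{proptrans} sends $L'_{12}$ to $-L'_{21}$ and preserves the functional form of $H'_{12}$, though retaining the original parameter $b_{12}$ rather than $b_{21}$. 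Matching the resulting $\Xi^* H'_{12}$ with $\alpha H'_{21} + \mathrm{const}$ via a further linear shear reduces to the two scalar equations $\alpha t' = t$ and $\alpha b_{21} = b_{12}$, whose unique solution is $\alpha = t/t' > 0$ together with
\begin{equation*}
t' = \frac{R_1 t}{R_2 + R_1 t - R_2 t},
\end{equation*}
exactly the formula in the statement.

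Finally, I would invoke the Sepe--Vu Ngoc rule \eqref{septrans}: flipping the sign of the first Eliasson coordinate $Q_1 = L$ transforms the Taylor series invariant by $S(X, Y) \mapsto S(-X, Y) + \pi X$. Since the linear coefficient in the first variable is defined modulo $\pi$ (as used at the end of the proof of Theorem \ref{theoinv}), the $\pi X$ correction is absorbed and the desired identity $S_{(R_1, R_2, t)}(l, j) = S_{(R_2, R_1, t')}(-l, j)$ follows. The main obstacle is the algebraic identification of $t'$ above; the rest consists of observing that no higher-order corrections are needed in the matching shear, which is automatic because both $H'_{12}$ and $H'_{21}$ depend linearly on the same functions (the coupling $x_1 x_2 + y_1 y_2 + z_1 z_2$ and $z_1 - z_2$), so matching reduces to the two scalar equations above.
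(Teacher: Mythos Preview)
Your proposal is correct and follows essentially the same approach as the paper: invoke Proposition \ref{proptrans} to obtain the semitoric isomorphism between the standard and reverse systems with the sign of $L$ flipped, then apply the Sepe--\vungoc rule \eqref{septrans} for the sign change of $Q_1=L$. The paper's proof is terser --- it simply asserts that the isomorphism ``swaps $R_1$ and $R_2$ and changes $t$ to $t'$'' without deriving $t'$, and notes that the identity can alternatively be checked by direct substitution into the explicit formula of Theorem \ref{theoinv} --- whereas you supply the missing algebraic step by matching the two intermediate $(L',H')$ systems via a positive scaling, which is a genuine improvement in completeness.
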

\begin{proof}
Proposition \ref{proptrans} establishes that the standard coupled angular momenta and the reverse coupled angular momenta are isomorphic as semitoric systems, except for a sign change in $L$. The isomorphism swaps $R_1$ and $R_2$ and changes $t$ to $t'$. This implies that the Taylor series invariant, together with the other four semitoric invariants, must coincide. Alternatively, it can also be verified by direct substitution in the Taylor series in Theorem \ref{theoinv}.
\end{proof}

\subsubsection*{\textbf{The Kepler problem}}
The Taylor series invariant of the coupled angular momenta can also be used in the analysis of the Kepler problem, the classical analogue of the hydrogen atom. This problem is known to be separable in four different systems of coordinates: spherical, parabolic, prolate spheroidal and sphero-conical coordinates (cf.\ Cordani \cite{Co}). The second author and Waalkens proved in \cite{DW} that the Kepler problem in prolate spheroidal coordinates has a pinched torus and as a consequence it exhibits monodromy for a certain energy range.

The Kepler problem in prolate spheroidal coordinates is a completely integrable system defined on $T^*\R^3$ with momentum map $(H,L_z,G)$, where $H$ is the Hamiltonian, $L_z$ is the $z$-component of the angular momentum, $G = {L}^2 + 2 a e_z$, $a$ is a parameter and $e_z$ is the $z$-component of the Laplace-Runge-Lenz vector. After performing symplectic reduction on the Hamiltonian flow, we obtain $\mbS^2 \times \mbS^2$ as reduced phase space, see Dullin $\&$ Waalkens \cite{DW} for more details. The reduced system can be expressed in coordinates $(x_1,y_1,z_1,x_2,y_2,z_2) \in \mbS^2 \times \mbS^2$ as
\begin{equation}
\begin{cases}
L_z(x_1,y_1,z_1,x_2,y_2,z_2)\;:= n (z_1+z_2), \\
G\;(x_1,y_1,z_1,x_2,y_2,z_2)\;:= (x_1x_2 + y_1y_2 + z_1z_2) +\ja{c}(z_1-z_2),\\
\end{cases}
\label{kep1}
\end{equation} where $n$ and $c$ are positive constants and $\om_K = -n\, (\om_{\mbS^2} \oplus \om_{\mbS^2})$.

\begin{pro}
The reduced Kepler problem \eqref{kep1} is isomorphic to the coupled angular momenta with $R_1=R_2$. 
\label{propkep}
\end{pro}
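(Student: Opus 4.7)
The plan is to exhibit an explicit isomorphism in the sense of semitoric systems by taking the identity map $\psi = \mathrm{id}$ on $M = \mbS^2 \times \mbS^2$ after identifying parameters appropriately. First, I would set $R_1 = R_2 = n$ in \eqref{sys1}, so that the symplectic form $\om = -n(\om_{\mbS^2} \oplus \om_{\mbS^2})$ coincides with $\om_K$. Under this identification the first integral $L = R_1(z_1-1) + R_2(z_2+1)$ collapses to $n(z_1+z_2) = L_z$, so the two $\mbS^1$-momentum maps agree on the nose, satisfying the first component of the semitoric isomorphism condition $\psi^*(L,H) = (L_z, g(L_z,G))$ recalled in \S\ref{sec:intro}.

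The main algebraic step is to match the second integrals via a smooth function $g$. I would rewrite
$$H = tG - tc(z_1-z_2) + (1-t)z_1 + (2t-1)$$
and decompose $H - tG$ in the basis $\{z_1+z_2,\,z_1-z_2\}$. Since $L_z$ depends only on $z_1+z_2$, the coefficient of the antisymmetric piece $z_1 - z_2$, which equals $\tfrac{1}{2}(1-t-2tc)$, must vanish for $H$ to become a function of $(L_z, G)$ alone. This forces the parameter correspondence
$$t = \frac{1}{1+2c}, \qquad \text{equivalently}\qquad c = \frac{1-t}{2t},$$
and the remaining symmetric piece then yields
$$H = tG + \frac{1-t}{2n}L_z + (2t-1).$$
Setting $g(l,h) := th + \tfrac{1-t}{2n}\,l + (2t-1)$ I then verify that $\psi^*(L,H) = (L_z,\,g(L_z, G))$ and $\partial g/\partial h = t > 0$ throughout the focus-focus range $(t^-,t^+)$, which are precisely the conditions for a semitoric isomorphism.

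Since the derivation is pure linear algebra there is no substantial obstacle; the only point worth verifying separately is that the parameter correspondence is admissible, i.e.\ that the physical range $c > 0$ of the Kepler parameter corresponds bijectively to $t \in (0,1)$ (immediate from $t = 1/(1+2c)$), and that under this bijection the focus-focus interval $(t^-, t^+)$ for $R_1 = R_2$ pulls back to a nonempty range of $c$-values, so that the semitoric structure and the monodromy phenomenon observed in \cite{DW} are genuinely the same object viewed under two different parametrisations.
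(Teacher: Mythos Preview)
Your proof is correct and essentially parallel to the paper's, though packaged differently. The paper invokes its earlier Proposition~\ref{proptrans}, where the auxiliary system $(L',H')$ with $H' = t(x_1x_2+y_1y_2+z_1z_2) + b(z_1-z_2)$ was already shown to be semitorically isomorphic to the coupled angular momenta via an affine map $g(l',h') = h' + \gamma l'$; it then simply observes that for $R_1=R_2=n$ one has $L'=L_z$ and $H' = tG$ (with $b = tc$), so the Kepler system inherits the isomorphism. You instead compute the affine relation $H = tG + \tfrac{1-t}{2n}L_z + (2t-1)$ directly, bypassing the intermediate system. Both routes amount to the same linear algebra and yield the same parameter correspondence $c = (1-t)/(2t)$; your version is self-contained, while the paper's reuses machinery already in place for the reverse-case analysis.
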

\begin{proof}
This follows immediately from Proposition \ref{proptrans}, since for $c=t b$ and $R_1 = R_2 = n$,  
the system $(L', H')$ defined in the proof satisfies $H'= t G$ and thus has the same invariants as the Kepler problem $(L_z, G)$.

\end{proof}

\begin{co}
The Taylor series invariant of the reduced Kepler problem in spheroidal coordinates is
\begin{align*}
S_K (l,j) &= \dfrac{\pi}{2} l + j \ln \left(16 n \sqrt{c} (2 - c)^{3/2} \right) \\& 
- \dfrac{ (-9 + 10 c - 2 c^2) }{16 n \sqrt{c} (2 - c )^{3/2}}l^2 - \dfrac{(-3 + 6 c + 2 c^2)}{16n \sqrt{c} (2 - c)^{3/2}}j^2 + \mcO(3).
\end{align*}
\end{co}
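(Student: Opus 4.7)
The plan is to specialize Theorem~\ref{theoinv} to the parameters of the coupled angular momenta that are isomorphic, by Proposition~\ref{propkep}, to the reduced Kepler problem. Setting $R_1 = R_2 = n$ in Proposition~\ref{proptrans} gives $b = (1-t)/2$, and the identification $H' = tG$ matches coefficients of $z_1 - z_2$ to force $b = tc$, hence $t = \frac{1}{1+2c}$. For $c \in (0,2)$ this $t$ lies in the focus--focus interval $(t^-,t^+) = (\tfrac{1}{5},1)$, so the singularity is non-degenerate and Theorem~\ref{theoinv} applies.

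Next, I would substitute $R_1 = R_2 = n$ and $t = \frac{1}{1+2c}$ into each coefficient of $S(l,j)$ and simplify. The mixed $lj$ coefficient vanishes immediately because it carries a factor $R_2 - R_1$. For the discriminant $\ra$, plugging $R = 1$ yields the factorization $\ra^2 = -(1-2t)^2 + 2t - t^2 = (1-t)(5t-1)$, which upon using $1 - t = \frac{2c}{1+2c}$ and $5t - 1 = \frac{2(2-c)}{1+2c}$ collapses to $\ra^2 = \frac{4c(2-c)}{(1+2c)^2}$. The linear $l$ coefficient is an $\arctan$ whose denominator vanishes at $R_1 = R_2$; since that coefficient is defined only modulo $\pi$, the limiting value $\pm\pi/2$ is equivalent to $\pi/2$. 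The linear $j$ coefficient $\ln\bigl(4n\ra^3/((1-t)t^2)\bigr)$ becomes, after inserting the closed forms for $\ra$, $1-t$ and $t$, exactly $\ln\bigl(16n\sqrt{c}(2-c)^{3/2}\bigr)$.

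For the quadratic terms, the same identifications reduce the $l^2$ and $j^2$ numerators of Theorem~\ref{theoinv} (evaluated at $R_1 = R_2 = n$) to cubic polynomials in $t$ sharing a common factor $(1-t)$: for $l^2$ the cubic factors as $2(1-t)(29t^2 - 12t + 1)$ and for $j^2$ as $-2(1-t)(1 + 4t - 11t^2)$. Dividing by $\ra^3$ and substituting $t = \frac{1}{1+2c}$ then produces the stated rational expressions in $c$, namely $\frac{9-10c+2c^2}{16n\sqrt{c}(2-c)^{3/2}}$ and $\frac{3-6c-2c^2}{16n\sqrt{c}(2-c)^{3/2}}$ respectively. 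The main obstacle is purely algebraic bookkeeping: the polynomials in $t$ must be factored to expose the $(1-t)$ cancellation that is required for the factors $(2-c)^{3/2}$ in the denominator to appear with the correct power, and the branch of $\arctan$ must be interpreted modulo $\pi$. No conceptual difficulty arises, and all simplifications are best carried out (and double-checked) in \emph{Mathematica}.
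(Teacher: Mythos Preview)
Your proposal is correct and follows exactly the paper's approach: the paper's proof is the one-line instruction ``substitute $R_1=R_2=n$ and equation \eqref{fort} into Theorem \ref{theoinv}'', and you have simply spelled out the algebra of that substitution (including the relation $b=tc$, hence $t=\tfrac{1}{1+2c}$, and the factorizations producing the stated $c$-dependence). There is no genuine difference in method.
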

\begin{proof}
We substitute $R_1=R_2=n$ and equation \eqref{fort} into Theorem \ref{theoinv}.
\end{proof}
 
The focus-focus points exists for $c \in\ ]0,2[$ and we observe that when 
the boundaries of this interval are approached the 
coefficients of the symplectic invariant diverge.
It is interesting to note that the leading order of the divergence is, however, different at the two endpoints. Another observation is that the Taylor series invariant of the Kepler problem does not have second order terms that are odd in $l$. More generally we obtain:

\begin{pro}
\label{prop25}
The Taylor series invariant of the Kepler problem does not have non-linear odd powers of $l$.
\end{pro}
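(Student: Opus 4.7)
The plan is to exploit a discrete symmetry of the Kepler problem that becomes non-trivial precisely when $R_1=R_2$.

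First I would invoke Corollary \ref{Taylinv} with the Kepler identification $R_1 = R_2 = n$ coming from Proposition \ref{propkep}. The parameter relation $t' = R_1 t /(R_2 + R_1 t - R_2 t)$ collapses to $t' = t$, so Proposition \ref{proptrans} now realises the Kepler system as isomorphic to \emph{itself}, via a symplectomorphism that flips the sign of $L$ (this is the content of the map $\Xi$ in the proof of Proposition \ref{proptrans}). By the transformation rule \eqref{septrans}, a sign change on the first Eliasson coordinate $Q_1 = L$ acts on the Taylor series by $S(X,Y) \mapsto S(-X,Y) + \pi X$. Since the Taylor series is a symplectic invariant, it must be fixed by this self-isomorphism, which forces the functional equation
\begin{equation*}
S_K(l,j) \;=\; S_K(-l,j) + \pi\, l.
\end{equation*}

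Next I would compare Taylor coefficients. Writing $S_K(l,j) = \sum_{m,n \ge 0} c_{mn}\, l^m j^n$, the functional equation becomes
\begin{equation*}
2 \sum_{m\ \text{odd},\, n} c_{mn}\, l^m j^n \;=\; \pi\, l.
\end{equation*}
Matching the $(m,n)=(1,0)$ coefficient gives $c_{10} = \pi/2$ (consistent with the explicit formula already displayed for $S_K$), and every other coefficient $c_{mn}$ with $m$ odd must vanish. These vanishing coefficients are precisely the non-linear terms of $S_K$ involving an odd power of $l$, proving the proposition.

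The only bookkeeping step that requires care is checking that the map $\Xi$ of Proposition \ref{proptrans}, specialised to $R_1=R_2=n$, does give a genuine self-isomorphism of the Kepler semitoric system (rather than merely an isomorphism onto an isomorphic copy) and that its induced action on the Taylor series is indeed the rule $S \mapsto S(-\cdot,\cdot) + \pi\,\cdot$ from \eqref{septrans}; no new computation is needed beyond tracking signs. Once this is in hand, the coefficient comparison above is immediate, and one may moreover sanity-check it against the explicit second-order expression for $S_K$ displayed just before the proposition, where indeed the $lj$ coefficient vanishes.
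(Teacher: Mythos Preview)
Your proposal is correct and follows essentially the same approach as the paper: both use Proposition \ref{proptrans} specialised to $R_1=R_2$ to obtain a self-isomorphism flipping the sign of $L$, then apply the transformation rule \eqref{septrans} to deduce $S_K(l,j)=S_K(-l,j)+\pi l$ and hence the vanishing of the non-linear odd-in-$l$ coefficients. Your write-up is slightly more detailed (you spell out the coefficient comparison and note the consistency check $c_{10}=\pi/2$), but the argument is the same.
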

\begin{proof}
The Kepler problem corresponds to the case $R_1=R_2$, which is common for both the standard coupled angular momenta and the reverse coupled angular momenta. Since they are related by a change of sign of $L$ (see Proposition \ref{proptrans}), the Taylor series must be invariant under the transformation $l \mapsto -l$. From equation \eqref{septrans}, we conclude that $S(l,j) = S(-l,j) + \pi l$, so all higher order terms that are odd in $l$ must vanish.
\end{proof}

\subsubsection*{\textbf{Asymptotic behaviour of the Taylor series invariant}}
We know that the singular point $m \in M$ is of focus-focus type only when $t \in\ ]t^+, t^-[$, where $t^\pm$ is defined in \eqref{sys1}. Since the Taylor series is an invariant associated only to focus-focus singularities, a reasonable question to ask is what happens to the Taylor series in the limits $t \to t^+$ and $t \to t^-$. 

\begin{re}
\label{reminf}
In the limit $t \to t^\pm$, the coefficients of the Taylor series invariant in Theorem \ref{theoinv} diverge. More specifically, the linear coefficient in $j$ and the quadratic coefficients go to $\infty$, while the linear coefficient in $l$, which is only defined mod $\pi$, goes to $\frac{\pi}{2}$.
\end{re}

This is an immediate consequence of the fact that 
$$ \lim_{t \to t^\pm} r_A =0,$$ as we can see from equation \eqref{RA}. We expect that the divergence of the coefficients of the Taylor series invariant is typical when approaching the Hamiltonian Hopf bifurcation. For more details on the Hamiltonian Hopf bifurcation as it appears in deformations of semitoric systems, see Dullin $\&$ Pelayo \cite{DP}. When approaching the Hopf bifurcation where the focus-focus point becomes degenerate the real part of the eigenvalue of the focus-focus point vanishes. Recall that in our case this is given by $\frac{r_A}{2R}$, and hence the coefficients of the Taylor series invariant diverge with $r_A \to 0$.

The linear term of the Taylor series invariant in $l$ has $\ra$ in the denominator of the argument of the arctan, while the numerator does not vanish when $t \to t^\pm$. The linear term in $j$ has $\ra$ in the numerator of the argument of the ln and therefore diverges. The quadratic coefficients have powers of $\ra$ in the denominator, while the numerators do not vanish when $t \to t^\pm$.

To understand how the Taylor series invariant behaves as we approach this limit, it is helpful to define the following coordinates on the space of parameters,
\begin{equation}
\begin{aligned}
u&:=-\dfrac{1}{2} \log \left( R \right) % =-\dfrac{1}{2} \log \left( \dfrac{R_2}{R_1} \right)
\\
v&:= \text{artanh} \left( \dfrac{R- t - 2R t}{2 \sqrt{R}\, t} \right) % =\text{artanh} \left( \dfrac{R_2-R_1 t - 2R_2 t}{2 \sqrt{R_1 R_2}\, t} \right),
\end{aligned}
\label{uv}
\end{equation}
as well as the scaling factor $\ka := \sqrt{R_1 R_2} >0$. This defines a diffeomorphism between $(R, t) \in \R^+ \times\ ]t^-, t^+[$ and $(u,v) \in \R^2$.
 The parameters $u,v$ conveniently adapt to the symmetries of the system.

\begin{re}
For fixed $R_1,R_2$, or equivalently $u,\ka$, the interval of existence of the focus-focus point $t^- < t < t^+$ corresponds to $-\infty < v < + \infty$. In particular, the limits $t \to t^{\pm} $ correspond to $v \to \pm \infty$.
\end{re} 

\begin{re}
The region $u<0$ corresponds to the usual coupled angular momenta and the region $u>0$ to the reverse coupled angular momenta. The case $u=0$ is the Kepler problem.
\end{re}

\begin{re}
The transformation described in Proposition \ref{proptrans} corresponds to changing $u \mapsto -u$ and leaving $v$ (and $\kappa$) unchanged.
	The parameters $u,v$ are defined so that the involution of Proposition \ref{proptrans} becomes a reflection.
\end{re}
 
The Taylor series invariant in these coordinates takes the form
\begin{align}
\hspace{1cm} S(l,j)= 
&\arctan \left( \dfrac{2e^{u} \cosh v + \sinh v + e^{2 u} \sinh v }{1-e^{2u}} \right)l \nonumber \\
&+ \ln \left( \dfrac{16 \ka}{\cosh^3 v\, (\cosh u + \tanh v)}\right) j \nonumber\\
&-\dfrac{1}{64\ka}(-7 \cosh v+3\cosh 3v+\cosh u)(11 \sinh v + 3 \sinh 3v)l^2 \\&-\dfrac{1}{8\ka}(1+3\cosh 2v )(\sinh u) lj \nonumber \\&-\dfrac{1}{64\ka}  (3 \cosh v + 17 \cosh 3v + \cosh u)(9 \sinh v + 17 \sinh 3v) j^2 + \mcO(3)\nonumber.
\end{align}

The dependence on $u$ takes the forms $\sinh u$, which is an odd function, and $\cosh u$, which is an even function. In particular, we see that the transformation in Proposition \ref{proptrans}, which corresponds to $u \mapsto -u$, changes the sign of the argument of the arctan in the linear coefficient of $l$ and changes also the sign of the quadratic term of $lj$, while leaving the rest unchanged, as expected from Corollary \ref{Taylinv}. For the Kepler problem, which corresponds to $u=0$, we see that the term quadratic of $lj$ vanishes, as expected from Proposition \ref{prop25}.

The dependence of the coefficients of the linear terms on the parameters $(u,v)$ is shown in Figure~\ref{linterms}. We see now what happens as we take the limit $v \to \pm \infty$, corresponding to the region where the focus-focus point --and therefore the Taylor series invariant-- is defined. 

The coefficient of $l$ can be rewritten in the form
\[
     -\frac{\pi}{2} + 2 \arctan( e^{-v} \tanh(u/2) ) \mod \pi,
\]
where the asymptotic behaviour for large $|v|$ becomes clear.

The coefficient of the linear term in $j$ diverges linearly to $-\infty$ when $v \to \pm \infty$. More precisely, for $v \gg 0$ we have
$$ \ln \left( \dfrac{16 \ka}{\cosh^3 v\, (\cosh u + \tanh v)}\right) \simeq -3v +\ln \left( \dfrac{64 \ka}{\cosh^2(\frac{u}{2})} \right) \to -\infty. $$ For $v\ll 0$ we must distinguish between the case $u \neq 0$, for which we have
$$ \ln \left( \dfrac{16\ka}{\cosh^3 v\, (\cosh u + \tanh v)}\right) \simeq 3v +\ln \left( \dfrac{64\ka}{\sinh^2(\frac{u}{2})} \right) \to -\infty$$ and the Kepler case $u=0$, for which we have
$$ \ln \left( \dfrac{16\ka}{ e^v \cosh^2 v }\right) \simeq  v +\ln \left( 64 \ka \right)\to -\infty. $$
This is consistent with the expected divergence of the coefficients in Remark \ref{reminf}. Note that while for $u\not = 0$ the leading order behaviour is the same up to a sign, 
for the Kepler case $u=0$ the leading order behaviour for $v \to +\infty$ and $v \to -\infty$ is different
by a factor of $-3$.

\begin{figure}[ht]
 \centering
\subfloat[Coefficient of $l$]{\includegraphics[width=6cm]{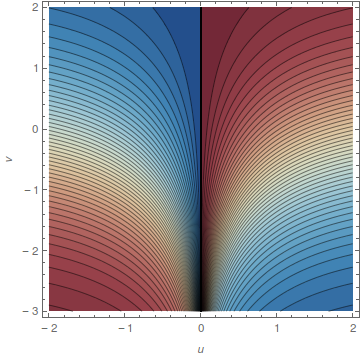}} \hspace{0.5cm}
\subfloat[Coefficient of $j$]{\includegraphics[width=6cm]{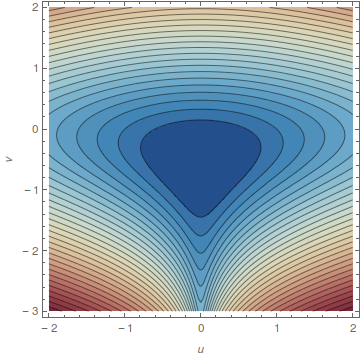}}
\caption{\small Contour plots of the coefficients of the linear terms of the Taylor series symplectic invariant as a function of $u,v$ while setting $\ka=1$. Blue colours represent positive values and red colours represent negative values.}
 \label{linterms}
\end{figure}

The coefficients of the quadratic terms are shown in Figure \ref{quadterms}.
The coefficient of $l^2$ behaves as
\[
    -\sign(v) \frac{9}{256 \kappa} e^{6 |v|} \text{ for $|v| \to \infty$}.
\]
The coefficient of $j^2$ behaves as
\[
    -\sign(v) \frac{289}{256 \kappa} e^{6 |v|} \text{ for $|v| \to \infty$}.
\]
The coefficient of $jl$ behaves as
\[
   - \frac{3}{16 \kappa} \sinh u \, e^{2 |v|}  \text{ for $|v| \to \infty$}.
\]

\begin{figure}[ht]
 \centering
\subfloat[Coefficient of $l^2$]{\includegraphics[width=6cm]{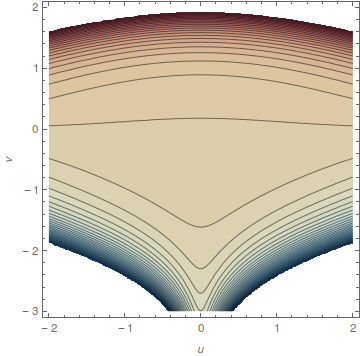}} \hspace{0.5cm}
\subfloat[Coefficient of $j^2$]{\includegraphics[width=6cm]{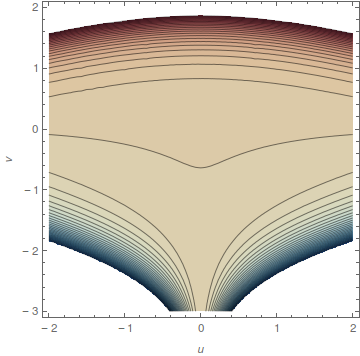}}\\
\subfloat[Coefficient of $lj$]{\includegraphics[width=6cm]{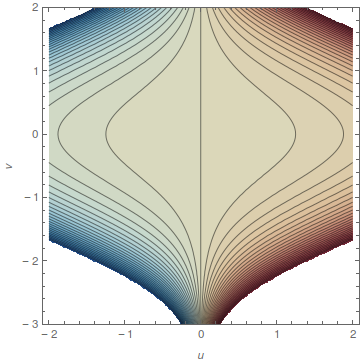}}
\caption{\small Contour plots of the coefficients of the quadratic terms of the Taylor series symplectic invariant as a function of $u,v$ while setting $\ka=1$. Blue colours represent positive values and red colours represent negative values.}
 \label{quadterms}
\end{figure}

\newpage
\section{The polygon invariant of the coupled angular momenta}
\label{sec:polpol}

In this section we compute the polygon invariant of the coupled angular momenta \eqref{sys1}. For the standard case $R_1<R_2$, Le Floch \& Pelayo already computed this in \cite{LFP}. We extend their results to the reverse case $R_1 > R_2$ and to the Kepler problem case $R_1 = R_2$. We will assume that we are in the case with one non-degenerate singularity of focus-focus type, i.e.\ $\nff=1$, thus $t^- < t < t^+$.

The polygon invariant consists of a collection of rational convex polygons, related by the action of the group $\Z_2 \times \mcG$ (cf.\ \S\ref{sec:polygon}). In Figure \ref{figpolstan} we can see some of them for the standard case $R_1<R_2$. The polygons on the same row are related by the $\Z_2$-action, i.e.\ they correspond to different choices of sign $\epsilon$ -- or equivalently of cut direction -- namely $\epsilon=+1$ on the left and $\epsilon=-1$ on the right. The polygons on the same column are related by the $\mcG$-action, i.e.\ by a linear transformation $T^k$ defined in \eqref{TT}.

\begin{figure}[ht]
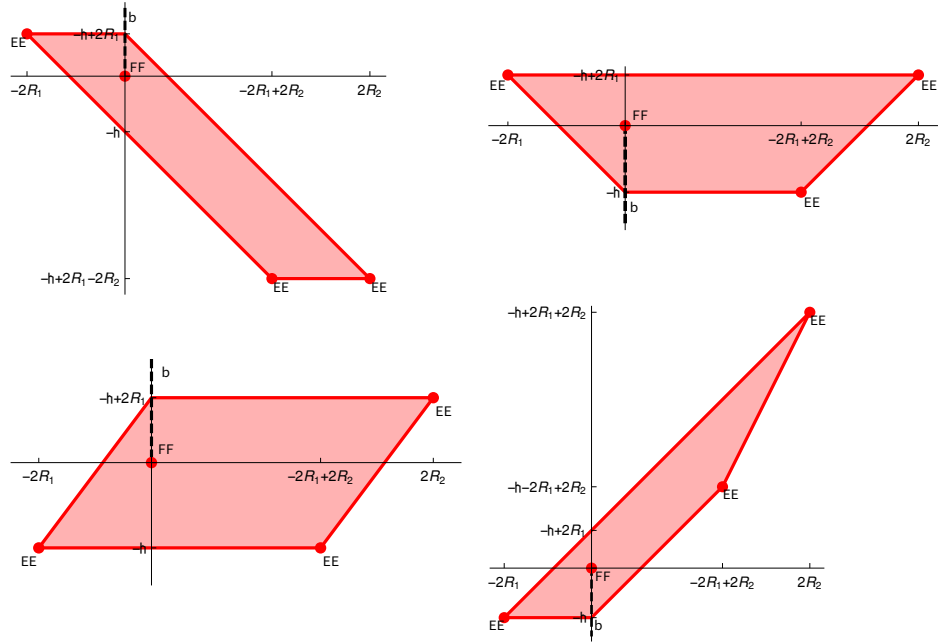

\begin{tabular}{m{6cm}m{6cm}}
\includegraphics[width=5cm]{pol-1+1ni.pdf} &
\includegraphics[width=6cm]{pol-1-1ni.pdf} \\
\includegraphics[width=6cm]{pol0+1ni.pdf} &
\includegraphics[width=4.5cm]{pol0-1ni.pdf}
\end{tabular}
\caption{Some elements of the polygon invariant for $R_1<R_2$, where $\mfh$ denotes the height invariant.}
\label{figpolstan}
\end{figure}
\begin{theo}
\label{theopol}
The polygon invariant of the coupled angular momenta for the reverse case $R_1 < R_2$ and the Kepler problem case $R_1 = R_2$ are given by the $(\Z_2 \times \mcG)$-orbits represented in Figures \ref{figpolrev} and \ref{figpolkep} respectively.
\end{theo}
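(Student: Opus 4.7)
The plan is to leverage the discrete symmetry of Proposition \ref{proptrans} together with the computation already carried out by Le Floch \& Pelayo \cite{LFP} for the standard case $R_1 < R_2$. For the reverse case $R_1 > R_2$, I will apply the diffeomorphism $\Xi$ from the proof of Proposition \ref{proptrans} to identify the reverse system with a standard coupled angular momenta (with the parameters $R_1, R_2$ swapped and $t$ replaced by $t'$), at the cost of reversing the sign of $L$. Since $L \mapsto -L$ induces a horizontal reflection on the image of the cartographic homeomorphism and this reflection does not belong to $\Z_2 \times \mcG$, the representatives for the reverse case are obtained from the standard ones of \cite{LFP} by horizontally reflecting each polygon and then regrouping into $(\Z_2 \times \mcG)$-orbits, which I will check reproduces Figure \ref{figpolrev}.

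For the Kepler case $R_1 = R_2$, the same symmetry argument applies, but now the transformation of Proposition \ref{proptrans} sends the parameters to themselves, so the involution $L \mapsto -L$ becomes an automorphism of the underlying semitoric system. This forces the polygon invariant to be invariant, as an equivalence class, under horizontal reflection, which means that each polygon and its mirror image must simultaneously occur in the invariant. One can obtain Figure \ref{figpolkep} either by taking a continuous limit $R_1 \to R_2$ from the standard case or by directly imposing this reflection symmetry on the standard polygon representatives and checking consistency against the direct computation of the vertices from the images of the elliptic-elliptic fixed points under $F=(L,H)$.

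The main obstacle will be two-fold: first, tracking how the ambiguity encoded by the $\Z_2 \times \mcG$-action interacts with the horizontal reflection induced by $L \mapsto -L$, since each orbit in the polygon invariant contains many representatives differing by a cut sign $\epsilon$ and a transformation $T^k$ from \eqref{TT}, and the reflection must be applied consistently across the whole orbit; and second, in the Kepler case, justifying that the reflection symmetry genuinely enlarges the orbit rather than coinciding with an element already present in it. As an independent sanity check I will verify the vertex coordinates and the integer slopes of the edges of each claimed polygon by computing directly the images under $F=(L,H)$ of the four corank-$2$ critical points $(0,0,\pm 1,0,0,\pm 1)$ from \eqref{sys1}, and by reading off the isotropy weights of the $\mbS^1$-action at each elliptic-elliptic fixed point, ensuring compatibility with the cartographic straightening around the focus-focus fibre dictated by Eliasson's normal form.
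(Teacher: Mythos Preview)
Your proposal is correct and follows essentially the same approach as the paper: for the reverse case you reflect the standard polygons of Le Floch \& Pelayo via the sign change $L\mapsto -L$ from Proposition~\ref{proptrans}, and for the Kepler case you take the limit $R_1\to R_2$ and observe that the reflection symmetry forces each polygon and its mirror image to lie in the same invariant. The paper's proof is slightly terser (it does not dwell on the interaction of the reflection with the $\Z_2\times\mcG$-action) and mentions the Duistermaat--Heckman computation only as an alternative rather than as a sanity check, but the substance is the same.
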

\begin{proof}
Proposition \ref{proptrans} establishes an isomorphism of semitoric systems between the standard case $R_1<R_2$ and the reverse case $R_1 > R_2$ of the coupled angular momenta by changing the sign of $L$. As a consequence, we can obtain the polygon invariant of the reverse case simply by reflecting the polygons with respect to the vertical axis. An alternative way to obtain this result is to compute directly the polygons from the Duistermaat-Heckman function given in Theorem 5.3 of \vungoc\!\! \cite{Vu2}, as Le Floch \& Pelayo do in \cite{LFP}. The isotropy weights coincide, but the ordering of the critical points of $L$ gets exchanged, obtaining again the desired result.

For the Kepler problem case, we can either take the polygon invariant for the standard case or for the reverse case and take the limit $R_1 = R_2$. Note that since the result in both ways must coincide, the reflection with respect to the vertical axis of each polygon in the invariant must also be in the invariant.
\end{proof}

\begin{figure}[ht]
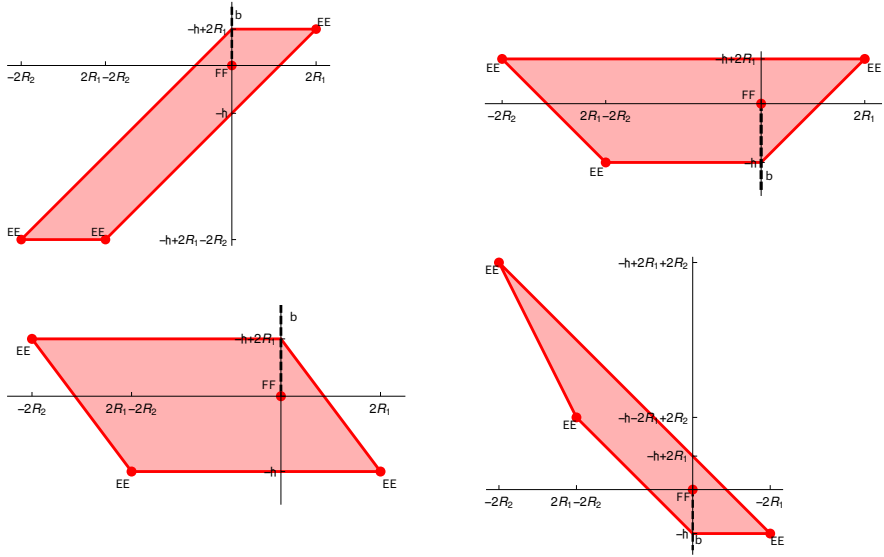
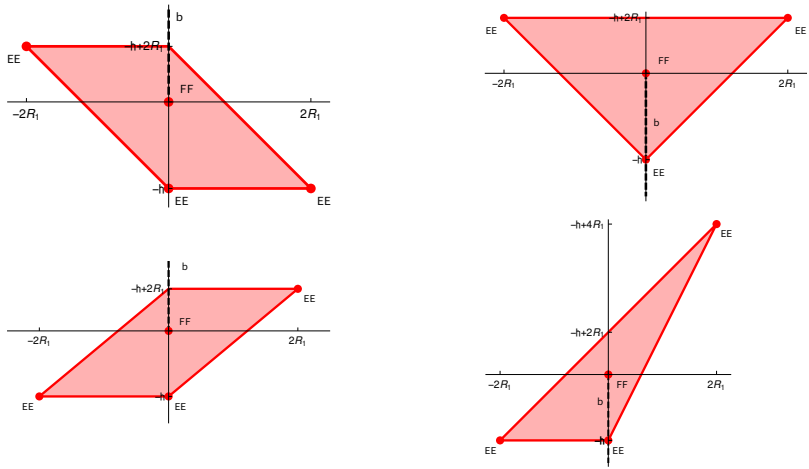

\begin{tabular}{m{6cm}m{6cm}}
\includegraphics[width=4.3cm]{polr-1+1.pdf} &
\includegraphics[width=5.3cm]{polr-1-1.pdf} \\
\includegraphics[width=5.3cm]{polr0+1.pdf} &
\includegraphics[width=4cm]{polr0-1.pdf}
\end{tabular}
\caption{Some elements of the polygon invariant for $R_1>R_2$, where $\mfh$ denotes the height invariant.}
\label{figpolrev}
\end{figure}

\begin{figure}[ht]
\begin{tabular}{m{6cm}m{6cm}}
\includegraphics[width=4.3cm]{polk-1+1.pdf} &
\includegraphics[width=4.3cm]{polk-1-1.pdf} \\
\includegraphics[width=4.3cm]{polk0+1.pdf} &
\includegraphics[width=3.3cm]{polk0-1.pdf}
\end{tabular}
\caption{Some elements of the polygon invariant for $R_1=R_2$, where $\mfh$ denotes the height invariant.}
\label{figpolkep}
\end{figure}

\section{The height invariant of the coupled angular momenta}

In this section we compute the height invariant of the coupled angular momenta \eqref{sys1}. We recall (cf.\ \S \ref{sec:height}) that the height invariant is trivial if there is no singularity of focus-focus type, i.e.\ $\nff=0$. Hence, as usually, we assume that $t^- < t < t^+$, so that $m=N \times S \in M$ is a non-degenerate focus-focus  singularity. For the specific case $t=\frac{1}{2}$, the height invariant has been already computed by Le Floch \& Pelayo in \cite{LFP}.

Consider the submanifold $Y := L^{-1} (L(m)) =  \{ p \in M \,|\, L(p) = L(m)=0 \}$, depicted in Figure \ref{hami}. The height invariant $\mfh$ is the symplectic volume of 
$$Y^- := Y \cup \{ p \in M \,|\, H(p)<H(m)=0 \} = \{ p \in M | L(p)=0, H(p)<0 \},$$
which is the area of the region outside of the blue curve $\mcH_0(q_2,p_2)=0$. Looking at the definition \eqref{int1} of the action $\mcI(l,h)$ of the reduced system, which corresponds to the area of $Y^+ := Y \cup \{ p \in M \,|\, H(p)>H(m)=0 \} = \{ p \in M | L(p)=0, H(p)>0 \}$ with a negative sign, the height invariant is given by the complementary area with a positive sign, i.e.\
\begin{equation}
\mfh = 2\min\{R_1,R_2\}  + I(0,0) %= 2\min\{R_1,R_2\} + R_1 \mcI(0,0) 
= 2\min\{R_1,R_2\} + \dfrac{R_1}{2\pi} \oint_{\beta_{0,0}} q_2 \, \dee p_2,
\label{hei1}
\end{equation} where $2\min\{R_1,R_2\}$ corresponds to the symplectic area of the reduced phase space for $l=0$. In this section, $\arctan$ will always denote the determination that takes values in $[0,\pi]$.

\begin{theo}
\label{theoheight}
The height invariant of the coupled angular momenta is
\begin{align*}
\mfh=2\min\{R_1,R_2\} + \dfrac{R_1}{\pi t} &\left( 
\ra - 
2R\,t \arctan \left( \dfrac{\ra}{R-t} \right)-
2\,t \arctan \left( \dfrac{\ra}{R+t-2R\,t} \right)
\right) 
\end{align*} where $\ra$ is the discriminant square root from Definition \ref{sqr}. 
\end{theo}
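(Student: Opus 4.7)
The plan is to compute $I(0,0) = R_1\,\mcI(0,0)$ via Lemma~\ref{actint} and plug into the identity $\mfh = 2\min(R_1,R_2) + I(0,0)$ from \eqref{hei1}. Since \eqref{i2l} gives $\mcI(l,h) = \mfI(l,h)$ along the line $l=0,\ h>0$, continuity of $\mcI$ at the focus-focus critical value yields $\mcI(0,0) = \mfI(0,0)$, so the task reduces to evaluating the complete elliptic integral $\mfI(0,0) = \frac{1}{2\pi}\oint_\beta R_\mcI(p_2)\,dp_2/\sqrt{P(p_2)}$ at $(l,h)=(0,0)$, where the elliptic curve $s^2 = P(p_2)$ acquires a node and the integral collapses to elementary functions.

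At the critical value, two of the three roots of $P$ coalesce at the origin, and a short direct computation gives
\[
P(p_2)\big|_{(0,0)} = \frac{2t(1-t)}{R}\,p_2^2\,(\zeta_3 - p_2), \qquad \zeta_3 := \frac{r_A^2}{2tR(1-t)},
\]
the key intermediate identity being $c^2 - 4Rt^2 = -r_A^2$ with $c := t + 2Rt - R$, which is a mere rearrangement of the definition of $r_A^2$. Specialising $R_\mcI$ from \eqref{RI} using $A(2R)|_{l=0} = 2(t-R)$ and $A(l+2)|_{l=0} = -2(R+t-2Rt)/R$, and checking $R_\mcI(0) = 0$, allows me to factor
\[
R_\mcI(p_2) = p_2\,\widetilde R(p_2), \qquad \widetilde R(p_2) := (1-t) + \frac{R-t}{p_2 - 2R} + \frac{R + t - 2Rt}{R\,(p_2 - 2)}.
\]
The factor $p_2$ cancels against the corresponding factor in $\sqrt{P(p_2)} = p_2\sqrt{(2t(1-t)/R)(\zeta_3 - p_2)}$, and the limit of the cycle integral reduces to $2\sqrt{R/(2t(1-t))}\int_0^{\zeta_3}\widetilde R(p_2)\,dp_2/\sqrt{\zeta_3 - p_2}$.

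Each of the three resulting elementary integrals is computed by the substitution $u = \sqrt{\zeta_3 - p_2}$. The constant piece of $\widetilde R$ contributes $2(1-t)\sqrt{\zeta_3}$, while for $a \in \{2R,\,2\}$ the rational pieces $\int_0^{\zeta_3}dp_2/((p_2-a)\sqrt{\zeta_3 - p_2})$ each evaluate to $-\frac{2}{\sqrt{a-\zeta_3}}\arctan\sqrt{\zeta_3/(a-\zeta_3)}$. The heart of the proof is then the two perfect-square identities
\[
2R - \zeta_3 = \frac{(R-t)^2}{2tR(1-t)}, \qquad 2 - \zeta_3 = \frac{(R+t-2Rt)^2}{2tR(1-t)},
\]
which follow by direct manipulation from the definitions of $\zeta_3$ and $r_A^2$; they convert the arctan arguments to the clean forms $r_A/(R-t)$ and $r_A/(R+t-2Rt)$. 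Collecting the three contributions, multiplying by the common prefactor $R_1/\pi$, and adding $2\min(R_1,R_2)$ yields exactly the stated formula. The main obstacle is simply spotting these two perfect-square identities; once they are recognised the argument is routine algebra, and the extension from the standard to the reverse case follows from the discrete symmetry of Proposition~\ref{proptrans} (the height invariant is insensitive to the sign flip of $L$).
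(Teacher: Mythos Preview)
Your proof is correct, and it takes a somewhat different route from the paper's. The paper works directly with the $\arccos$ form of the action integral, producing an explicit antiderivative (evidently found with computer algebra) and evaluating it at the endpoints $p_2=0$ and $p_2=\zeta_3$. You instead invoke Lemma~\ref{actint} to pass to the Abelian integral $\mfI$, then exploit the node of the degenerate elliptic curve at $(l,h)=(0,0)$: the factorisation $R_\mcI(p_2)=p_2\,\widetilde R(p_2)$ cancels the double root in $\sqrt{P(p_2)}$, and the remaining integral splits into three elementary pieces handled by the substitution $u=\sqrt{\zeta_3-p_2}$. Your approach is more transparent---no antiderivative appears from nowhere---and it cleanly isolates the two perfect-square identities $2R-\zeta_3=(R-t)^2/(2Rt(1-t))$ and $2-\zeta_3=(R+t-2Rt)^2/(2Rt(1-t))$ as the algebraic core; the paper's approach is quicker if one is willing to verify a computer-generated primitive.

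One small point worth tightening: for $t$ close to $t^+$ in the standard case one has $R+t-2Rt<0$, so $\sqrt{2-\zeta_3}=|R+t-2Rt|/\sqrt{2Rt(1-t)}$ and the signs in your third contribution flip. The final formula is still correct precisely because the paper adopts the convention $\arctan\in[0,\pi]$ (stated just before the theorem); you are implicitly using this to identify $\arctan\sqrt{\zeta_3/(2-\zeta_3)}$ with $\arctan\bigl(r_A/(R+t-2Rt)\bigr)$, and it would be good to say so explicitly.
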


\begin{proof}
We want to compute the value of \eqref{hei1},
\begin{align*}
\mfh &= 2\min\{R_1,R_2\}+ \dfrac{R_1}{2\pi} \oint_{\beta_{0,0}} q_2 \, \dee p_2= 2\min\{R_1,R_2\} + \dfrac{R_1}{\pi} \int_{\ze_2}^{\ze_3} q_2 \, \dee p_2 \\
&= 2\min\{R_1,R_2\} + \dfrac{R_1}{\pi} \int_{\ze_2}^{\ze_3} \arccos \left(  \dfrac{-A(p_2)}{\sqrt{B(p_2)}}\right) \dee p_2.
\end{align*}  For $(l,h)=(0,0)$, the polynomials $A(p_2)$ and $B(p_2)$ defined in \eqref{ABdef} take the form
\begin{align*}
A(p_2) &= \dfrac{1}{R} \left(-{p_2}^2 t + p_2 (-R + t + 2 R t) \right)\\
B(p_2) &= \dfrac{1}{R^2} (-2 + p_2) {p_2}^2 (p_2 - 2 R) t^2 
\end{align*}
and the polynomial $P(p_2)$ defined in \eqref{Pdef} has the form 
\begin{align*}
P(p_2)&=\dfrac{(-2 + p_2) {p_2}^2 (p_2 - 2 R) t^2}{R^2} - \dfrac{(-{p_2}^2 t + 
   p_2 (-R + t + 2 R t))^2}{R^2} \\
& = \dfrac{{p_2}^2}{R^2} \left( -R^2 (1 - 2 t)^2 + 2 R (1 + p_2 (-1 + t)) t - t^2) \right), 
\end{align*} which has the roots $$ \ze_1 = \ze_2 =0,\qquad \ze_3 = \dfrac{R^2 (1 - 2 t)^2 - 2 R t + t^2}{2 R (-1 + t) t}. $$ We can now compute the integral
\begin{small}
\begin{align*}
\mfh =& 2\min\{R_1,R_2\} - \dfrac{R_1}{\pi} \int_{\ze_2}^{\ze_3} \arccos \left( \dfrac{R + (-1 + p_2) t - 2 R t}{t \sqrt{(-2 + p_2) (p_2 - 2 R)}} \right) \dee p_2 \\
=& 2\min\{R_1,R_2\} -\dfrac{R_1}{\pi t} \left[ \sqrt{-R^2 (1 - 2 t)^2 + 2 R (1 + p_2 (-1 + t)) t - t^2}\right. \\
 & \left.+ p_2 t \arccos \left(\dfrac{R + (-1 + p_2) t - 2 R t}{t\sqrt{(-2 + p_2) (p_2 - 2 R)}} \right) \right. \\
&- \left. 2Rt \arctan \left( \dfrac{\sqrt{-R^2 (1 - 2 t)^2 + 2 R (1 + p_2 (-1 + t)) t - t^2}}{R - t}\right)  \right. \\
\\& \left. - 2t \arctan \left( \dfrac{\sqrt{-R^2 (1 - 2 t)^2 + 2 R (1 + p_2 (-1 + t)) t - t^2}}{R+t-2Rt}\right)    \right]_{\ze_2}^{\ze_3} \\
=& 2\min\{R_1,R_2\} + \dfrac{R_1}{\pi t} \left( 
\sqrt{-R^2 (1 - 2 t)^2 + 2 R\, t - t^2} \right. \\&\left.- 
2R\,t  \arctan \left( \dfrac{\sqrt{-R^2 (1 - 2 t)^2 + 2 R\, t - t^2}}{R-t} \right)-
2t \arctan \left( \dfrac{\sqrt{-R^2 (1 - 2 t)^2 + 2 R\, t - t^2}}{R+t-2Rt} \right)
\right),
\end{align*}
\end{small}
 which is what we wanted to show.
\end{proof}

The height invariant $\mfh$ can also be expressed in terms of the parameters $(u,v)$ defined in \eqref{uv}, where the symmetry between the usual coupled angular momenta $R_1<R_2$ and the reverse case $R_1>R_2$ becomes apparent,
\begin{align*}
 \dfrac{\mfh}{\kappa} =& 2 e^{-|u|} + \dfrac{2}{\pi\cosh v} - \dfrac{2 e^u}{\pi} \arctan \left( \frac{1}{\sinh v + e^u \cosh v}\right) \\&- \dfrac{2 e^{-u}}{\pi} \arctan \left( \frac{1}{\sinh v + e^{-u} \cosh v}\right)  ,
\end{align*}
since it is invariant under the change $u \mapsto -u$. In Figure \ref{figheight} we can see how the height invariant changes with respect to $u$ and $v$. In the limit $v \to +\infty$, which corresponds to $t \to t^+$, the height invariant approaches the total volume of the reduced phase space, $2\kappa \min(e^u,e^{-u}) = 2\min\{R_1,R_2\}$. This means that the focus-focus singularity approaches the top part of the polygons in Figure \ref{twistbig}. In the limit $v \to -\infty$, which corresponds to $t \to t^+$, the height invariant approaches the value 0, which corresponds to the bottom of the polygons. 

In the Kepler case $u=0$ the heigh invariant simplifies to 
\[
  \mfh = 2 - \frac{2}{\pi} ( 2 \arctan e^v  -  \sech v).
\]

\begin{figure}[ht]
 \centering
 \includegraphics[width=7cm]{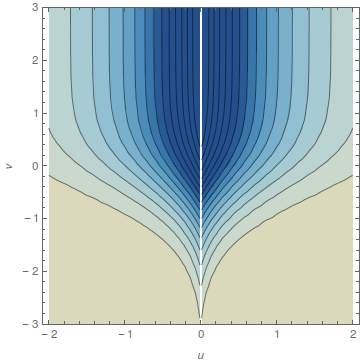}
 \caption{\small Contour plot of the height invariant as a function of $u$ and $v$ and setting $\kappa=1$. The intensity of the blue means higher positive values.}
 \label{figheight}
\end{figure}

\section{The twisting-index invariant of the coupled angular momenta}

In this section we compute the twisting-index invariant of the coupled angular momenta \eqref{sys1}. We recall that the twisting index is trivial if $\nff =0$ (cf.\ \S \ref{sec:twistingIndex}), thus we will make our usual assumption that we are in the situation $t^- < t < t^+$ for which $\nff=1$ (cf.\ \S \ref{sec:CAM}), i.e.\ the singularity in $m \in M$ is non-degenerate and of focus-focus type.

The twisting-index invariant is determined by fixing a weighted polygon $\De_{weight}=(\De,b_{\lam},\epsilon)$ of the polygon invariant and calculating the twisting index $k \in \Z$ corresponding to the focus-focus critical point. The twisting-index invariant is then the $\Z_2 \times \mcG$-orbit of $\De_{weight}=(\De,b_{\lam},\epsilon,k) \in \mcW$Polyg$(\R^2)\times\Z$, where $\mcG \simeq \Z$ and the $\Z_2 \times \mcG$-action is defined in \S \ref{sec:twistingIndex}. In the case of the coupled angular momenta, we obtain the following result. 

\begin{theo}
\label{theotwis}
The twisting-index invariant of the standard coupled angular momenta is the orbit of $(\De,b_{\lam},\epsilon,k)$ by $\Z_2 \times \mcG \simeq \Z_2 \times \Z$, where $\De$ is the rational polygon in Figure \ref{twisfig}, $b_{\lam} = \{(0,y)\;|\;y \in \R\}$, $\epsilon =+1$ and $k=0$. It is independent of the parameters $t, R_1$ and $R_2$.
\end{theo}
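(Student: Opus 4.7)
The plan is to fix the specific cartographic representative $(\Delta, b_\lambda, \epsilon)$ of Figure \ref{twisfig}, with $b_\lambda = \{L=0\}$ and $\epsilon = +1$, and to verify directly that the associated twisting index from \eqref{twistwis} equals $0$. By definition, this amounts to comparing the generalised toric momentum map $\mu = f \circ F$ obtained from the cartographic homeomorphism $f$ attached to $\Delta$, with the privileged momentum map $\nu = (L, I)$ around the focus-focus point $m$. The relation $\mu = T^k \circ \nu$ on a neighbourhood $W$ of the focus-focus fibre becomes $\mu^{(2)} - I = kL + \mathrm{const}$, so the integer $k$ can be read off from an affine comparison of these two smooth functions on $W \setminus b_\lambda^\epsilon$.

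First I would make $I$ explicit using the Taylor series invariant already computed in Theorem \ref{theoinv}. Since $I = \mathfrak{p} \circ \Phi$ with $2\pi\mathfrak{p}(w) = S(w) - \mathrm{Im}(w\ln w - w) + \mathrm{const}$, and $\Phi = (L, J)$ where $J$ is the imaginary action \eqref{imag}, the Birkhoff normal form $\mathcal{B}(l,j)$ from Lemma \ref{BNF} together with $S$ determines $I$ to all orders near $m$. In particular $\partial I/\partial L$ at $m$ is governed by $\sigma_1(0) = \partial S/\partial l\,|_{(0,0)}$, which is the arctan term computed in Theorem \ref{theoinv}.

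Next I would describe $\mu^{(2)}$ on $W$ by integrating the rotation number $\mathcal{W}(l,h)$ from Lemma \ref{lemTW1} along a path from a chosen reference point inside the polygon. The cartographic homeomorphism is uniquely fixed by the pair $(b_\lambda, \epsilon)$ up to the vertical-translation subgroup of $\mathcal{V}$, so $\mu^{(2)}$ is determined up to an additive constant. Because both $\mu^{(2)}$ and $I$ have the same logarithmic singularity along $b_\lambda^{+}$ (this is how the polygon's vertical cut was set up), the difference $\mu^{(2)} - I$ extends smoothly across the cut as an affine function of $L$ alone; its slope is the integer $k$.

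The expected main obstacle is matching the subleading affine pieces cleanly: both maps diverge logarithmically near $c = (0,0)$ and the extraction of $k$ requires tracking the next-order terms carefully, in particular the precise choice of branch of $\arg(w)$ used in the definition of $\mathcal{A}_{\mathrm{sing}}$. To bypass any delicate case analysis in $(t, R_1, R_2)$, I would exploit the fact that $k$ takes integer values while everything entering the comparison varies continuously on the connected parameter domain $\{R_1, R_2 > 0,\ t \in\;]t^-, t^+[\,\}$; consequently it suffices to verify $k=0$ at one convenient point, for instance the Kepler-symmetric case $R_1 = R_2$, $t = 1/2$, where the Taylor series of $S$ and the polygon $\Delta$ simplify substantially. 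Once $k=0$ is established for this representative, the twisting indices of every other polygon in the $(\mathbb{Z}_2 \times \mathcal{G})$-orbit are recovered directly from the group action $(\epsilon', T^{k'}) \star (\Delta, b_\lambda, \epsilon, 0) = (t_u(T^{k'}(\Delta)), b_\lambda, \epsilon'\epsilon, k')$ described in \S \ref{sec:twistingIndex}, which reproduces the labels shown in Figure \ref{figtwistab}.
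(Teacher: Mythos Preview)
Your overall strategy matches the paper's: compare the privileged momentum map $\nu=(L,I_m)$ with the generalised toric momentum map $\mu$ attached to the specific polygon, read off $k$, and then invoke continuity in the parameters to conclude $k$ is constant on the connected domain $\{R_1,R_2>0,\ t\in\,]t^-,t^+[\,\}$. The continuity argument you give is exactly the one the paper uses (the linear $l$-coefficient of $S$, hence the branch of $\arg$, varies continuously, so the integer $k$ cannot jump).

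Where the two routes differ is in how $I_m$ and $\mu^{(2)}$ are made explicit. You propose to rebuild $I_m$ from the Taylor series $S$ via $2\pi\mathfrak p(w)=S(w)-\mathrm{Im}(w\ln w-w)+\text{const}$, and separately to describe $\mu^{(2)}$ by integrating the rotation number $\mathcal W$. The paper shortcuts both steps with a single observation: comparing \eqref{inv5} and \eqref{AI} shows that $I(\tilde l,B(\tilde l,\tilde j))-I(0,0)$ already satisfies the defining properties of $\mathfrak p$, so by uniqueness $I_m=I\circ F-I_0$, i.e.\ the privileged second component \emph{is} the reduced action (shifted). Then $\nu=(L,I-I_0)$ can be plotted directly from the action integral of Lemma~\ref{actint}, and its image is seen to coincide with $\Delta$, giving $\mu=\nu$ and $k=0$ without isolating a special parameter point. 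Your approach is sound but more roundabout; the paper's identification of $I_m$ with the action itself is the cleaner lever, and it also clarifies why $\mu^{(2)}$ and $I_m$ agree globally rather than merely to some finite order in the Taylor series.
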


\begin{figure}[ht]
 \centering
 \includegraphics[width=9cm]{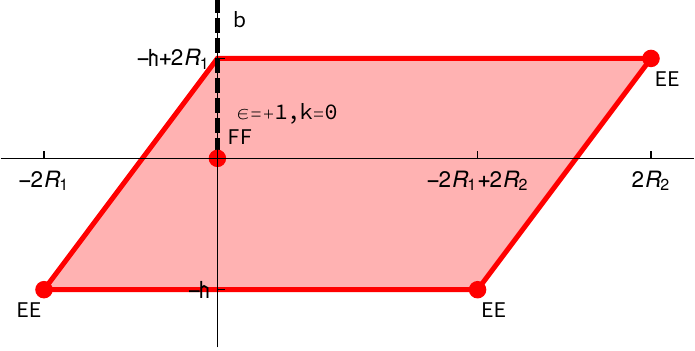}
 \caption{\small Weighted polygon $\De_{weight}$ from the polygon invariant, corresponding of the polygon $\De$, the line $b_{\lam}=b_0$ and the sign choice $\epsilon =+1$. This polygon has the twisting index $k=0$ associated. The image of the focus-focus singularity is $(0,0)$, while the three elliptic-elliptic singularities are in $(-2R_1,-\mfh)$, $(-2R_1+2R_2,-\mfh)$ and $(2R_2,-\mfh+2R_1)$, where $\mfh$ denotes the height invariant.}
 \label{twisfig}
\end{figure}

\begin{proof}
Fix $\epsilon=+1$. The critical value corresponding to the focus-focus singularity is $c = F(m) = (L,H)(m) = (0,0)$, thus $\lambda=0$ and $b_{\lam} = \{ (0,y) \;|\; y \in \R\}$. With our choice of $\epsilon$ we also have $b_{\lam}^{\epsilon} = \{(0,y) \;|\; y>0\}$. Let $U \subseteq M$ be a neighbourhood of $m$. We have seen in \S \ref{sec:twistingIndex} that there is a unique smooth map $I_m: F^{-1}(F(U)\backslash b_{\lam}^{\epsilon}) \to \R$ with \eqref{Xp} as Hamiltonian vector field and $\lim_{x \to m} I_m(x)=0$. We also know that the map can be made continuous in $W=F^{-1}(F(U))$ and it is of the form $I_m = \mfp \circ \Phi \circ F$, where $\mfp$ has as partial derivatives $\partial_i \mfp = \frac{\tau_i}{2\pi}$, with $\tau_i$ as in \eqref{taus}, $i=1,2$ and thus is of the form
\begin{align}
\label{Simw}
2\pi \mfp (\tilde{w}) &= S(\tilde{w})-\imp(\tilde{w} \ln \tilde{w}-\tilde{w})+\text{Const} \\&= S(\tilde{w}) - \tilde{w} \ln |\tilde{w}| - \tilde{w} \arg(\tilde{w}) +\tilde{w} +\text{Const}. \nonumber
\end{align}

If we compare this with \eqref{inv5} and \eqref{AI}, and use unscaled quantities, we see that $I(\tilde{l},B(\tilde{l},\tilde{j}))-I(0,0)$ satisfies these properties.  Therefore, by uniqueness, it must be the map $\mfp$. As a consequence, if we write $I_0 = I(0,0)$, we must have $I_m = \mfp \circ \Phi = I \circ F - I_0$.

The last step is to compare the privileged momentum map, which is $\nu = (L,I_m)$, with the polygons in the polygon invariant. Even though we only know $S(\tilde{w})$, and therefore $I_m(x)$, up to a finite order, it is enough to depict the image of $\nu$, as we can see in Figure \ref{twisfigmat}. We could compute the exact map using the Abelian integral \eqref{i2l} derived in the beginning, but using the (truncated) Taylor series invariant serves also as a numerical test of the correctness of the series.

Since it coincides with one of the polygons in the polygon invariant, namely the polygon $\De$ in Figure \ref{twisfig}, we conclude that this is the polygon for which $k=0$, since $\mu = \nu$, where $\mu:M \to \De$ is the corresponding momentum map to $\De$. Moreover, if we look at the linear term in $l$ of the Taylor series invariant in Theorem \ref{theoinv}, we see that it is continuous for all $t \in\, ]\ t^-,t^+[$, thus the determination of the $\arg$ function in \eqref{Simw} does not change. As a consequence, the twisting-index invariant is constant for all $t \in\, ]\ t^-,t^+[$.
\end{proof}

\begin{figure}[ht]
 \centering
 \includegraphics[width=9cm]{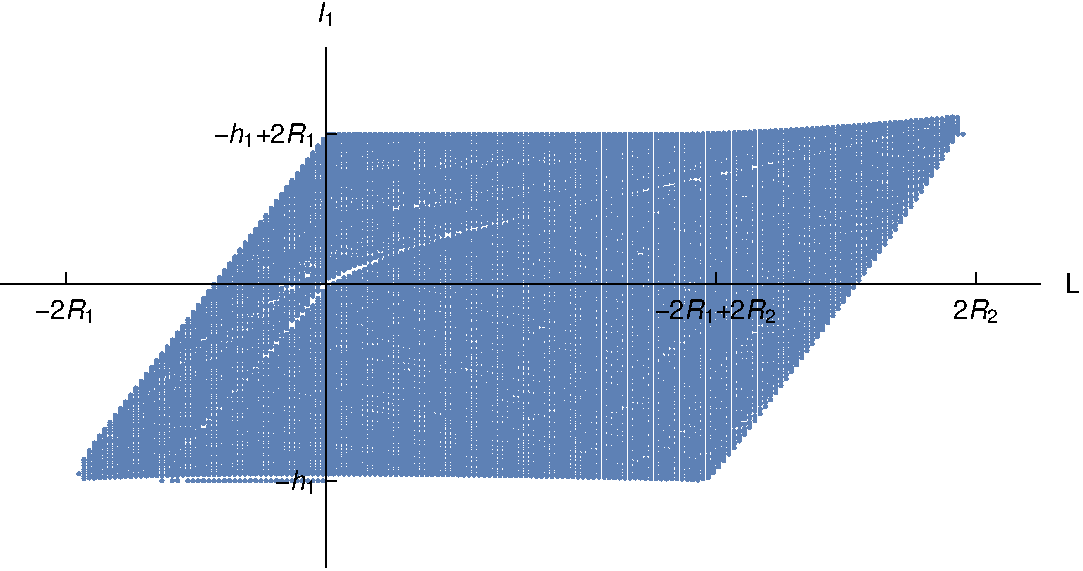}
 \caption{\small Computational depiction of the image of the privileged momentum map $\nu =(L,I_m)$ for $t=\frac{1}{2}$ using 265392 points and the Taylor series invariant up to order 2.}
 \label{twisfigmat}
\end{figure}

A last remark about Figure \ref{twisfig}: if we compare it with the one depicted by Le Floch and Pelayo in \cite{LFP}, we observe a horizontal translation by $R_1-R_2$ and a vertical translation by $\mfh$. The first one is due to our definition of the function $L$ in \eqref{sysin}, which differs by $R_1-R_2$ from theirs because we wanted to impose $(L,M)(m)=(0,0)$. The second corresponds to the requirement that $I_m(x)$ tends to 0 as $x \to m$. 

The $\Z_2 \times \mcG$-action on $(\De,b_{\lam},\epsilon,k)=(\De,b_0,+1,0)$ completely determines the twisting-index invariant and in particular allows us to associate the corresponding twisting index to all polygons of the polygon invariant. In Figure \ref{twistbig} some of them are depicted.

\begin{figure}[ht!]
\subfloat[$\epsilon=+1$, $k=-1$]{
 \includegraphics[width=0.4\textwidth]{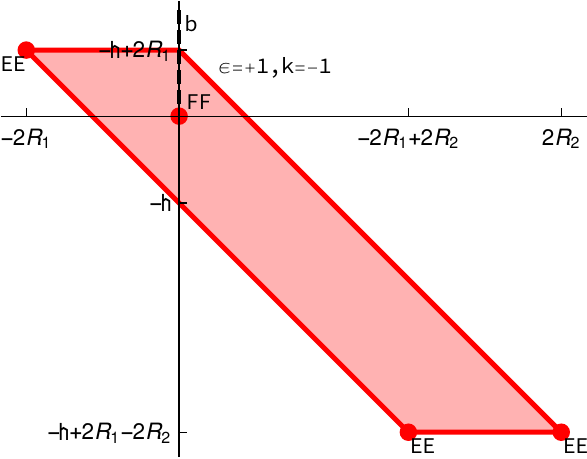}
}
\subfloat[$\epsilon=-1$, $k=-1$]{
 \includegraphics[width=0.45\textwidth]{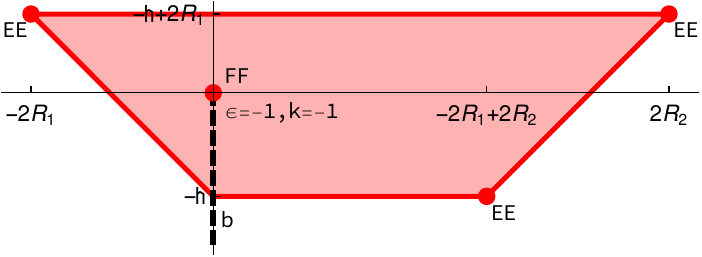}
}
\end{figure}

\begin{figure}[ht!]
\subfloat[$\epsilon=+1$, $k=0$]{
 \includegraphics[width=0.5\textwidth]{pol0+1.pdf}
}
\subfloat[$\epsilon=-1$, $k=0$]{
 \includegraphics[width=0.28\textwidth]{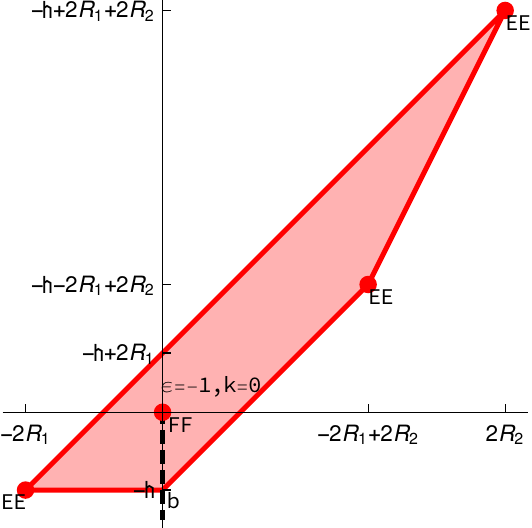}
}
\end{figure}

\setcounter{figure}{14}
\begin{figure}[ht!]
\subfloat[$\epsilon=+1$, $k=+1$]{
 \includegraphics[width=0.35\textwidth]{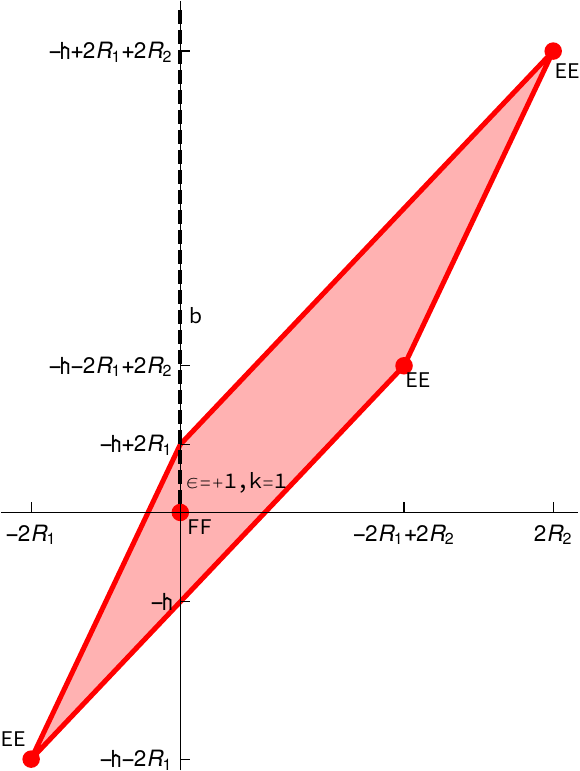}
}
\subfloat[$\epsilon=-1$, $k=+1$]{
 \includegraphics[width=0.3\textwidth]{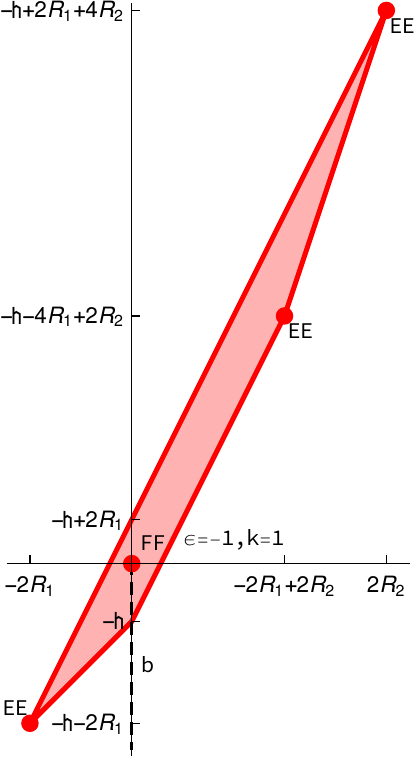}
}
 \caption{Representation of the twisting-index invariant of the standard coupled angular momenta. The polygon consists of an infinite collection of weighted polygons and the twisting-index invariant consists of the association of an index $k$ to each of the polygons. Polygons for $k=-1,0,1$ are depicted.}
 \label{twistbig}
\end{figure}
\newpage

\begin{co}
The twisting-index invariant of the coupled angular momenta for the reverse case $R_1>R_2$ and the Kepler problem $R_1=R_2$ is the association of indices to the elements of the polygon invariant represented in \S \ref{sec:intro} in Figure \ref{figtwistab}. 
\end{co}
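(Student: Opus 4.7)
The plan is to derive this corollary as a direct consequence of Theorem \ref{theotwis} and Proposition \ref{proptrans}, by carefully tracking how the twisting-index data transforms under the two operations at hand: the sign-reversal symmetry $L\mapsto -L$ (which relates the reverse case to the standard case) and the degeneration $R_1\to R_2$ (which handles the Kepler case).

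First, for the reverse case $R_1>R_2$, I would invoke Proposition \ref{proptrans}, which provides a semitoric isomorphism between the reverse coupled angular momenta and the standard coupled angular momenta with $L$ replaced by $-L$. As in the proof of Theorem \ref{theopol}, on the polygon side this isomorphism acts as a reflection about the vertical axis, producing the polygons already displayed in Figure \ref{figpolrev}. On the twisting-index side one must trace how the privileged momentum map $\nu=(L,I_m)$ used in the proof of Theorem \ref{theotwis} transforms. Under $L\mapsto -L$, the first component of $\nu$ flips sign while $I_m$ is unchanged (because $I_m$ is characterised intrinsically by \eqref{Xp} and the vanishing condition at $m$, which are both preserved by the symplectomorphism $\Xi$). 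Hence the privileged momentum map in the reverse system is $\nu' = (-L,I_m)$, and the corresponding polygon $\De'$ obtained by setting $k=0$ is precisely the reflection of $\De$ across the vertical axis. This identifies the polygon of the twisting-index invariant carrying $k=0$ in the reverse case. All other indices are then obtained by the free $\mcG\simeq\Z$-action via the transformations $T^{k'}$ defined in \eqref{TT}, yielding exactly the labelling in the third column of Figure \ref{figtwistab}.

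Next, for the Kepler case $R_1=R_2$, I would observe that by Proposition \ref{proptrans} the system enjoys the extra discrete symmetry $L\mapsto -L$ at this parameter value. In terms of the polygon invariant (Theorem \ref{theopol}, Figure \ref{figpolkep}) this manifests as the fact that reflection of each weighted polygon across the vertical axis lies again in the $(\Z_2\times\mcG)$-orbit. One then extends the twisting-index computation to this case by continuity in the parameters: because the linear coefficient of $l$ in the Taylor series of Theorem \ref{theoinv} depends continuously on $(R_1,R_2,t)$ throughout $t\in\,]t^-,t^+[$ (in particular, no determination of $\arg$ is crossed), the privileged momentum map and hence the assignment of $k$ to the polygons varies continuously. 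Combining the standard and reverse calculations at the boundary $R_1=R_2$ gives the self-consistent labelling recorded in the middle column of Figure \ref{figtwistab}.

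The main obstacle is the careful bookkeeping of the action of the reflection on $(\De,b_\lam,\epsilon,k)$. The group $\Z_2\times\mcG$ acts on the twisting index only through $\mcG$ (the $\Z_2$-factor leaves $k$ fixed), but the reflection induced by $L\mapsto -L$ is \emph{not} an element of this group; it is an external symmetry of the labelled collection. Thus one must verify that the reflected data $(\De',b_\lam,\epsilon,0)$ is genuinely the $k=0$ label in the reverse system rather than being equivalent via some $T^{k'}$ to one of the already-known polygons. This is accomplished by reproducing the privileged momentum map explicitly (as in Figure \ref{twisfigmat}) for a representative parameter choice and comparing with Figure \ref{figtwistab}; continuity and the freeness of the $\mcG$-action on $k$ then propagate the result to the entire parameter range.
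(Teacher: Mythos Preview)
Your proposal is correct in spirit, and for the Kepler case it essentially reproduces the paper's argument (continuity in the parameters together with the extra reflection symmetry at $R_1=R_2$). For the reverse case, however, you take a noticeably different route than the paper.

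The paper's proof is a pure continuity argument: since the twisting index is an integer and Theorem~\ref{theotwis} shows it is locally constant in $(R_1,R_2,t)$, the labelling of the polygons in the standard case propagates unchanged across $R_1=R_2$ to the reverse case. Each row of Figure~\ref{figtwistab} is then obtained by continuous deformation of a single labelled polygon, and the constancy of $k$ forces the association shown. This handles both the reverse and Kepler cases simultaneously in two lines.

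Your approach instead tracks the privileged momentum map $\nu=(L,I_m)$ through the explicit symplectomorphism $\Xi$ of Proposition~\ref{proptrans}. This is legitimate but more delicate than you indicate. The assertion that ``$I_m$ is unchanged'' under $L\mapsto -L$ is not automatic: $I_m$ depends on the choice of Eliasson isomorphism (equivalently, on the choice of $\Phi$), and the sign flip on $Q_1$ acts nontrivially on this data via the $K_4$-action of \eqref{septrans}, shifting the Taylor series by $\pi X$ and hence altering $\mfp$ in \eqref{strucp}. You implicitly acknowledge this when you retreat to ``reproducing the privileged momentum map explicitly for a representative parameter choice'' and then invoking continuity --- at which point you have essentially re-derived the paper's argument by a longer path. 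The symmetry approach can be made rigorous, but it requires tracking how the reflection interacts with the choice of branch of $\arg$ and the normalisation $\lim_{x\to m}I_m(x)=0$; the paper's continuity argument sidesteps all of this.

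In short: your proof works, but the detour through the reflection symmetry for the reverse case buys nothing over the direct continuity argument and introduces a bookkeeping step you do not fully carry out.
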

\begin{proof}
We need to assign twisting indices to the polygons of the polygon invariant calculated in \S \ref{sec:polpol} and represented in Figures \ref{figpolrev} and \ref{figpolkep}. We know that the polygons on the same row must have the same index. Moreover, we have seen that the twisting indices do not vary with the parameters $R_1,R_2$, so in particular they must stay constant when approaching the Kepler case from either the standard or the reverse case. This leaves as only possibility the association represented in Figure \ref{figtwistab}.
\end{proof}

This completes the symplectic classification of the coupled angular momenta.

\newpage
\appendix

\section{Elliptic integrals}
\label{sec:ellipticInt}
In this section we briefly enumerate some properties of elliptic integrals that are used in the present paper. A more detailed overview can be found for example in Siegel \cite{Si1}, \cite{Si2} and Bliss \cite{Bl}.

An \emph{elliptic integral} is an integral of the form 
\begin{equation}
\mathcal{N}(x) := \int_c^x R(z,\sqrt{P(z)})\, d z,
\label{ellip}
\end{equation} where $z$ is a variable that may be real or complex, $c$ is a constant, $R(z,w)$ is a rational function and $P(z)$ is a polynomial of degree 3 or 4. In general, elliptic integrals cannot be expressed in terms of elementary functions, but it is possible to reduce them in terms of rational functions and the three \emph{Legendre canonical forms}:
\begin{align*}
& F(x; k) := \int_0^x \dfrac{dt}{\sqrt{(1-t^2)(1-k^2t^2)}}, \\
& E(x; k) := \int_0^x \dfrac{\sqrt{1-k^2t^2}}{\sqrt{1-t^2}}dt, \\
& \Pi(n; x; k) := \int_0^x \dfrac{dt}{(1-nt^2) \sqrt{(1-t^2)(1-k^2t^2)}}.
\end{align*} 

$F(x;k)$, $E(x;k)$ and $\Pi(n; x; k)$ are usually called \emph{incomplete elliptic integrals of first, second} and \emph{third kind} respectively. Their \emph{complete} counterparts correspond to the case $x=1$, i.e.\ $K(k):=F(1;k)$, $E(k):=E(1,k)$ and $\Pi(n,k):=\Pi(n;1;k)$ respectively. The parameter $k$ receives the names of \emph{(elliptic) modulus} or \emph{excentricity}, $n$ is called the \emph{characteristic} and $x$ is the \emph{argument}.

The function $K(k)$ can be expanded as
\begin{equation}
\begin{aligned}
K(k) &= \dfrac{1}{4}(k^2-1)- \dfrac{21}{128} (k^2-1)^2 + \dfrac{185}{1536} (k^2-1)^3 + ... \label{expK1} \\
 &+ \left(\ln \dfrac{1-k^2}{16} \right)\left( -\dfrac{1}{2} +\dfrac{1}{8}(k^2-1) - \dfrac{9}{128} (k^2-1)^2 - \dfrac{25}{512}(k^2-1)^3 + ... \right). 
\end{aligned}
\end{equation} The behaviour of $\Pi(n,k)$ depends on the parameters. In this paper we are always in the so-called \emph{circular case}, i.e. when either $k^2 < n < 1$ or $n<0$, cf.\ Cayley \cite{Ca}. In this regime, $\Pi(n,k)$ can be rewritten in terms of the Heuman's lambda function $\Lam_0$, defined by
\begin{equation}
\Lambda_0(\vt,k) := \dfrac{2}{\pi} \left( E(k) F(\vt,k') + K(k)E(\vt,k')-K(k)F(\vt,k') \right).
\label{lamlam}
\end{equation} In particular, for the positive circular case ($k^2 < n < 1$) we have 
\begin{equation*}
\Pi(n,k) = K(k) + \dfrac{\pi}{2} \sqrt{\dfrac{n}{(1-n)(n-k)}} \left( 1-\Lambda_0(\vt,k) \right),\quad \vt = \arcsin \sqrt{\dfrac{1-n}{n-k}} 
\end{equation*} and for the negative circular case ($n<0$) we have
\begin{equation*}
\Pi(n,k) = \dfrac{1}{1-n}K(k) + \dfrac{\pi}{2} \sqrt{\dfrac{n}{(1-n)(n-k)}} \left( 1-\Lambda_0(\vt,k) \right),\quad \vt = \arcsin \dfrac{1}{\sqrt{-n}}.
\end{equation*} Heuman's lambda function can be expanded as
\begin{equation}
\begin{aligned}
 \Lambda_0(\vt,k) &= \dfrac{2}{\pi} \vt + \sin \vt \cos \vt\left( \dfrac{1}{2\pi}(k^2-1) - \left( \dfrac{13}{32\pi} + \dfrac{3\sin^2 \vt}{16\pi} \right)(k^2-1)^2 + ...\right) \label{expL1} \\
 &+ \sin \vt \cos \vt \left(\ln \dfrac{1-k^2}{16} \right)\left( \dfrac{1}{2\pi}(k^2-1) - \left( \dfrac{3}{16\pi} + \dfrac{\sin^2 \vt}{8\pi} \right)(k^2-1)^2 + ...\right) . 
\end{aligned}
\end{equation}
While the variable $z$ from \eqref{ellip} is real in most applications, it is sometimes convenient to let it take complex values, too. The elliptic integral on which the integration takes place, 
\begin{equation*}
\Gamma := \{ (z,s) \in \mathbb{C}^2 \; :\; s^2 = P(z) \},
\end{equation*} will be a Riemann surface, i.e.\ a one-dimensional complex manifold. Using complex function theory we know that $\Ga$ will be a compact surface of genus $\frac{N-2}{2}$ if $n$ is even or $\frac{N-1}{2}$ if $n$ is odd, where $N$ is the degree of $P$. For the elliptic case, i.e.\ $\deg P=3$ or $4$, $\Ga$ will be homeomorphic to a torus $\T^2 =\mbS^1 \times \mbS^1$. Its first homology group will then be generated by two independent non-contractible cycles $\alpha$ and $\beta$ (see Figure \ref{cycles}). When considering elliptic integrals on a regular fibre close to the singular focus-focus fibre, we will refer to the cycle $\alpha$ as the \emph{`imaginary'} or \emph{`vanishing'} cycle, since it vanishes as we approach the singular fibre, and to the cycle $\beta$ as the \emph{`real'} cycle, since it coincides with the real elliptic curve when $z$ is considered as a real variable. 

\begin{figure}[ht!]
 \centering
 \includegraphics[width=6cm]{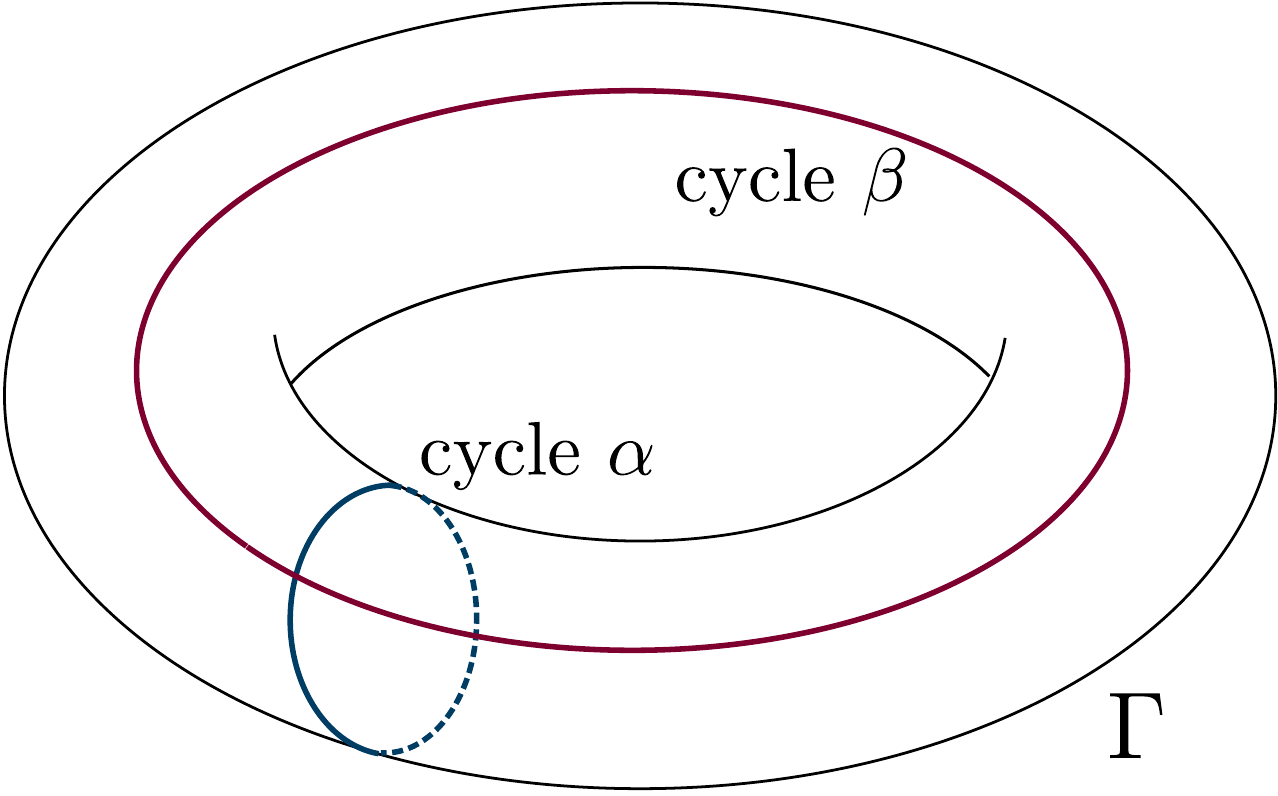}
 \caption{\small The elliptic curve $\Gamma$ with the imaginary cycle $\alpha$ (blue colour) and the real cycle $\beta$ (red colour).}
 \label{cycles}
\end{figure}

\newpage

\newpage
\section{Polynomial coefficients}
\label{apcoef}
\raggedright

\subsection{Coefficients of the Birkhoff normal form in \eqref{polB}}
\begin{align*}
%a_{10} =& R + t - 2 R t \\
%a_{01} =& \ra \\
a_{20} =& - R^2 (2 - 7 t + 6 t^2) -R (-3 + t) t  - t^2 \\
a_{11} =& 2 (-1 + R) \ra t \\
a_{02} =&- R^2 (2 - 5 t + 2 t^2)  - R t (-1 + 3 t) +t^2 \\
a_{30}=& -(-1 + R) t ( R^3 (-1 + 2 t)^3 + 
 R^2 t (3 - 8 t + 4 t^2)+ R t^2 (-3 + 2 t)+t^3)\\
a_{21}=& -R \ra (-1 + t) (R^2 (1 - 2 t)^2  + R t (-2 + 9 t)+ t^2) \\
a_{12}=& -3 (-1 + R) t ( R^3 (-1 + 2 t)^3 + 
 R^2 t (3 - 8 t + 4 t^2)+ R t^2 (-3 + 2 t)+t^3  )\\
a_{03}=&\ra (
 R^3 (1 - 3 t + 4 t^3)+R^2 (7 - 9 t) t^2 + R (-3 + t) t^2 + 2 t^3)\\ 
\end{align*}

\subsection{Coefficients of equation \eqref{polRI} in the proof of Lemma \ref{BNF}}
\begin{align*}
b_{00}=&-2 p_2 (R^2 (1 - 2 t)  + R (1 + p_2 (-1 + t) + t)- t)\\
b_{10}=&2 (-t + R (-1 + 2 t)) (2 (-1 + p_2) R^2 (1 - 2 t)^2 + 2 (-1 + p_2) t^2 \\&+
 R t (4 - 5 {p_2}^2 (-1 + t) + 2 p_2 (-5 + 3 t)))\\
b_{01}=&2 (-2 + p_2)^2 R (R^2 (1 - 2 t)^2 + R (-2 - 3 p_2 (-1 + t)) t + t^2)\\
b_{20}=& 2 (-7 {p_2}^5 R^2 (-1 + t)^2 t^2 (-3 t + R (-2 + 4 t)) + 
 8 R (5 R t^4 - t^5 + R^5 (-1 + 2 t)^5 \\&- 
 R^4 t (-1 + 2 t)^3 (1 + 4 t)
 - 2 R^2 t^3 (4 - 5 t + 6 t^2) + 
 2 R^3 t^2 (2 - 9 t + 8 t^2 + 4 t^3)) \\&+ 
 {p_2}^4 R (-1 + t) t (R (80 - 37 t) t^2 - 15 t^3 
 + R^2 t (32 - 13 t - 60 t^2) \\&+ 
 R^3 (-13 + 106 t - 240 t^2 + 160 t^3)) + 
 2 {p_2}^2 (R (25 - 13 t) t^4 - 3 t^5 \\&+ 3 R^6 (-1 + 2 t)^5 + 
 3 R^5 (-1 + 2 t)^3 (1 - 15 t + 10 t^2) - 
 3 R^2 t^3 (16 - 31 t + 20 t^2) \\&+ 
 2 R^3 t^2 (8 - 113 t + 151 t^2 - 46 t^3) + 
 R^4 t (13 - 82 t + 72 t^2 + 148 t^3 - 136 t^4)) \\&+ 
 {p_2}^3 (3 t^5 + 15 R t^4 (-3 + 2 t) + 
 6 R^2 t^3 (21 - 29 t + 13 t^2) - 
 R^5 (-1 + 2 t)^3 (3 - 38 t + 38 t^2) \\&+ 
 2 R^3 t^2 (3 + 116 t - 231 t^2 + 97 t^3) + 
 R^4 t (-33 + 280 t - 678 t^2 + 554 t^3 - 108 t^4)) \\&- 
 4 p_2 (R (5 - 3 t) t^4 - t^5 + 3 R^6 (-1 + 2 t)^5 + 
 R^2 t^3 (-8 + 35 t - 22 t^2) \\&+ 
 R^5 (-1 + 2 t)^3 (1 - 17 t + 2 t^2) + 
 2 R^3 t^2 (2 - 33 t + 32 t^2 - 11 t^3) \\&+ 
 R^4 (t + 14 t^2 - 112 t^3 + 198 t^4 - 76 t^5)))\\
b_{11}=& 2 (-2 + p_2)^3 (p_2 - 2 R) R^2 (2 R^3 (1 - 2 t)^4 - 
 2 (1 + p_2 (-1 + t)) t^3 \\&- 3 (2 + 3 p_2 (-1 + t)) t (R - 2 R t)^2 + 
 R t^2 (6 + 11  (-1 + t) + 10 {p_2}^2 (-1 + t)^2 - 8 t + 8 t^2))\\
b_{02}=& (-2 + {p_2})^3 ({p_2} - 2 R) R^2 (-(-2 + {p_2}) t^3 + 2 R^3 (-1 + 2 t)^3 \\&+ 
 R t^2 (-6 + {p_2} (9 - 7 t) + 5 {p_2}^2 (-1 + t) + 4 t) - 
 R^2 t (-1 + 2 t) (6 - 4 t + {p_2} (-8 + 9 t)))
\end{align*}

\subsection{Coefficients of the imaginary action in equation \eqref{polJ}:}
\begin{align*}
%c_{10} =& -t + R (-1 + 2 t)\\
%c_{01} =& 2 R\\
c_{20} =& -3 R^2 (-1 + t)^2 t^2 (t + R (-1 + 2 t))\\
c_{11} =& 2 R^2 (-1 + t) t (R^2 (1 - 2 t)^2 + R t - 2 t^2)\\
c_{02} =&R^2 t (-t^2 + R t (-1 + 3 t) + R^2 (2 - 5 t + 2 t^2))\\
c_{30} =&-5 R^3 (-1 + t)^3 t^3 (R (3 - 5 t) t^2 - 2 t^3 + 
 3 R^2 t^2 (-1 + 2 t) + R^3 (-1 + 2 t)^3)\\
c_{21} =& 3 R^3 (-1 + t)^2 t^2 (R^4 (1 - 2 t)^4 + 6 R^3 (1 - 2 t)^2 t - 
 4 R t^3 + 6 t^4 - 3 R^2 t^2 (3 - 14 t + 14 t^2))\\
c_{12} =&3 R^3 (-1 + t) t^2 (R (3 - 10 t) t^3 + 3 t^4 + 
 R^4 (-3 + t) (-1 + 2 t)^3 - 3 R^2 t^2 (4 - 12 t + 7 t^2) \\&+ 
 3 R^3 t (1 + t - 11 t^2 + 10 t^3))\\
c_{03} =&R^3 t^2 (2 R (3 - 5 t) t^3 + t^4 - R^4 (1 - 2 t)^2 (-6 + 4 t + t^2) + 
 3 R^2 t^2 (-3 + 4 t + t^2) \\&+ 2 R^3 t (-2 + 18 t - 33 t^2 + 15 t^3))
\end{align*}

\subsection{Coefficients of the root $\ze_1$ of $P$ as a function of $l$ and $h$ in \eqref{polRoot1}:}
\begin{align*}
d_{100} =& R (-R (1 - 2 t)^2 + t)\\
d_{010} =& R (R - t - 2 R t)\\
d_{001} =& -2 R t\\
d_{300} =& (-1 + t) t^2 (3 R^2 (1 - 2 t)^2 t - 3 R (1 - 2 t)^2 t^2 + t^3 + 
 R^3 (1 - 2 t)^2 (-1 - 4 t + 4 t^2))\\
d_{210} =& t (-(-2 + t) t^4 + R t^3 (-5 + 19 t - 18 t^2) - 
 R^4 (-1 + 2 t)^3 (-1 - 4 t + 4 t^2) \\&+ 
 3 R^2 t^2 (1 + t - 12 t^2 + 12 t^3) + 
 R^3 t (1 - 23 t + 78 t^2 - 84 t^3 + 24 t^4))\\
d_{201} =& -2 (-1 + t) t^2 (2 R^3 (1 - 2 t)^2 - 3 R^2 (1 - 2 t)^2 t + t^3)\\
d_{120} =& t (R^5 (1 - 2 t)^4 - t^4 - R^4 (1 - 2 t)^2 (-2 - 9 t + 8 t^2) + 
 R^2 t^2 (3 - 25 t + 36 t^2) \\&- R^3 t (5 - 27 t + 24 t^2 + 12 t^3) + 
 R (t^3 - 4 t^4))\\
d_{111} =& -2 t (2 R (1 - 2 t) t^3 + t^4 - R^4 (-1 + 2 t)^3 - 
 6 R^2 t^2 (1 - 3 t + 2 t^2) + 2 R^3 (t - 8 t^3 + 8 t^4))\\
d_{030} =& R t (t^3 + R^4 (-1 + 2 t)^3 + R t^2 (-3 + 7 t) + 
 R^2 t (3 - 3 t - 10 t^2) \\&- R^3 (1 + 3 t - 12 t^2 + 4 t^3))\\
d_{021} =& 2 R t (2 R^2 (3 - 4 t) t^2 + R (-3 + t) t^2 + 2 t^3 + 
 R^3 (1 - 3 t + 4 t^3))\\
\end{align*}

\subsection{Coefficients of the root $\ze_2$ of $P$ as a function of $l$ and $h$ in \eqref{polRoot2}:}
\begin{align*}
e_{100} =& d_{100}=R (-R (1 - 2 t)^2 + t)\\
e_{010} =& d_{010}=R (R - t - 2 R t)\\
e_{001} =& -d_{001}=2 R t\\
e_{300} =& -d_{300}=(1 - t) t^2 (3 R^2 (1 - 2 t)^2 t - 3 R (1 - 2 t)^2 t^2 + t^3 + 
 R^3 (1 - 2 t)^2 (-1 - 4 t + 4 t^2))\\
e_{210} =& -d_{210}=t ((-2 + t) t^4 + R^4 (-1 + 2 t)^3 (-1 - 4 t + 4 t^2) + 
 R t^3 (5 - 19 t + 18 t^2) \\&- 3 R^2 t^2 (1 + t - 12 t^2 + 12 t^3) - 
 R^3 t (1 - 23 t + 78 t^2 - 84 t^3 + 24 t^4))\\
e_{201} =& d_{201}=-2 (-1 + t) t^2 (2 R^3 (1 - 2 t)^2 - 3 R^2 (1 - 2 t)^2 t + t^3)\\
e_{120} =& -d_{120}=-t (R^5 (1 - 2 t)^4 - t^4 - R^4 (1 - 2 t)^2 (-2 - 9 t + 8 t^2) + 
 R^2 t^2 (3 - 25 t + 36 t^2) \\&- R^3 t (5 - 27 t + 24 t^2 + 12 t^3)+ 
 R (t^3 - 4 t^4))\\
e_{111} =& d_{111}=-2 t (2 R (1 - 2 t) t^3 + t^4 - R^4 (-1 + 2 t)^3 - 
 6 R^2 t^2 (1 - 3 t + 2 t^2) \\&+ 2 R^3 (t - 8 t^3 + 8 t^4))\\
e_{030} =& -d_{030}=R t (R (3 - 7 t) t^2 - t^3 - R^4 (-1 + 2 t)^3 + 
 R^2 t (-3 + 3 t + 10 t^2) \\&+ R^3 (1 + 3 t - 12 t^2 + 4 t^3))\\
e_{021} =& d_{021}=2 R t (2 R^2 (3 - 4 t) t^2 + R (-3 + t) t^2 + 2 t^3 + 
 R^3 (1 - 3 t + 4 t^3))\\
\end{align*}

\subsection{Coefficients of the root $\ze_3$ of $P$ as a function of $l$ and $h$ in \eqref{polRoot3}:}
\begin{align*}
f_{10} =& -2 (-1 + t) t\\
f_{01} =& -t + R (-1 + 2 t)\\
f_{20} =& (-1 + t) t (2 R^3 (1 - 2 t)^2 - 3 R^2 (1 - 2 t)^2 t + t^3)\\
f_{11} =& 2 R (1 - 2 t) t^3 + t^4 - R^4 (-1 + 2 t)^3 - 
 6 R^2 t^2 (1 - 3 t + 2 t^2) + 2 R^3 (t - 8 t^3 + 8 t^4)\\
f_{02} =& -R (2 R^2 (3 - 4 t) t^2 + R (-3 + t) t^2 + 2 t^3 + 
 R^3 (1 - 3 t + 4 t^3))\\
f_{30} =& -2 (-1 + t)^2 t (-R^5 (1 - 2 t)^4 + 5 R^3 (1 - 2 t)^2 t^2 \\&+ 
 8 R^4 (1 - 2 t)^2 (-1 + t) t^2 - 5 R^2 (1 - 2 t)^2 t^3 + t^5)\\
f_{21} =& -(-1 + t) (6 R (1 - 2 t) t^5 + 3 t^6 + R^6 (-1 + 2 t)^5 - 
 2 R^5 t (-1 + 2 t)^3 (-4 - 3 t + 6 t^2) \\&- 
 5 R^2 t^4 (5 - 16 t + 12 t^2) + 
 10 R^3 t^3 (1 + t - 12 t^2 + 12 t^3) \\&+ 
 R^4 t^2 (15 - 130 t + 348 t^2 - 344 t^3 + 96 t^4))\\
f_{12} =& -t^6 + 4 R t^5 (-2 + 3 t) - R^6 (1 - 2 t)^4 (-3 + 2 t^2) + 
 R^2 t^4 (15 - 28 t + 8 t^2) \\&+ 
 2 R^5 (1 - 2 t)^2 t (3 + 12 t - 29 t^2 + 12 t^3) - 
 10 R^3 t^3 (-1 + 10 t - 21 t^2 + 12 t^3) \\&+ 
 R^4 t^2 (-25 + 140 t - 222 t^2 + 72 t^3 + 40 t^4) \\
f_{03} =& -2 R (-t^5 + 3 R t^5 + R^5 (-1 + 2 t)^3 (-1 - t + t^2) + 
 R^2 t^3 (5 - 18 t + 11 t^2) \\&+ 
 R^3 t^2 (-5 + 10 t + 11 t^2 - 18 t^3) + 
 R^4 t^2 (10 - 35 t + 32 t^2 - 4 t^3))\\
\end{align*}

\subsection{Coefficients of the reduced period in equation \eqref{PolT}:}
\begin{align*}
h_{10} =&-(-1 + R) (R^4 (1 - 2 t)^4 + t^4 \\&+ 4 R^3 (1 - 2 t)^2 t (-1 + 3 t) + 
 4 R t^3 (-1 + 3 t) + 2 R^2 t^2 (3 - 16 t + 4 t^2))\\
h_{01} =& \ra (t^3 + R^4 (-1 + 2 t)^3 + R t^2 (-3 + 13 t) + 
 R^2 t (3 + 3 t - 28 t^2) \\&- R^3 (1 + 15 t - 42 t^2 + 16 t^3))\\
h_{20} =& -\ra (R^8 (1 - 2 t)^6 + t^6 - 2 R^7 (1 - 2 t)^4 t (3 + 2 t) - 
 2 R t^5 (3 + 2 t) \\&+ R^2 t^4 (15 + 4 t - 49 t^2) + 
 2 R^3 t^3 (-10 + 12 t + 39 t^2 + 56 t^3) \\&+ 
 R^4 t^2 (15 - 56 t + 113 t^2 - 212 t^3 - 170 t^4) \\&+ 
 R^6 (1 - 2 t)^2 (1 - 8 t + 95 t^2 - 164 t^3 + 46 t^4) \\&+ 
 2 R^5 t (-3 + 22 t - 134 t^2 + 490 t^3 - 698 t^4 + 420 t^5)) \\
h_{11} =&- 2 (-1 + R) (t^7 + R^8 (-1 + 2 t)^7 - R t^6 (7 + 3 t) + 
 R^2 t^5 (21 - 5 t - 56 t^2) \\&+ R^7 (-1 + 2 t)^5 (1 - 15 t + 4 t^2) +
 R^6 t (-1 + 2 t)^3 (-7 + 65 t - 322 t^2 + 224 t^3) \\&+ 
 R^3 t^4 (-35 + 71 t - 130 t^2 + 328 t^3) + 
 R^4 t^3 (35 - 175 t + 878 t^2 - 1308 t^3 + 200 t^4) \\&+ 
 3 R^5 t^2 (-7 + 65 t - 436 t^2 + 1112 t^3 - 1120 t^4 + 464 t^5))\\
h_{02} =&\ra (R^8 (1 - 2 t)^6 + t^6 + 2 R^7 (1 - 2 t)^4 t (-7 + 2 t) - 
 2 R t^5 (3 + 2 t) + R^2 t^4 (15 - 4 t - 57 t^2) \\&+ 
 2 R^3 t^3 (-10 + 28 t - 237 t^2 + 348 t^3) + 
 R^4 t^2 (15 - 104 t + 833 t^2 - 1012 t^3 - 138 t^4) \\&+ 
 R^6 (1 - 2 t)^2 (1 - 16 t - 329 t^2 + 188 t^3 + 110 t^4) \\&- 
 2 R^5 t (3 - 38 t + 14 t^2 + 886 t^3 - 1878 t^4 + 884 t^5))\\
h_{L10} =& (-1 + R) \ra t\\
h_{L01} =& t^2 + R (t - 3 t^2) + R^2 (-2 + 5 t - 2 t^2)\\
h_{L20} =& -t^6 + 2 R t^5 (2 + t) - R^6 (-1 + 2 t)^5 - 
 R^2 t^4 (5 + 2 t + 8 t^2) + R^3 t^4 (13 - 18 t + 25 t^2) \\&+ 
 R^5 (1 - 2 t)^2 t (-4 + 19 t - 26 t^2 + 17 t^3) + 
 R^4 t^2 (5 - 38 t + 68 t^2 - 34 t^3 - 16 t^4)\\
h_{L11} =& -2 (-1 + R) \ra t (-t^4 + 2 R t^3 (-1 + 3 t) + 
 R^4 (-1 + 2 t)^3 (-5 + 4 t) \\&+ 2 R^2 t^2 (6 - 11 t + 2 t^2) + 
 2 R^3 t (-7 + 25 t - 28 t^2 + 12 t^3))\\
h_{L02} =& -\ra^2 (4 R (2 - 3 t) t^3 + t^4 + 2 R^2 t^2 (-6 + 8 t + t^2) - 
 R^4 (1 - 2 t)^2 (-7 + 4 t + 2 t^2) 	\\&+ 
 R^3 t (-4 + 43 t - 82 t^2 + 39 t^3))\\
\end{align*}

\subsection{Coefficients of $\rg$ in equation \eqref{polRBljser} in the proof of Lemma \ref{lemT}:}
\begin{align*}
u_{20} =& -R (-3 + t) t - t^2 - R^2 (2 - 7 t + 6 t^2)\\
u_{11} =& 2 (-1 + R) \ra t \\
u_{02} =& t^2 + R (t - 3 t^2) + R^2 (-2 + 5 t - 2 t^2) \\
u_{60} =& (R (-3 + t) t + t^2 + R^2 (2 - 7 t + 6 t^2))^2\\
u_{51} =& 4 \ra t^2 (R^3 (1 - 2 t) + t + 3 R^2 t - R (1 + 2 t))\\
u_{42} =& -6 t^2 (R^3 (6 - 10 t) + R^4 (1 - 2 t)^2 + t^2 - 2 R t (1 + t) + 
 R^2 (1 + 5 t^2))\\
u_{33} =& 4 (-1 + R) \ra t (t^2 + R t (-7 + 5 t) + R^2 (6 - 19 t + 14 t^2))\\
u_{24} =& t^4 + 2 R t^3 (-7 + 5 t) + R^4 (1 - 2 t)^2 (-12 + 12 t + t^2) + 
 R^2 t^2 (13 - 12 t + 5 t^2) \\&- 2 R^3 t (-6 + 69 t - 127 t^2 + 66 t^3)\\
u_{15} =& 24 (-1 + R) R \ra (-1 + t) t (t + R (-1 + 2 t))\\
u_{06} =& 8 R (-1 + t) (R^2 (7 - 9 t) t^2 + R (-3 + t) t^2 + 2 t^3 + 
 R^3 (1 - 3 t + 4 t^3))\\
\end{align*}

\subsection{Coefficients of the root $\ze_1$ of $P$ as a function of $l$ and $j$ in \eqref{pol2Root1}:}
\begin{align*}
\al_{100} =&\ra\\ 
\al_{010} =& R - t - 2 R t\\
\al_{001} =& -2 \sqrt{R} t\\
\al_{300} =& 0\\
\al_{210} =& t (t^3 + R^4 (-1 + 2 t)^3 + R t^2 (-3 + 5 t) + 
 R^2 t (3 - 5 t - 4 t^2) + R^3 (-1 + t + 2 t^2))\\
\al_{120} =& (-1 + R) \ra t (t + R (-1 + 2 t))^2\\
\al_{030} =& 0\\
\al_{201} =& \sqrt{R} t^2 (R^2 (1 - 2 t) + R^3 (1 - 2 t)^2 + R (-2 + t) t + t^2)\\
\al_{111} =& 2 (-1 + R) \sqrt{R} \ra t^2 (t + R (-1 + 2 t))\\
\al_{021} =& \sqrt{R} t^2 (3 R^3 (1 - 2 t)^2 + 3 R (-2 + t) t + 3 t^2 + 
 R^2 (3 + 2 t - 8 t^2))\\
\end{align*}

\subsection{Coefficients of the root $\ze_2$ of $P$ as a function of $l$ and $j$ in \eqref{pol2Root2}:}
\begin{align*}
\be_{100} =& \al_{100} =\ra\\
\be_{010} =& \al_{010} =R - t - 2 R t\\
\be_{001} =& -\al_{001} = 2 \sqrt{R} t\\
\be_{300} =& -\al_{300} =0\\
\be_{210} =& -\al_{210} =t (R (3 - 5 t) t^2 - t^3 - R^4 (-1 + 2 t)^3 - R^3 (-1 + t + 2 t^2) \\&+ 
 R^2 t (-3 + 5 t + 4 t^2))\\
\be_{120} =& -\al_{120} =-(-1 + R) \ra t (t + R (-1 + 2 t))^2\\
\be_{030} =& -\al_{030} =0\\
\be_{201} =&\al_{201} = \sqrt{R} t^2 (R^2 (1 - 2 t) + R^3 (1 - 2 t)^2 + R (-2 + t) t + t^2)\\
\be_{111} =&\al_{111} = 2 (-1 + R) \sqrt{R} \ra t^2 (t + R (-1 + 2 t))\\
\be_{021} =& \al_{021}=\sqrt{R} t^2 (3 R^3 (1 - 2 t)^2 + 3 R (-2 + t) t + 3 t^2 + 
 R^2 (3 + 2 t - 8 t^2))\\
\end{align*}

\subsection{Coefficients of the root $\ze_3$ of $P$ as a function of $l$ and $j$ in \eqref{pol2Root3}:}
\begin{align*}
\ga_{10} =&\ra (-R + t)\\
\ga_{01} =&R^2 - 2 R^2 t + (-1 + 2 R) t^2 \\
\ga_{20} =&t (R^3 (1 - 2 t)^2 (-5 + t) t + t^4 - R t^3 (3 + t) + 
 R^4 (1 - 2 t)^2 (2 - 5 t + 4 t^2) \\&+ R^2 t^2 (5 - 7 t + 8 t^2))\\
\ga_{11} =&-2 (-1 + R) \ra t^2 (2 R (-2 + t) t + t^2 + R^2 (3 - 10 t + 8 t^2))\\
\ga_{02} =&t (R (7 - 3 t) t^3 - t^4 + R^2 t^2 (-9 + 11 t - 8 t^2) + 
 R^3 t (1 + 15 t - 40 t^2 + 28 t^3) \\&+ 
 R^4 (2 - 7 t + 20 t^3 - 16 t^4))\\
\ga_{30} =&(-1 + R) t (-t^5 + R^5 (1 - 2 t)^4 (-3 + 2 t) + R t^4 (1 + 2 t) + 
 2 R^2 t^3 (3 - 8 t + 4 t^2)\\& + 
 R^4 (1 - 2 t)^2 t (11 - 20 t + 12 t^2) + 
 2 R^3 t^2 (-7 + 26 t - 28 t^2 + 8 t^3))\\
\ga_{21} =&R \ra (-1 + t) (-t^4 - 3 R t^4 + R^4 (-3 + 2 t) (-1 + 2 t)^3 + 
 2 R^2 t^2 (3 - 22 t + 23 t^2)\\& + 
 R^3 t (-8 + 67 t - 142 t^2 + 80 t^3))\\
\ga_{12} =&(-1 + R) t (-3 t^5 + R^5 (1 - 2 t)^4 (-13 + 10 t) + 
 R t^4 (-1 + 10 t) + 2 R^2 t^3 (17 - 48 t + 28 t^2)\\& + 
 R^4 (1 - 2 t)^2 t (49 - 108 t + 68 t^2) + 
 2 R^3 t^2 (-33 + 138 t - 148 t^2 + 40 t^3))\\
\ga_{03} =&-\ra (-2 t^5 + R t^4 (-5 + 11 t) + R^5 (-1 + 2 t)^3 (-3 - 5 t + 6 t^2) +
 R^2 t^3 (24 - 75 t + 47 t^2)\\& + 
 2 R^3 t^2 (-11 + 21 t + 23 t^2 - 35 t^3) + 
 R^4 t (2 + 35 t - 141 t^2 + 142 t^3 - 32 t^4))\\
\end{align*}

\subsection{Coefficients of the elliptic modulus $k$ in equation \eqref{polKjl} in the proof of Lemma \ref{lemT}:}
\begin{align*}
\delta_{300} =&0 \\
\delta_{210} =& \ra (R (5 - 13 t) t^4 - t^5 - R^6 (-1 + 2 t)^5 + 
 R^5 (-1 + 2 t)^3 (-1 - 15 t + 8 t^2) \\&+ 
 R^2 t^3 (-10 + 27 t + 24 t^2) - 
 2 R^3 t^2 (-5 + 2 t + 17 t^2 + 18 t^3) \\&+ 
 R^4 t (-5 - 20 t + 106 t^2 - 144 t^3 + 104 t^4))\\
\delta_{120} =& (-1 + R) \ra^4 (R^2 (1 - 2 t)^2 + t^2 + 2 R t (-1 + 6 t))\\
\delta_{030} =&0 \\
\delta_{201} =&16 R^{5/2} \ra^3 (-1 + t) t^2 \\
\delta_{111} =&0 \\
\delta_{021} =&-16 R^{5/2} \ra^3 (-1 + t) t^2 \\
\end{align*}

\subsection{Coefficients of the rotation number in equation \eqref{PolW}:}
\begin{align*}
v_{10} =& t^4 (1 + 2 t) + R t^3 (-4 + 13 t - 21 t^2) - 
 R^4 (-1 + 2 t)^3 (1 - 9 t + 5 t^2) \\&+ 
 R^2 t^2 (6 - 47 t + 61 t^2 - 2 t^3) + 
 R^3 t (-4 + 47 t - 143 t^2 + 172 t^3 - 84 t^4)\\
v_{01} =& \ra (t^3 + R t^2 (-3 + 25 t - 25 t^2) + 
 R^3 (1 - 2 t)^2 (-1 - 9 t + 9 t^2) + R^2 t (3 - 20 t + 20 t^2))\\
v_{L10} =& \ra (2 t + R (-1 + 2 t))\\
v_{L01} =& R^2 (1 - 2 t)^2 + R t - 2 t^2\\
\end{align*}

\subsection{Coefficients of the partial derivative of the invariant $S$ with respect to $l$ in equation \eqref{poldsdl}:}
\begin{align*}
\mu_{10} =& (t + R (-1 + 2 t)) (t^3 + R^4 (-1 + 2 t)^3 + R t^2 (-3 + 5 t) + 
 3 R^2 t (1 - 7 t + 4 t^2) \\&+ R^3 (-1 + 17 t - 46 t^2 + 32 t^3))\\
\mu_{01} =& (-1 + R) \ra (t^3 + R^3 (-1 + 2 t)^3 + R t^2 (-3 + 14 t) + 
 R^2 t (3 - 20 t + 28 t^2))\\
\end{align*}

\subsection{Coefficients of the partial derivative of the invariant $S$ with respect to $j$ in equation \eqref{poldsdj}:}
\begin{align*}
\ka_{10} =& (-1 + R) \ra (R^2 (1 - 2 t)^2 + t^2 + 2 R t (-1 + 6 t))\\
\ka_{01} =& t^3 + R^4 (-1 + 2 t)^3 + R t^2 (-3 + 13 t) + 
 R^2 t (3 + 3 t - 28 t^2) - R^3 (1 + 15 t - 42 t^2 + 16 t^3)\\
\ka_{20} =& -R^8 (1 - 2 t)^6 + 6 R^7 (1 - 2 t)^4 t + 6 R t^5 - t^6 + 
 3 R^2 t^4 (-5 + 4 t + 3 t^2) \\&+ 
 2 R^3 t^3 (10 - 24 t + 9 t^2 - 26 t^3) - 
 3 R^4 t^2 (5 - 24 t + 59 t^2 - 76 t^3 + 2 t^4) \\&- 
 R^6 (1 - 2 t)^2 (1 - 8 t + 87 t^2 - 148 t^3 + 62 t^4) \\&+ 
 2 R^5 t (3 - 24 t + 134 t^2 - 438 t^3 + 570 t^4 - 276 t^5)\\
\ka_{11} =& 2 (-1 + R) \ra (R (-5 + t) t^4 + t^5 + R^6 (-1 + 2 t)^5 + 
 R^2 t^3 (10 - 15 t - 16 t^2) \\&+ 
 R^5 (-1 + 2 t)^3 (1 - 9 t + 4 t^2) + 
 2 R^3 t^2 (-5 + 20 t - 99 t^2 + 108 t^3) \\&+ 
 R^4 t (5 - 40 t + 274 t^2 - 596 t^3 + 336 t^4))\\
\ka_{02} =& R^8 (1 - 2 t)^6 - 6 R^7 (1 - 2 t)^4 t - 6 R t^5 + t^6 + 
 R^2 t^4 (15 - 12 t - 17 t^2) \\&+ 
 2 R^3 t^3 (-10 + 24 t - 177 t^2 + 210 t^3) + 
 3 R^4 t^2 (5 - 24 t + 195 t^2 - 236 t^3 + 10 t^4) \\&+ 
 R^6 (1 - 2 t)^2 (1 - 8 t - 201 t^2 + 76 t^3 + 118 t^4) \\&- 
 2 R^5 t (3 - 24 t + 22 t^2 + 570 t^3 - 1278 t^4 + 660 t^5) \\
\end{align*}

\bibliographystyle{alpha}
\bibliography{PhD}

\vspace{0.5cm}

\textbf{Jaume Alonso}\\
Department of Mathematics and Computer Science\\
University of Antwerp\\
Middelheimlaan 1\\
2020 Antwerp, Belgium\\
\textit{E-mail:} \texttt{jaume.alonsofernandez@uantwerpen.be}

\vspace{0.5cm}
\textbf{Holger R. Dullin}\\
School of Mathematics and Statistics\\
University of Sydney\\
Camperdown Campus\\
Sydney, NSW 2006, Australia \\
\textit{E-mail:} \texttt{holger.dullin@sydney.edu.au}

\vspace{0.5cm}
\textbf{Sonja Hohloch}\\
Department of Mathematics and Computer Science\\
University of Antwerp\\
Middelheimlaan 1\\
2020 Antwerp, Belgium\\
\textit{E-mail:} \texttt{sonja.hohloch@uantwerpen.be}

\end{document}